\documentclass[reqno,11pt]{amsart}

\usepackage{amsmath,amssymb,amsthm,mathrsfs}
\usepackage{epsfig,color}
\usepackage[latin1]{inputenc}

\usepackage[numbers]{natbib}

%\usepackage[notref]{showkeys}
%\usepackage{hyperref}

%%DIMENSIONI della pagina
\voffset=-1.5cm \textheight=23cm \hoffset=-.5cm \textwidth=16cm
\oddsidemargin=1cm \evensidemargin=-.1cm
\footskip=35pt
%\linespread{1.10}
\parindent=20pt

%\sloppy \allowdisplaybreaks
\numberwithin{equation}{section}

\def\H{\mathcal H}

\def\Le{\mathcal L}
\def\R{\mathbb R}
\def\N{\mathbb N}

\newcommand{\dist}{\mathop{\mathrm{dist}}}
\def\e{\varepsilon}
\def\s{\sigma}
\def\S{\Sigma}
\def\vphi{\varphi}
\def\Div{{\rm div}\,}
\def\om{\omega}
\def\l{\lambda}
\def\g{\gamma}
\def\k{\kappa}
\def\Om{\Omega}
\def\de{\delta}
\def\Id{{\rm Id}}

\def\spt{{\rm spt}}

\def\pa{\partial}
\def\trace{{\rm tr}}

\def\00{{\bf 0}}

 %Delimitatori correnti

\def\F{\mathcal{F}}

\renewcommand{\a}{\alpha}
\renewcommand{\b}{\beta}

\newcommand{\hd}{\mathrm{hd}}
\newcommand{\D}{\Delta}
\renewcommand{\l}{\lambda}

\renewcommand{\om}{\omega}

\newcommand{\tr}{\mbox{tr }}

\renewcommand{\Div}{{\rm div \,}}
\newcommand{\ov}{\overline}

\newcommand{\diam}{\mathrm{diam}}
\newcommand{\cc}{\subset\subset}

\def\weak{\stackrel{*}{\rightharpoonup}}

\def\p{\mathbf{p}}

\def\C{\mathbf{C}}
\def\D{\mathbf{D}}

\newtheorem*{theorem*}{Theorem}
\newtheorem{theorem}{Theorem}[section]
\newtheorem{lemma}[theorem]{Lemma}
\newtheorem{proposition}[theorem]{Proposition}

\newtheorem{remark}[theorem]{Remark}

\setcounter{tocdepth}{1}
\title[Hypersurfaces with almost constant mean curvature]{On the shape of compact hypersurfaces with almost constant mean curvature}

\author{G. Ciraolo}
\address{Dipartimento di Matematica e Informatica,
Università di Palermo, Via Archirafi 34, 90123 Palermo, Italy}
\email{giulio.ciraolo@unipa.it}

\author{F. Maggi}
\address{Department of Mathematics, University of Texas at Austin, Austin, TX, USA}
\email{maggi@math.utexas.edu}

\begin{document}

\begin{abstract} The distance of an almost constant mean curvature boundary from a finite family of disjoint tangent balls with equal radii  is quantitatively controlled in terms of the oscillation of the scalar mean curvature. This result allows one to quantitatively describe the geometry of volume-constrained stationary sets in capillarity problems.
\end{abstract}

\maketitle

\section{Introduction}
We investigate the geometry of compact boundaries with almost constant mean curvature in $\R^{n+1}$, $n\ge 2$. Beyond its intrinsic geometric interest, this problem is motivated by the description of equilibrium shapes (volume-constrained stationary sets) of the classical Gauss free energy used in capillarity theory \cite{Finn}, and consisting of a dominating surface tension energy plus a potential energy term. Our analysis leads to new stability estimates describing in a quantitative way the distance of these shapes from compounds of tangent balls of equal radii.

\subsection{Main result} Given a connected bounded open set $\Om\subset\R^{n+1}$ ($n\ge 2$) with $C^2$-boundary, we denote by $H$ the scalar mean curvature of $\pa\Om$ with respect to the outer unit normal $\nu_\Om$ to $\Om$ (normalized so that $H=n$ if $\Om=B=\{x\in\R^{n+1}:|x|<1\}$), and we introduce the {\it Alexandrov's deficit} of $\Om$,
\begin{equation}
  \label{alexandrov deficit}
  \de(\Om)=\frac{\|H-H_0\|_{C^0(\pa\Om)}}{H_0}\,,\qquad\mbox{where}\qquad H_0=\frac{n\, P(\Om)}{(n+1)|\Om|}\,.
\end{equation}
This is a scale invariant quantity (i.e. $\de(\Om)=\de(\l\Om)$ for every $\l>0$) with the property that $\de(\Om)=0$ if and only if $\Om$ is a ball (Alexandrov's theorem). The motivation for the particular value of $H_0$ considered in the definition of $\de(\Om)$ is that if $H$ is constant on $\pa\Om$, then by the divergence theorem it must be $H=H_0$ (see \eqref{computation} below). Here and in the following, $\H^k$ stands for the $k$-dimensional Hausdorff measure on $\R^{n+1}$, $|\Om|$ is the Lebesgue measure (volume) of $\Om$, and $P(E)$ is the distributional perimeter of a set of finite perimeter $E\subset\R^{n+1}$ (so that $P(E)=\H^n(\pa E)$ whenever $E$ is an open set with Lipschitz boundary).

Motivated by applications to geometric variational problems (see section \ref{section capillarity intro}) we want to describe the shape of sets $\Om$ with small Alexandrov's deficit. This is a classical question in convex geometry, where the size of $\de(\Om)$ for $\Om$ convex has been related to the Hausdorff distance of $\pa\Om$ from a single sphere in various works, see \cite{schneider,arnold,kohlmann}. However one should keep in mind that, as soon as convexity is dropped off, the smallness of $\de(\Om)$ does not necessarily imply proximity to a single ball. Indeed, by slightly perturbing a given number of spheres of equal radii connected by short catenoidal necks, one can construct open sets $\{\Om_h\}_{h\in\N}$ with the property that, as $h\to\infty$, $\de(\Om_h)\to 0$, while the necks contract to points and the sets $\Om_h$ converge to an array of tangent balls, see \cite{butscher,butschermazzeo} (and, more generally for this kind of construction, the seminal papers \cite{kapouleas1990,kapouleas1991}). At the same time, if $\de(\Om)$ is small enough in terms of $n$ and the largest principal curvature of $\pa\Om$, then $\Om$ must be close to a single ball. More precisely, denoting by $A$ the second fundamental form of $\pa\Om$, in \cite{ciraolovezzoni} it is proved the existence of $c(n,\|A\|_{C^0(\pa\Om)})>0$ such that if $\de(\Om)\le c(n,\|A\|_{C^0(\pa\Om)})$, then the in-radius and out-radius of $\Om$ must satisfy
\begin{equation}
  \label{cv}
  \frac{r^{{\rm out}}(\Om)}{r^{{\rm in}}(\Om)}-1\le C(n,\|A\|_{C^0(\pa\Om)})\,\de(\Om)\,,
\end{equation}
where the linear control in terms of $\de(\Om)$ is sharp, as shown for example by taking a sequence of almost-round ellipsoids. In light of the examples from \cite{butscher}, an assumption like $\de(\Om)\le c(n,\|A\|_{C^0(\pa\Om)})$ is necessary in order to expect $\Om$ to be close to a single ball.

Our goal here is to address the situation when a different kind of smallness assumption on $\de(\Om)$ is considered. Indeed, we are just going to assume that $\de(\Om)$ is small with respect to the scale invariant quantity
\[
Q(\Om)=\frac{P(\Om)^{n+1}}{(n+1)^{n+1}|\Om|^n\,|B|}=\Big(\frac{P(\Om)}{P(B)}\Big)^{n+1}\,\Big(\frac{|B|}{|\Om|}\Big)^n\,.
\]
Notice that by the Euclidean isoperimetric inequality
\begin{equation}
  \label{euclidean iso}
  P(\Om)\ge (n+1)\,|B|^{1/(n+1)}\,|\Om|^{n/(n+1)}=P(B)\,\Big(\frac{|\Om|}{|B|}\Big)^{n/(n+1)}\,,
\end{equation}
one always has $Q(\Om)\ge 1$, and that
\[
Q\big(\mbox{a union of $L$ disjoint balls of equal radii}\big)=L\,,\qquad\forall L\in\N\,,L\ge 1\,.
\]
Hence, one may expect the integer part of $Q(\Om)$ to indicate the number of balls of radius $n/H_0$ that should be approximating $\Om$: and indeed, given $L\in\N$, $L\ge 1$, and $a\in[0,1)$, in Theorem \ref{thm main 1} we are going to prove that if $Q(\Om)\le L+1-a$ (so that the normalized perimeter of $\Om$ is a tad less than the normalized perimeter of $(L+1)$-many balls) and $\de(\Om)\le\de(n,L,a)$, then $\Om$ is close (in the various ways specified below, and quantitatively in terms of powers of $\de(\Om)$) to a compound of at most $L$-many mutually tangent balls of radius $n/H_0$.

Before stating Theorem \ref{thm main 1} it seems convenient to rescale $\Om$ in such a way that the reference balls have unit radius, that is, we rescale $\Om$ (as we can always do) in such a way that
\[
H_0=n\qquad\mbox{and thus}\qquad P(\Om)=(n+1)|\Om|\,,\qquad Q(\Om)=\frac{|\Om|}{|B|}=\frac{P(\Om)}{P(B)}\,.
\]
Here
\begin{figure}
  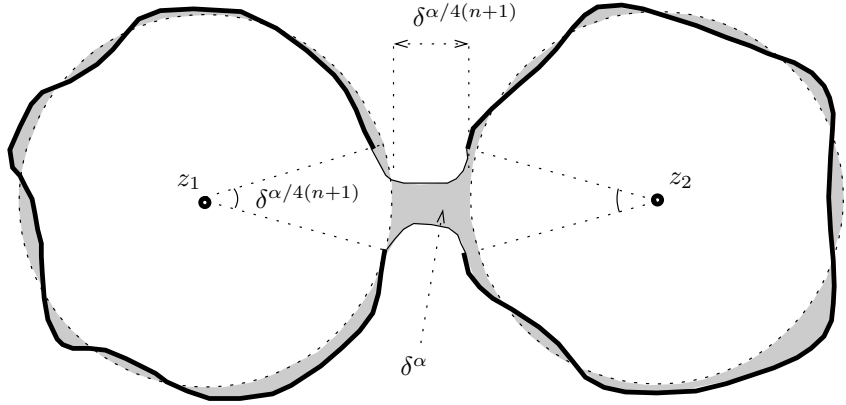\caption{{\small The situation in Theorem \ref{thm main 1}, with $\de=\de(\Om)$ and $\a=1/2(n+2)$. The grey region depicts $\Om\Delta G$ (whose area is of order $\de^\a$), while $(\Id+\psi_G\nu_G)(\Sigma)$ is depicted by a bold line. The spheres $\pa B_{z_j,1}$ are at a distance of order $\de^{\a/4(n+1)}$, while $\S$ is obtained from $\pa G$ by removing two spherical caps of diameter $\de^{\a/4(n+1)}$.
  %(Since the reference spheres have unit radii, these diameters are proportional to the angles defining the spherical caps.)
  }}\label{fig alex}
\end{figure}
and in the following we also set $B_{x,r}=\{y\in\R^{n+1}:|y-x|<r\}$ (so that $B=B_{0,1}$) and, given two compact sets $K_1, K_2$ in $\R^{n+1}$, we define their Hausdorff distance as
\[
\hd(K_1,K_2)=\max\big\{\max_{x\in K_1}\dist(x,K_2),\max_{x\in K_2}\dist(x,K_1)\big\}\,.
\]
Moreover, we let
  \begin{equation}
    \label{alpha}
     \a=\frac1{2(n+2)}\,,
  \end{equation}
and we refer readers to the beginning of section \ref{section proof of the main theorem} for our conventions about constants.
%(Note that by the Euclidean isoperimetric inequality if $H_0=n$, then $|\Om|\ge |B|$.) In the second part of Theorem \ref{thm main 1} we prove indeed that if $\de(\Om)$ is suitably small (in terms of the dimension $n$ only), then $\Om$ is close in volume to the union $G$ of {\it finitely many} disjoint balls of almost unit radii, all contained in $\Om$. Moreover, $\pa G$ lie at small distance from $\pa\Om$, and if a uniform upper volume density bound is imposed on $\Om$, then $\Om$ is close to a single ball. Finally, in part three of Theorem \ref{thm main 1} we assume that $\de(\Om)$ is small enough in terms of the volume of $\Om$. Under such a condition we can prove that the perimeter of $\Om$ is close to the perimeter of $G$, that $\pa\Om$ is close to $\pa G$ in Hausdorff distance, and that a very large portion of $\pa\Om$ is actually a $C^1$-small deformation of a large portion of $\pa G$, leaving out just a portion of space of small diameter for $\pa\Om$ to develop necks connecting is almost-spherical parts. All these estimates are quantitative in $\de(\Om)$, although the decay rates are very likely non-sharp.

\begin{theorem}\label{thm main 1}
  Given $n,L\in\N$ with $n\ge 2$ and $L\ge 1$, and $a\in(0,1]$, there exists a positive constant $c(n,L,a)>0$ with the following property. If $\Om$ is a bounded connected open set with $C^2$-boundary in $\R^{n+1}$ such that $H>0$ and
  \[
  H_0=n\,,\qquad P(\Om)\le (L+1-a)\,P(B)\,,\qquad \de(\Om)\le c(n,L,a)\,,
  \]
  then there exists a finite family $\{B_{z_j,1}\}_{j\in J}$ of mutually disjoint balls with $\#\,J\le L$ such that if we set
  \[
  G=\bigcup_{j\in J}B_{z_j,1}\,,
  \]
  then
  \begin{eqnarray}
  \label{main thm Omega meno G}
  \frac{|\Om\Delta G|}{|\Om|}&\le&  C(n)\,L^2\,\de(\Om)^\a\,,
  \\
  \label{main thm perimetri stima}
    \frac{|P(\Om)-\#\,J\,P(B)|}{P(\Om)}\,&\le& C(n)\,L^2\,\de(\Om)^\a\,,
  \\
  \label{main thm onesided hd}
  \frac{\max_{x\in \pa G}\dist(x,\pa\Om)}{\diam(\Om)}&\le& C(n)\,L\,\de(\Om)^\a\,,%\qquad\forall x\in \bigcup_{j\in J}\,\pa B_{x_j,s_j}\,.
  \\\label{main thm hd stima}
    \frac{\hd(\pa\Om,\pa G)}{\diam(\Om)}&\le& C(n)\,L^{3/n}\,\de(\Om)^{\a/4n^2(n+1)}\,.
  \end{eqnarray}
  Moreover, there exists an open subset $\S$ of $\pa G$ and a function $\psi:\S\to\R$ with the following properties. The set $\pa G\setminus\S$ consists of at most $C(n)\,L$-many spherical caps whose diameters are bounded by $C(n)\,\de(\Om)^{\a/4(n+1)}$. The function $\psi$ is such that $(\Id+\psi\,\nu_G)(\S)\subset\pa\Om$ and
  \begin{gather}\label{stima psi intro}
  \|\psi\|_{C^{1,\g}(\S)}\le C(n,\g)\,,\qquad\forall\g\in(0,1)\,,
  \\
  \label{stima psi intro 2}
  \frac{\|\psi\|_{C^0(\S)}}{\diam(\Om)}\le C(n)\,L\,\de(\Om)^\a\,,\qquad \|\nabla\psi\|_{C^0(\S)}\le C(n)\,L^{2/n}\,\de(\Om)^{\a/8n(n+1)}\,,
  \\\label{stima bordo omega meno immagine di sigma}
  \frac{\H^n(\pa\Om\setminus(\Id+\psi\,\nu_G)(\S))}{P(\Om)}\le C(n)\,L^{4/n}\,\de(\Om)^{\a/4n(n+1)}\,,
  \end{gather}
  where $(\Id+\psi\,\nu_G)(x)=x+\psi(x)\,\nu_G(x)$ and $\nu_G$ is the outer unit normal to $G$. Finally:
  \begin{enumerate}
    \item[(i)] if $\#\,J\ge 2$, then for each $j\in J$ there exists $\ell\in J$, $\ell\ne j$, such that
  \begin{equation}
      \label{sjsjprimo tangenti}
    \frac{\dist(\pa B_{z_j,1},\pa B_{z_\ell,1})}{\diam(\Om)}\le C(n)\,\de(\Om)^{\a/4(n+1)}\,,
  \end{equation}
  that is to say, each ball in $\{B_{z_j,1}\}_{j\in J}$ is close to be tangent to another ball from the family;

  \item[(ii)] if there exists $\k\in(0,1)$ such that
  \begin{equation}
    \label{stima densita fuori intro}
    |B_{x,r}\setminus\Om|\ge \k\,|B|\,r^{n+1}\,,\qquad\forall x\in\pa\Om\,,r<\k\,,
  \end{equation}
  and $\de(\Om)\le c(n,L,\k)$, then $\#\,J=1$, that is, $\Om$ is close to a single ball.
  \end{enumerate}
\end{theorem}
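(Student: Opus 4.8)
The plan is to combine a quantitative version of the Heintze--Karcher/Montiel--Ros proof of Alexandrov's theorem with a covering argument, elliptic regularity, and the topological constraint coming from connectedness of $\Om$.

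\emph{Step 1: a Heintze--Karcher deficit estimate.} Since $\de(\Om)=\|H-n\|_{C^0(\pa\Om)}/n$, on $\pa\Om$ one has the pointwise bounds $n(1-\de(\Om))\le H\le n(1+\de(\Om))$. Running the Montiel--Ros argument through the inward normal map $\zeta(x,t)=x-t\,\nu_\Om(x)$, whose tangential Jacobian at $x$ is $\prod_{i=1}^n(1-t\,\kappa_i(x))$ (with $\kappa_1,\dots,\kappa_n$ the principal curvatures of $\pa\Om$), and using the arithmetic--geometric mean inequality $\prod_i(1-t\,\kappa_i)\le(1-tH/n)^n$ valid while all factors are nonnegative, one gets $(n+1)|\Om|\le\int_{\pa\Om}(n/H)\,d\H^n\le P(\Om)/(1-\de(\Om))$. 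Because of the normalization $P(\Om)=(n+1)|\Om|$, this forces the Heintze--Karcher deficit $\int_{\pa\Om}(n/H)\,d\H^n-(n+1)|\Om|$ to be at most $C(n)\,\de(\Om)\,P(\Om)$. Decomposing the deficit along the normal map, it controls from above both (a) the integral $\int_{\pa\Om}\int_{c(x)}^{n/H}(1-tH/n)^n\,dt\,d\H^n$ measuring how soon the inward normal segment of length $\approx n/H(x)\approx1$ hits the cut locus (here $c(x)$ is the cut time at $x$), and (b) the integrated arithmetic--geometric mean gap $\int_{\pa\Om}\int_0^{c(x)}\big[(1-tH/n)^n-\prod_i(1-t\kappa_i)\big]\,dt\,d\H^n$.

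\emph{Step 2: from the deficit to an almost covering by unit balls.} Smallness of term (a) produces, via Chebyshev's inequality, a ``good set'' $\Gamma\subset\pa\Om$ with $\H^n(\pa\Om\setminus\Gamma)\le C(n)\,\de(\Om)^{\theta}P(\Om)$ (for a dimensional $\theta>0$) such that a ball of radius $\approx1$ lies inside $\Om$ tangent to $\pa\Om$ at each point of $\Gamma$; smallness of term (b) forces $\kappa_i(x)\approx1$ for all $i$ at most points of $\Gamma$, so that the shape operator $A\approx\Id$ there and the centre map $x\mapsto x-\nu_\Om(x)$ (whose tangential differential is $\Id-A$) is nearly locally constant on $\Gamma$. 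I would then cluster these centres around finitely many well-separated points $z_1,\dots,z_{\#J}$, set $G=\bigcup_{j\in J}B_{z_j,1}$, and prove that $G$ fills $\Om$ up to an error $|\Om\Delta G|\le C(n)L^2\,\de(\Om)^\a$, which is \eqref{main thm Omega meno G}, the exponent $\a$ emerging from optimizing the measure-theoretic estimates. The bound $\#J\le L$ then follows because, choosing the centres $(2-\e)$-separated for a small $\e=\e(n,a)$, the balls $B_{z_j,1-\e}$ are disjoint and contained in $\Om$, so $\#J\,(1-\e)^{n+1}|B|\le|\Om|\le(L+1-a)|B|$, whence $\#J\le L$ once $\e$ is small in terms of $a$ (this is where the hypothesis $a>0$ enters).

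\emph{Step 3: graphicality, $C^{1,\g}$ bounds, and the remaining estimates.} Away from the region of $\pa G$ where two of the balls nearly touch, $\pa\Om$ is trapped in a thin neighbourhood of $\pa G$: the sphere-sliding (barrier) method, using that $H\le n(1+\de(\Om))$ supplies interior balls of radius $\approx1$ and that $\Om$ is close to $G$ supplies exterior balls of radius $\approx1$, prevents $\pa\Om$ from straying far from $\pa G$ and lets one write $\pa\Om$, over an open set $\S\subset\pa G$, as a normal graph $(\Id+\psi\,\nu_G)(\S)$ with $\|\psi\|_{C^0(\S)}$ controlled by a power of $\de(\Om)$. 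Since $\pa\Om$ is $C^2$ and $H\in C^0(\pa\Om)$, a Schauder bootstrap for the prescribed mean curvature equation satisfied by $\psi$ gives the a priori bound \eqref{stima psi intro}, and interpolating it against the $C^0$ smallness gives the decay of $\|\n\psi\|_{C^0(\S)}$ in \eqref{stima psi intro 2}. The complement $\pa G\setminus\S$ is covered by at most $C(n)L$ spherical caps, one near each near-contact between two balls, of diameter at most $C(n)\de(\Om)^{\a/4(n+1)}$: their number is bounded by $\#J$, and a cap cannot be larger without forcing extra perimeter incompatible with $P(\Om)=(n+1)|\Om|$ and $|\Om\Delta G|$ small. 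From here \eqref{main thm perimetri stima} follows by comparing the area of the graph both with $\#J\,P(B)$ and with $P(\Om)=(n+1)|\Om|$; \eqref{stima bordo omega meno immagine di sigma} by estimating $\H^n(\pa\Om\setminus(\Id+\psi\,\nu_G)(\S))$ via the cap sizes and the good-set bound; \eqref{main thm onesided hd} is read off the graph and the cap diameters; and \eqref{main thm hd stima} additionally uses the barrier argument above. I expect \emph{this step to be the main obstacle}: converting the purely integral information from Steps 1--2 into quantitative $C^{1,\g}$ graphicality over the correct domain $\S$, and pinning down precisely where $\pa G$ must be excised, requires a delicate iteration of elliptic estimates intertwined with the geometry of the normal map, and the propagation of the various powers of $\de(\Om)$ through that iteration is what fixes the exponents $\a$, $\a/4(n+1)$, $\a/8n(n+1)$, $\a/4n(n+1)$, $\a/4n^2(n+1)$ appearing in the statement.

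\emph{Step 4: near-tangency and the single-ball case.} For (i), if $\#J\ge2$ and some $B_{z_j,1}$ stayed at a definite distance from every other ball of the family, then all of $\pa B_{z_j,1}$ would lie in $\S$, so $(\Id+\psi\,\nu_G)(\pa B_{z_j,1})$ would be a closed connected hypersurface that is $C^1$-close to $\pa B_{z_j,1}$ and contained in $\pa\Om$; being relatively open and closed in $\pa\Om$, it would be a connected component of $\pa\Om$, forcing the body it bounds to be a connected component of $\Om$ and contradicting the connectedness of $\Om$. Hence every ball must lose a cap near a near-contact with another ball, which is \eqref{sjsjprimo tangenti}. For (ii), at a point $p\in\pa\Om$ near such a near-contact the set $\Om$ coincides, up to the error $|\Om\Delta G|$, with two tangent unit balls, so at a suitably chosen scale $r=r(n,\k)$ (with $\de(\Om)^{\a/4(n+1)}\ll r<\k$) one has $|B_{p,r}\setminus\Om|\le C(n)\,(r^{n+2}+|\Om\Delta G|)$, a quantity strictly smaller than $\k\,|B|\,r^{n+1}$ once $\de(\Om)\le c(n,L,\k)$; this contradicts the hypothesis $|B_{p,r}\setminus\Om|\ge\k\,|B|\,r^{n+1}$, so $\#J=1$ and $\Om$ is close to a single ball.
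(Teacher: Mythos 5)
Your proposal takes a genuinely different route from the paper, and one of its central steps contains a real gap.

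\textbf{Comparison of approaches.} The paper quantifies Ros' proof of Alexandrov's theorem via the torsion potential $f$ solving $\Delta f=1$ in $\Om$, $f=0$ on $\pa\Om$: Reilly's identity gives $\int_\Om\big|\nabla^2f-\Id/(n+1)\big|\le C(n)|\Om|\eta(\Om)^{1/2}$, a global Lipschitz bound on $f$ is extracted from the $P$-function argument of Caffarelli--Garofalo--Segala (this is where $H>0$ is used), and then one mollifies $f$ and studies the super-level sets $A=\{f_\e<-3\rho\}$. Since $\nabla^2 f_\e\ge\Id/2(n+1)$, the components of $A$ are convex; Pohozaev's identity then forces their radii to one. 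The parameterization of $\pa\Om$ over $\pa G$ is then obtained through Allard's regularity theorem, whose hypothesis is verified by a calibration argument within a Voronoi-type partition of $\R^{n+1}$ adapted to the balls $B_{z_j,1}$. You instead quantify the Montiel--Ros normal-map proof of Heintze--Karcher: the deficit controls the cut-time term and the AM--GM gap, from which you extract a good set $\Gamma\subset\pa\Om$ on which interior tangent balls of radius $\approx1$ exist and $A\approx\Id$, and then attempt sphere-sliding barriers plus Schauder to get graphicality. These are distinct strategies: the torsion potential is the ``dual'' PDE version of the normal-map argument, and what it buys in this problem is precisely the step your proposal leaves unexplained.

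\textbf{The gap: clustering the centres.} You assert that, because $\Id-A\approx 0$ on $\Gamma$, the centre map $x\mapsto x-\nu_\Om(x)$ is ``nearly locally constant'' on $\Gamma$ and one can ``cluster these centres around finitely many well-separated points''. This is not a small step; it is the heart of the multi-ball version of the theorem. Pointwise or integral smallness of $\Id-A$ on $\Gamma$ does not propagate to global near-constancy of the centre map: $\Gamma$ is only $\pa\Om$ minus a set of small measure, so it may have many small components, or a single very thin component along which the centre map drifts, and the $L^1$-type control you have is not enough to localize the centres without some global structure. In the paper this is exactly where the convexity of the components of $\{f_\e<-3\rho\}$ intervenes: each component is convex, hence connected and simply described, and the $C^0$ control on $\nabla^2 f_\e-\Id/(n+1)$ makes each one a near-ball; the finite count $\#J\le L$ then follows as you describe. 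Your proposal has no analogous mechanism, and without one, Step 2 does not deliver a finite family $\{B_{z_j,1}\}_{j\in J}$.

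\textbf{A secondary gap: exterior barriers.} In Step 3 you say that ``$\Om$ close to $G$ supplies exterior balls''. At the stage where you would invoke this you only know $|\Om\Delta G|$ is small, i.e.\ an $L^1$ closeness, which does not give uniform exterior tangent balls: $\pa\Om$ could have thin tentacles or deep wells of small volume. The paper avoids sphere-sliding entirely: it first proves a one-sided Hausdorff bound $\max_{x\in\pa G^*}\dist(x,\pa\Om)\le C\de^\a$ directly from the smoothed potential, and then obtains the parameterization via Allard, whose hypothesis (small area excess in balls of a calibrated scale) is verified by the divergence-theorem/calibration argument, not by barriers. If you want the sphere-sliding route to work, you must first establish a quantitative two-sided flatness of $\pa\Om$ near $\pa G$, which again requires the structure your Step 2 does not yet supply. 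The remaining parts of your proposal (the use of Chebyshev to get the good set, the volume argument for $\#J\le L$, the connectedness argument in Step 4(i), and the density argument in Step 4(ii)) are in line with what the paper does at the analogous stages.
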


A first consequence of Theorem \ref{thm main 1} is that examples of the kind constructed in \cite{butscher} are actually the only possible examples of boundaries with almost-constant mean curvature which are not close to a single sphere. Conversely, the examples of \cite{butscher} show that Theorem \ref{thm main 1} provides a qualitatively optimal information on sets with small Alexandrov's deficit. But of course, the strongest aspect of Theorem \ref{thm main 1} is its quantitative nature. It is precisely this last feature which is needed in order obtain explicit (although arguably non-sharp) orders of magnitude in the description of capillarity droplets, see Proposition \ref{corollary main stationary} below and the discussion after it.

A second remark is that, thanks to conclusion (i) and up to a translation of the balls $B_{z_j,1}$ of the order of $\de(\Om)$ appearing in \eqref{sjsjprimo tangenti}, one can work with a reference configuration $G$ such that $\pa G$ is connected, that is to say, for every $j\in J$ one can assert that $\pa B_{z_j,1}$ is tangent to $\pa B_{z_\ell,1}$ for some $\ell\ne j$. Of course, in doing so, the various smallness estimates \eqref{main thm Omega meno G}--\eqref{stima bordo omega meno immagine di sigma} will be of the same order of $\de(\Om)$ as in \eqref{sjsjprimo tangenti}.

The use of the constant $a$ should help to stress the ``quantization'' effect of the perimeter/energy (and of the volume) that happens under the small deficit assumption. Depending on the situation, one could be already satisfied of working with the simpler statement corresponding to the choice $a=1$.

We also comment on assumption (ii). Our idea here is to provide more robust smallness criterions for proximity to a single ball than $\de(\Om)\le\de(n,\|A\|_{C^0(\pa\Om)})$. The first criterion just amounts in asking that $P(\Om)\le (2-a)\,P(B)$, for $a\in(0,1]$. The interest of the second criterion is immediately understood if one considers that local minimizers of the capillarity energy satisfy uniform volume density estimates. Of course, a third criterion for proximity to a single ball is requiring the perimeter upper bound $P(\Om)\le 2-a$ (which corresponds to taking $L=1$).

We now illustrate the proof of Theorem \ref{thm main 1}. Our argument is based on Ros' proof of Alexandrov's theorem \cite{ros87ibero}, which follows closely the ideas of Reilly \cite{reilly}, and is based on the following {\it Heintze-Karcher inequality} \cite{heintzekarcher}: if $\Om$ is a bounded connected open set with $C^2$-boundary in $\R^{n+1}$ with $H>0$, then
\begin{equation}
    \label{hk inequality}
    \int_{\pa\Om}\frac{n}H\,d\H^n\ge(n+1)|\Om|\,.
\end{equation}
Now, if $H$ is constant, then it must be $H=H_0=n\,P(\Om)/(n+1)|\Om|$, so that $\Om$ must be an equality case in \eqref{hk inequality}. By exploiting Reilly's identity \cite{reilly}, Ros proves that if equality holds in \eqref{hk inequality}, then the solution $f$ of
\[  \left\{\begin{split}
    \Delta f=1\quad\mbox{in $\Om$}\,,
    \\
    f=0\quad\mbox{on $\pa\Om$}\,,
  \end{split}\right .
\]
satisfies $\nabla^2f=\Id/(n+1)$ on $\Om$ and $|\nabla f|=n/H_0(n+1)$ on $\pa\Om$. By $\nabla^2f=\Id/(n+1)$ on $\Om$, there exist $x_0\in\R^{n+1}$ and $c<0$ such that $f(x)=c+|x-x_0|^2/2(n+1)$ for every $x\in\Om$, i.e. $\Om$ is the ball of center $x_0$ and radius $r=\sqrt{-2(n+1)c}$, while, by $|\nabla f|=n/H_0(n+1)$ on $\pa\Om$, it must be $r=n/H_0$. When $H$ is not constant, one can still infer from the proof of \eqref{hk inequality} that
\begin{eqnarray}
\label{stima L1 hessiano f intro}
C(n)\,|\Om|\,\de(\Om)^{1/2}&\ge&\int_\Om\Big|\nabla^2 f-\frac{\Id}{n+1}\Big|\,,
\\
\label{stima L2 derivata normale intro}
C(n)\,\Big(\frac{n}{H_0}\Big)^2\,P(\Om)\,\de(\Om)&\ge&\int_{\pa\Om}\Big|\frac{n/H_0}{n+1}-|\nabla f|\Big|^2\,,
\end{eqnarray}
where the second estimate holds if $\de(\Om)\le1/2$, and where $\nabla f=|\nabla f|\,\nu_\Om\ne 0$ on $\pa\Om$.

The problem of exploiting \eqref{stima L1 hessiano f intro} and \eqref{stima L2 derivata normale intro} in the description of $\Om$ has some analogies with the quantitative analysis of Serrin's overdetermined problem \cite{serrinrigidity} addressed in \cite{brandolininitschsalanitrombetti}. In our terminology, the main result from \cite{brandolininitschsalanitrombetti} states that, if $H_0=n$ and for some $t>0$ one has
\begin{equation}
  \label{salanitsch hp}
  \int_{\pa\Om}\,\Big|\frac{1}{n+1}-|\nabla f|\Big|\le  P(\Om)\,t\,,\qquad \|\nabla f\|_{C^0(\pa\Om)}\le \frac{1+t}{n+1}\,,
\end{equation}
then there exist finitely many disjoint balls $\{B_{x_i,r_i}\}_{i=1}^m$ such that
\begin{eqnarray}\label{salanitsch tesi}
  \Big|\Om\Delta\bigcup_{i=1}^mB_{x_i,r_i}\Big|^{(n+1)/2}+\max_{1\le i\le m}\big|r_i-1\big|
  \le C(n,\diam(\Om))\,t^{\beta}\,,\qquad \beta=\frac1{4n+13}\,.
\end{eqnarray}
Because of the uniform upper bound on $|\nabla f|$ in \eqref{salanitsch hp}, it is not clear if one can take advantage of this result in the proof of Theorem \ref{thm main 1}. At the same time, we have a different condition at our disposal, namely \eqref{stima L1 hessiano f intro}, and by combining \eqref{stima L1 hessiano f intro} with a global Lipschitz estimate for $f$ (which is based on \cite{caffarelligarafolosegala}, and exploits the geometric assumption that $H>0$ on $\pa\Om$), we obtain a more precise control than \eqref{salanitsch tesi} on the distance of $\Om$ from a finite family of balls. (Indeed, the power $\a$ in \eqref{main thm Omega meno G} is larger than the power $2\b/(n+1)$ appearing in \eqref{salanitsch tesi}.) Finally, in Serrin's overdetermined problem the limiting balls need not to be tangent (sets $\Omega$ with small $t$ may contain arbitrarily long connecting necks) and one does not expect Hausdorff estimates like \eqref{main thm onesided hd} and \eqref{main thm hd stima} to hold (a set $\Om$ with small $t$ may contain small inclusions of large mean curvature). In other words, although \eqref{salanitsch tesi} provides a qualitatively sharp information in the context of Serrin's problem, in the case of Alexandrov's theorem one expects, and thus wants to obtain, stronger information on $\Om$.

Coming back to the proof of Theorem \ref{thm main 1}, the first step consists in proving a qualitative result, see Theorem \ref{thm compattezza} below. Indeed, by combining a compactness argument, \eqref{stima L1 hessiano f intro} and \eqref{stima L2 derivata normale intro} with Reilly's identity, Pohozaev's identity, and Allard's regularity theorem (for integer rectifiable varifolds with bounded distributional mean curvature) one comes to prove the following fact: if $\{\Om_h\}_{h\in\N}$ is a sequence of open, bounded and connected sets with $C^2$-boundary in $\R^{n+1}$, $n\ge 2$, such that for some $L\in\N$, $L\ge 1$,
\[
\lim_{h\to\infty}\de(\Om_h)=0\,,\qquad\sup_{h\in\N}Q(\Om_h)<L+1\,,
\]
then, setting
\[
\l_h=\frac{P(\Om_h)}{(n+1)|\Om_h|}\,,\qquad\Om_h^*=\l_h\,\Om_h\,,
\]
and up to translations, one has
\[
\lim_{h\to\infty}\hd(\pa\Om_h^*,\pa G)+|P(\Om_h^*)-P(G)|=0\,,
\]
where $G$ is the union of at most $L$-many disjoint balls with unit radii, and with $\pa G$ connected. Moreover, for every $h$ large enough there exist open sets $\S_h\subset\pa G$ (obtained by removing from $\pa G$ at most $C(n)\,L$-many spherical caps) and functions  $\psi_h\in C^{1,\g}(\S_h)$ for every $\g\in(0,1)$ such that $(\Id+\psi_h\,\nu_G)(\S_h)\subset\pa\Om_h^*$, and
\[
\lim_{h\to\infty}\hd(\S_h,\pa G)=0\,,\qquad \|\psi_h\|_{C^{1,\g}(\S_h)}\le C(n,\g)\,,\qquad \lim_{h\to\infty}\|\psi_h\|_{C^1(\S_h)}=0\,.
\]
This qualitative stability result, Theorem \ref{thm compattezza}, is not needed in the proof of Theorem \ref{thm main 1}, and of course it is actually a corollary of it. We have nevertheless opted for including a direct discussion of it for the following reasons. First of all, it is a result of independent interest and possible usefulness, so it seems interesting to have a shorter proof of it. Secondly, by having Theorem \ref{thm compattezza} at hand one is able to clean up to some later quantitative arguments and obtain better estimates. Thirdly, Theorem \ref{thm main 1} is actually proved by quantitatively revisiting the proof of Theorem \ref{thm compattezza}, and therefore the separate treatment of the latter should makes more accessible the argument used in proving the former.

In this direction the main difficulty arises in the application of the area excess regularity criterion of Allard, which is needed to parameterize a large portion of $\pa\Om$ over a large portion of $\pa G$. A key point here is quantifying the size of $\H^n(\pa\Om\cap B_{x,r})$ on a range of scales $r$ proportional to a suitable power of $\de(\Om)$ and at points $x\in\pa\Om$ sufficiently close to $\pa G$. We address this issue by carefully partitioning $\R^{n+1}$ into suitable polyhedral regions associated to the balls $B_{z_j,1}$, and by then performing inside each of these regions a calibration type argument with respect to the corresponding ball $B_{z_j,1}$ (see, in particular, step six of the proof of Theorem \ref{thm main 1}).

Summarizing, Theorem \ref{thm main 1} is proved by combining a mix of different ideas from elliptic PDE theory, global geometric identities, and geometric measure theory, and it contains a quantitative (and qualitatively sharp) description of boundaries with almost-constant mean curvature.

\subsection{An application to capillarity surfaces}\label{section capillarity intro} The study of the basic capillarity-type energy functional in $\R^{n+1}$ leads to consider sets $\Om$ with small Alexandrov's deficit. Indeed, given a potential energy density $g:\R^{n+1}\to\R$, in capillarity theory one considers the free energy
\begin{equation*} % \label{energy F}
  \F(\Om)=P(\Om)+\int_\Om\,g(x)\,dx\,,
\end{equation*}
and its volume-constrained stationary points and local/global minimizers. Capillarity phenomena are characterized by the dominance of surface tension over potential energy, which is the case when the volume parameter $m=|\Om|$ is small (as surface tension is of order $m^{n/(n+1)}$, while potential energy is typically of order $m$). Under mild assumptions on $g$ (essentially, coercivity at infinity, $g(x)\to\infty$ as $|x|\to\infty$), one can show the existence of volume-constrained global minimizers of $\F$ of any fixed volume. In particular, if $\Om_m$ is such a global minimizer with $|\Om_m|=m$, then by comparison with a ball $B^{(m)}$ of volume $m$ one sees that
\[
P(\Om_m)\le P(B^{(m)})+\int_{B^{(m)}\setminus\Om_m}\,g(x)\,dx\,,
\]
that is, the {\it isoperimetric deficit} $\de_{{\rm iso}}(\Om_m)$ of $\Om_m$ is small in terms of $m$,
\[
\de_{{\rm iso}}(\Om_m)=\frac{P(\Om_m)}{P(B^{(m)})}-1\le \frac{C(n)}{m^{n/(n+1)}}\,\int_{B^{(m)}\setminus\Om_m}\,g(x)\,dx\approx C(n,g)\,m^{1/(n+1)}\,.
\]
By the quantitative isoperimetric inequality \cite{fuscomaggipratelli,FigalliMaggiPratelliINVENTIONES}, one finds $x_m\in\R^{n+1}$ such that
\[
\Big(\frac{|\Om_m\Delta (x_m+B^{(m)})|}m\Big)^2\le C(n)\,\de_{{\rm iso}}(\Om_m)\,,
\]
so that, in conclusion, $\Om_m$ has to be close (in a normalized $L^1$-sense) to a ball of volume $m$. This observation is the starting point of the analysis performed in \cite{FigalliMaggiARMA}, where the proximity of $\Om_m$ to a ball of volume $m$ is quantified, under increasingly stronger smoothness assumptions on $g$, in increasingly stronger ways. For example, if $g\in C^1_{{\rm loc}}(\R^{n+1})$ and $m\le m_0(n,g)$, then $\Om_m$ is shown to be convex and $\pa\Om_m$ is proved to a $C^{2,\g}$-small normal deformation of $x_m+\pa B^{(m)}$, with explicit quantitative bounds on the $C^{2,\g}$-norm of this deformation in terms of $m$.

When dealing with volume-constrained local minimizers or stationary points of $\F$ one cannot rely anymore on the quantitative isoperimetric inequality, as one is not given the energy comparison inequality with $B^{(m)}$. However, in this more general context, Alexandrov's deficit turns out to be small in terms of the volume parameter $m$, thus opening the way for the application of Theorem \ref{thm main 1}.

Let us recall that given a vector field $X\in C^\infty_c(\R^{n+1};\R^{n+1})$, and denoted by $f_t$ the flow generated by $X$, then the first variation of $\F$ at $\Om$ along $X$ is defined as
\begin{equation}
  \label{stationary set intro}
  \de\F(\Om)[X]=\frac{d}{dt}\Big|_{t=0}\,\F(f_t(\Om))\,.
\end{equation}
One says that a set of finite perimeter $\Om\subset\R^{n+1}$ is a {\it volume-constrained stationary point of $\F$} if $\de\F(\Om)[X]=0$ for every $X\in C^\infty_c(\R^{n+1};\R^{n+1})$ such that $|f_t(\Om)|=|\Om|$ for every $t$ small enough. The following proposition, combined with Theorem \ref{thm main 1}, provides a complete description of such stationary boundaries, and its simple proof is presented in section \ref{section capillarity}.

\begin{proposition}\label{corollary main stationary}
  Let $g\in C^1_{{\rm loc}}(\R^{n+1})$, $R_0>0$, and $\Om$ be an open set with $C^2$-boundary such that $\Om\subset B_{R_0}$. If $\Om$ is a volume-constrained stationary point of $\F$ with $|\Om|=m$, then
  \begin{equation}
    \label{delta stazionario}
      \de(\Om)\le C_*(n)\,\|g\|_{C^1(B_{R_0})}\,m^{1/(n+1)}\,,
  \end{equation}
  for some constant $C_*(n)$.
\end{proposition}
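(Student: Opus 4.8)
The plan is to derive the Euler--Lagrange equation satisfied by $\pa\Om$, to identify the associated Lagrange multiplier by a divergence-theorem computation, and then to divide the resulting bound by $H_0$, which is large precisely because the volume $m$ is small.

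First I would start from the first variation formulas $\de P(\Om)[X]=\int_{\pa\Om}H\,(X\cdot\nu_\Om)\,d\H^n$, $\frac{d}{dt}\Big|_{t=0}\int_{f_t(\Om)}g\,dx=\int_{\pa\Om}g\,(X\cdot\nu_\Om)\,d\H^n$ and $\frac{d}{dt}\Big|_{t=0}|f_t(\Om)|=\int_{\pa\Om}(X\cdot\nu_\Om)\,d\H^n$. Testing $\de\F(\Om)[X]=0$ against divergence-free $X\in C^\infty_c(\R^{n+1};\R^{n+1})$ --- whose flows preserve volume and which satisfy $\int_{\pa\Om}(X\cdot\nu_\Om)\,d\H^n=0$ --- gives $\int_{\pa\Om}(H+g)(X\cdot\nu_\Om)\,d\H^n=0$; since the normal traces $X\cdot\nu_\Om$ of such fields exhaust the mean-zero functions on each component of $\pa\Om$, there is a constant $\lambda\in\R$ with
\[
H+g=\lambda\qquad\text{on }\pa\Om\,.
\]
(Equivalently, this is the usual Lagrange-multiplier rule: an arbitrary $X$ with vanishing normal flux is made admissible by correcting its flow at second order in $t$ with a fixed volume-adjusting field.)

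Next I would identify $\lambda$. Applying the tangential divergence theorem to the position field $x\mapsto x$ gives $\int_{\pa\Om}H\,(x\cdot\nu_\Om)\,d\H^n=n\,P(\Om)$ (this is exactly the computation through which $H_0$ is singled out in the introduction) together with $\int_{\pa\Om}(x\cdot\nu_\Om)\,d\H^n=(n+1)|\Om|$; inserting $H=\lambda-g$ and using $\Div(g(x)\,x)=(n+1)g+x\cdot\nabla g$ one obtains
\[
\lambda-H_0=\frac1{(n+1)|\Om|}\int_{\pa\Om}g\,(x\cdot\nu_\Om)\,d\H^n=\frac1{|\Om|}\int_\Om g\,dx+\frac1{(n+1)|\Om|}\int_\Om x\cdot\nabla g\,dx\,.
\]
Since $\Om\subset B_{R_0}$, the right-hand side is bounded by $\|g\|_{C^0(B_{R_0})}+\frac{R_0}{n+1}\|\nabla g\|_{C^0(B_{R_0})}$ \emph{uniformly in $\Om$}, and therefore $|H-H_0|=|(\lambda-H_0)-g|\le C\,\|g\|_{C^1(B_{R_0})}$ on $\pa\Om$, with $C$ depending only on $n$ and the fixed radius $R_0$. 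The essential feature here is that this bound does not degenerate as $m\to0$; a cruder estimate of the form $\|H-H_0\|_{C^0(\pa\Om)}\lesssim(\cdots)\,H_0$ would be useless for the conclusion.

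Finally, by the Euclidean isoperimetric inequality \eqref{euclidean iso} one has $P(\Om)\ge(n+1)|B|^{1/(n+1)}|\Om|^{n/(n+1)}$, so that $H_0=\frac{n\,P(\Om)}{(n+1)|\Om|}\ge n\,(|B|/m)^{1/(n+1)}$ and hence $1/H_0\le m^{1/(n+1)}/(n\,|B|^{1/(n+1)})$; dividing the bound of the previous paragraph by $H_0$ gives $\de(\Om)=\|H-H_0\|_{C^0(\pa\Om)}/H_0\le C_*\,\|g\|_{C^1(B_{R_0})}\,m^{1/(n+1)}$, as desired. For this proposition I do not expect a serious obstacle: the only point requiring any care is the reduction in the first step (from volume-preserving flows to all vector fields with vanishing normal flux), and one should be sure to exploit the sharp identity for $\lambda-H_0$ rather than a lossy one; note also that neither a sign condition on $H$ nor connectedness of $\Om$ is used. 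Keeping careful track of constants, $C_*$ depends on $n$ and $R_0$, the $R_0$-dependence entering only through the $\|\nabla g\|_{C^0(B_{R_0})}$ term (and disappearing under the scale-adapted convention $\|g\|_{C^1(B_{R_0})}=\|g\|_{C^0(B_{R_0})}+R_0\|\nabla g\|_{C^0(B_{R_0})}$).
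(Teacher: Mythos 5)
Your proof is correct and follows essentially the same route as the paper: derive $H+g=\lambda$ from stationarity, identify $\lambda-H_0$ by a divergence-theorem computation with the (truncated) position field, bound $|H-H_0|$ by $C\|g\|_{C^1(B_{R_0})}$, and divide by $H_0$, which by the isoperimetric inequality is at least of order $m^{-1/(n+1)}$. You are also right to flag the $R_0$-dependence of the constant coming from the $x\cdot\nabla g$ term, a subtlety the paper elides by writing $C_*(n)$ (it would be cleaner to acknowledge a dependence on $R_0$, or to adopt the scale-adapted $C^1$ norm you suggest); your remark that neither $H>0$ nor connectedness of $\Om$ is used is likewise accurate.
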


Under the assumptions of Proposition \ref{corollary main stationary}, let us now pick $L\in\N$, $L\ge 1$, and $a\in(0,1]$, define $c(n,L,a)$ as in Theorem \ref{thm main 1}, and assume that
\begin{eqnarray*}
Q(\Om)\le L+1-a\,,\qquad m\le \Big(\frac{c(n,L,a)}{C_*(n)\,\|g\|_{C^1(B_{R_0})}}\Big)^{n+1}\,.
\end{eqnarray*}
In this way, by Theorem \ref{thm main 1} and \eqref{delta stazionario}, there exists a finite family $\{B_{z_j,1}\}_{j\in J}$ of disjoint balls such that, looking for example at \eqref{main thm Omega meno G} and setting $G=\bigcup_{j\in J}B_{z_j,1}$,
\[
\frac{|\Om\Delta G_*|}{|\Om|}=\frac{|\Om^*\Delta G|}{|\Om^*|}\le C(n)\,L^2\,\|g\|_{C^1(B_{R_0})}^\a\,m^{\a/(n+1)}\,,
\]
where $\Om^*=(H_0/n)\Om$, and thus $G_*=\bigcup_{j\in J}B_{w_j,n/H_0}$. Notice that this proves a quantization of the volume of $\Om$, in the sense that $|\Om^*|$ is close to $\#J\,|B|$ with an error of order $C\,m^{\a/(n+1)}$, where $C=C(n,L,\|g\|_{C^1(B_{R_0})})$. Similar results carrying different geometric information are obtained from the other estimates appearing in Theorem \ref{thm main 1}.

In conclusion, the quantitative side of Theorem \ref{thm main 1} significantly strengthens the purely qualitative analysis that one could obtain by exploiting compactness arguments only, as it provides explicit orders of magnitude for the errors one makes in approximating $\Om^*$ with a unit balls compound.

We finally notice that $\Om^*$ will be close to a single ball as soon as volume-constrained stationarity is strengthened into some {\it local} minimality property. For example, it will suffice to require that $\F(\Om)\le\F(E)$ whenever $|E|=|\Om|$ and $\pa E\subset I_\s(\pa\Om)=\{x\in\R^{n+1}:\dist(x,\pa\Om)<\s\}$, with $\s=\s_0\,|\Om|/P(\Om)$ for some $\s_0>0$. Notice that although $E=B^{(m)}$ is not an admissible competitor in this local minimality condition, thus ruling out the possibility of applying \cite{fuscomaggipratelli}, $\Om$ will nevertheless be a volume-constrained stationary set for $\F$. Moreover, by a standard argument exploiting the local minimality of $\Om$, one obtains volume density estimates for $\Om^*$ which make possible to apply statement (ii) in Theorem \ref{thm main 1}.

\subsection{Organization of the paper} The proof of Theorem \ref{thm main 1} and of Proposition \ref{corollary main stationary} are discussed, respectively, in section \ref{section proof of the main theorem} and section \ref{section capillarity}. In Appendix \ref{appendix umbilical} we discuss the relation of the Alexandrov's stability problem with the study of almost-umbilical surfaces initiated by De Lellis and M\"uller in \cite{delellismuller1}.

\medskip

\noindent {\bf Acknowledgment:} We thank Manuel Ritor\'e for stimulating the writing of Appendix \ref{appendix umbilical}. This work has been done while GC was visiting the University of Texas at Austin under the support of NSF-DMS FRG Grant 1361122, of a Oden Fellowship at ICES, the GNAMPA of the Istituto Nazionale di Alta Matematica (INdAM), and the FIRB project 2013 ``Geometrical and Qualitative aspects of PDE''. FM is supported by NSF-DMS Grant 1265910 and  NSF-DMS FRG Grant 1361122.

\section{Proof of Theorem \ref{thm main 1}}\label{section proof of the main theorem} We begin by gathering various assumptions, preliminaries, and conventions.

\medskip

\noindent {\bf Constants}: The symbol $C$ denotes a generic positive constant whose value is independent from $n$ and $\Om$. We use the symbols $C_0$, $C_1$, etc. for constants whose specific value is referred to in multiple occasions (see, for instance \eqref{f Lip} below). We denote by $C(n)$ and $c(n)$ generic positive constants whose value does depend on $n$, but is independent from $\Om$, with the idea that $C(n)$ stands for a ``large'' constant, and $c(n)$ stands for a ``small'' constant. Similar conventions hold for $C(n,L)$, etc.

%The idea is now that \eqref{allard hp} holds for $S=\pa G$ at every $x\in\pa G$ at a suitable scale $r_x$. We thus need to quantitatively prove the perimeter convergence of $\Om$ to $G$ in order to transfer \eqref{allard hp} from points in $\pa G$ to points in $\pa\Om$, and thus show the local graphicality of large portions of $\pa\Om$ and $\pa G$ into the same systems of coordinates.

\medskip

\noindent {\bf Assumptions on $\Om$}: Thorough this section we always assume that
\begin{equation}\label{Omega C2 con H positiva}
  \begin{split}
    &\mbox{$\Om\subset\R^{n+1}$, $n\ge 2$, is a bounded connected open set}
    \\
    &\mbox{with $C^2$-boundary with $H>0$ on $\pa\Om$}\,.
  \end{split}
\end{equation}
From a certain point of our argument on we shall assume that (as one can always do up to a scaling)
\begin{equation}\label{Omega riscalato H0 uguale n}
  (n+1)|\Om|=P(\Om)\,.
\end{equation}
Recall that, by the Euclidean isoperimetric inequality (see \eqref{euclidean iso}), \eqref{Omega riscalato H0 uguale n} implies
\begin{equation}
  \label{Omega volume e perimetro maggiori B}
  |B|\le|\Om|\,,\qquad P(B)\le P(\Om)\,,
\end{equation}
where $B=\{x\in\R^n:|x|<1\}$. Moreover, \eqref{Omega riscalato H0 uguale n} is equivalent to $H_0=n$, so that
\[
\de(\Om)=n^{-1}\,\|H-n\|_{C^0(\Om)}\,,
\]
with $\de(\Om)=0$ if and only if $\Om$ is a ball of unit radius. Indeed, our convention for the scalar mean curvature $H$ is that
\[
\int_{\pa\Om}\,\Div^{\pa\Om}\,X\,d\H^n=\int_{\pa\Om}(X\cdot\nu_\Om)\,H\,d\H^n\,,\qquad\forall X\in C^1_c(\R^{n+1};\R^{n+1})\,,
\]
and thus the scalar mean curvature of $B$ is equal to $n$. In addition to \eqref{Omega riscalato H0 uguale n} we  also assume that
\begin{equation}
  \label{Omega perimetro minore L+1}
  P(\Om)\le(L+1-a)P(B)\,,\qquad\mbox{where $L\in\N$, $L\ge 1$, $a\in(0,1]$.}
\end{equation}
Note that, by combining \eqref{Omega perimetro minore L+1} with \eqref{Omega riscalato H0 uguale n} one finds
\begin{equation}
  \label{Omega volume minore L+1}
  |\Om|\le(L+1-a)|B|\,.
\end{equation}
We shall work under the assumption that $\de(\Om)\le c(n,L,a)$ for a suitably small positive constant $c(n,L,a)\le 1/2$: in particular,
\begin{equation}
  \label{Omega H fra nmezzi e 2n}
  \frac{n}2\le H(x)\le 2n\,,\qquad\forall x\in\pa\Om\,.
\end{equation}
By Topping's inequality \cite{topping}, one has
\[
\diam(\Om)\le C(n)\,\int_{\pa\Om}\,|H|^{n-1}\,,
\]
so that \eqref{Omega perimetro minore L+1} and \eqref{Omega H fra nmezzi e 2n} imply
\begin{eqnarray}
  \label{Omega diametro minore L}
  \diam(\Om)\le C(n)\,L\,.
\end{eqnarray}
Alternatively, by the monotonicity identity (see \cite[Theorem 2.1]{DeLellisNOTES}) and by $H\le 2n$ on $\pa\Om$, one has that
\begin{equation}
  \label{monotonicity formula}
  s\in(0,\infty)\mapsto e^{2ns}\,\frac{\H^n(\pa\Om\cap B_{x,s})}{s^n}\qquad\mbox{is monotone increasing for every $x\in\R^{n+1}$}\,.
\end{equation}
If $x\in\pa\Om$, then this function converges to $\H^n(\{z\in\R^n:|z|<1\})$ as $s\to 0^+$, and thus one obtains the uniform lower perimeter estimate
\begin{equation}
  \label{Omega stima densita perimetro basso}
  \H^n(\pa\Om\cap B_{x,s})\ge c(n)\,s^n\,,\qquad\forall x\in\pa\Om\,,s\in(0,1)\,.
\end{equation}
We notice that this last fact can be used jointly with $P(\Om)\le C(n)\,L$ to infer \eqref{Omega diametro minore L}. Finally, whenever $\Om$ satisfies \eqref{Omega C2 con H positiva} we define the {\it Heintze-Karcher deficit of $\Om$} as
\begin{equation}
  \label{def eta}
    \eta(\Om)=\frac{\int_{\pa\Om}\frac{n}H-(n+1)|\Om|}{\int_{\pa\Om}\frac{n}H}=1-\frac{(n+1)|\Om|}{\int_{\pa\Om}\frac{n}H}\,.
\end{equation}
Just like $\de(\Om)$, this is a scale invariant quantity such that $\eta(\Om)=0$ if and only if $\Om$ is a ball. One has
\begin{equation}
  \label{eta Omega minore delta Omega}
  \eta(\Om)\le\de(\Om)\,.
\end{equation}
Indeed,
  \begin{eqnarray*}
  \eta(\Om)&=&1-\frac{(n+1)|\Om|}{\int_{\pa\Om}\frac{n}H}=\frac{(n+1)|\Om|}{\int_{\pa\Om}\frac{n}H_0}-\frac{(n+1)|\Om|}{\int_{\pa\Om}\frac{n}H}
  \\
  &=&\frac{(n+1)|\Om|}n\,\frac{\int_{\pa\Om}\frac{1}H-\int_{\pa\Om}\frac{1}H_0}{\int_{\pa\Om}\frac{1}H_0\int_{\pa\Om}\frac{1}H}
  \le\frac{(n+1)|\Om|}n\,\frac{\de(\Om)\,\int_{\pa\Om}\frac{1}H}{\int_{\pa\Om}\frac{1}H_0\int_{\pa\Om}\frac{1}H}=\de(\Om)\,.
\end{eqnarray*}

\medskip

\noindent {\bf Torsion potential}: We denote by $f$ and $u$ the smooth functions defined on $\Om$ by setting
\begin{equation}
  \label{f definizione}
  \left\{\begin{split}
    \Delta f=1\quad\mbox{in $\Om$}\,,
    \\
    f=0\quad\mbox{on $\pa\Om$}\,,
  \end{split}\right .\qquad u=-f\,.
\end{equation}
Note that $f<0$ on $\Om$, with $\nabla f=|\nabla f|\,\nu_\Om$ on $\pa\Om$, and $\nabla_\nu f=\nu_\Om\cdot\nabla f=|\nabla f|>0$ on $\pa\Om$ by Hopf's lemma. We shall use two integral identities involving $f$, namely, the {\it Reilly's identity} (see, e.g., \cite[Equation (3)]{ros87ibero})
  \begin{equation}\label{reilly identity f constant}
  \int_{\pa\Om}\,H\,|\nabla f|^2=\int_\Om (\Delta f)^2-|\nabla^2f|^2\,,
  \end{equation}
and the {\it Pohozaev's identity}, see e.g. \cite[Theorem 8.30]{ambrosettimalchiodi},
\begin{equation}
  \label{pohozaev}
  (n+3)\int_\Om\,(-f)=\int_{\pa\Om}(x\cdot\nu_\Om)|\nabla f|^2\,.
\end{equation}
The first one quickly leads to prove Alexandrov's theorem and the Heintze-Karcher inequality, as shown in \cite{ros87ibero}.

\begin{lemma}\label{lemma hk}
  If $\Om$ and $f$ are as in \eqref{Omega C2 con H positiva} and \eqref{f definizione}, then
  \begin{eqnarray}\label{ros identity}
    \frac{|\Om|}{n+1}\,\Big(\int_{\pa\Om}\frac{n}H\,-(n+1)|\Om|\Big)
    &=&\int_{\pa\Om}\frac{1}{H}\,\int_\Om\,|\nabla^2f|^2-\frac{(\Delta f)^2}{n+1}
    \\\nonumber
    &&+\int_{\pa\Om}\frac{1}{H}\,\int_{\pa\Om}|\nabla f|^2\,H\,-\Big(\int_{\pa\Om}|\nabla f|\Big)^2\,,
  \end{eqnarray}
  In particular, \eqref{hk inequality} holds, and if $H$ is constant on $\pa\Om$, then $H=H_0>0$ and $\Om$ is a ball.
\end{lemma}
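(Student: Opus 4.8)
The plan is to obtain \eqref{ros identity} by a purely algebraic rearrangement of Reilly's identity \eqref{reilly identity f constant} together with two elementary consequences of the divergence theorem, and then to read off both \eqref{hk inequality} and the Alexandrov rigidity from the manifest non-negativity of the two terms on the right-hand side of \eqref{ros identity}. Throughout, $f$ is smooth up to $\pa\Om$ by elliptic regularity on the $C^2$-domain $\Om$, so every boundary integral below is well defined and every integration by parts is legitimate.

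First I would record the two divergence-theorem facts $\int_\Om(\Delta f)^2=|\Om|$ (since $\Delta f=1$) and $\int_{\pa\Om}|\nabla f|=\int_{\pa\Om}\nabla_\nu f=\int_\Om\Delta f=|\Om|$ (using $\nabla f=|\nabla f|\,\nu_\Om$ on $\pa\Om$). Writing $I:=\int_{\pa\Om}\tfrac1H$, which is finite and strictly positive because $H$ is continuous and positive on the compact set $\pa\Om$, and $R:=\int_{\pa\Om}H|\nabla f|^2$, identity \eqref{reilly identity f constant} becomes $R=|\Om|-\int_\Om|\nabla^2 f|^2$, hence $\int_\Om\bigl(|\nabla^2 f|^2-(\Delta f)^2/(n+1)\bigr)=\tfrac{n}{n+1}|\Om|-R$. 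Substituting these three relations into the right-hand side of \eqref{ros identity} collapses it to $I\bigl(\tfrac{n}{n+1}|\Om|-R\bigr)+I\,R-|\Om|^2=\tfrac{n}{n+1}\,I\,|\Om|-|\Om|^2$, which is exactly the left-hand side $\tfrac{|\Om|}{n+1}\bigl(nI-(n+1)|\Om|\bigr)$. This proves \eqref{ros identity}.

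Next, both terms on the right-hand side of \eqref{ros identity} are non-negative. Decomposing $\nabla^2 f=\tfrac{\Delta f}{n+1}\,\Id+A$ with $A$ trace-free gives $|\nabla^2 f|^2=(\Delta f)^2/(n+1)+|A|^2\ge(\Delta f)^2/(n+1)$ pointwise, and $I>0$, so the first term is $\ge0$; and Cauchy--Schwarz in the form $\bigl(\int_{\pa\Om}|\nabla f|\bigr)^2=\bigl(\int_{\pa\Om}(\sqrt H\,|\nabla f|)\,H^{-1/2}\bigr)^2\le\int_{\pa\Om}H|\nabla f|^2\cdot\int_{\pa\Om}\tfrac1H$ gives the second. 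Since $|\Om|>0$, \eqref{ros identity} forces $\int_{\pa\Om}\tfrac nH\ge(n+1)|\Om|$, i.e. \eqref{hk inequality}. For the rigidity, assume $H$ is constant on $\pa\Om$; testing the defining identity $\int_{\pa\Om}\Div^{\pa\Om}X=\int_{\pa\Om}(X\cdot\nu_\Om)H$ with $X(x)=x$ (so $\Div^{\pa\Om}X=n$) and using $\int_{\pa\Om}x\cdot\nu_\Om=\int_\Om\Div x=(n+1)|\Om|$ yields $n\,P(\Om)=H\,(n+1)|\Om|$, hence $H=H_0=nP(\Om)/(n+1)|\Om|>0$ and thus $\int_{\pa\Om}\tfrac nH=(n+1)|\Om|$; so equality holds in \eqref{hk inequality}, the left-hand side of \eqref{ros identity} vanishes, and therefore both of its non-negative right-hand terms vanish. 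Vanishing of the first, with $I>0$, forces $A\equiv0$, i.e. $\nabla^2 f\equiv\Id/(n+1)$ on $\Om$, so $f(x)=|x-x_0|^2/2(n+1)+c$ for some $x_0\in\R^{n+1}$ and some $c<0$ (the sign being forced by $f=0$ on the genuine hypersurface $\pa\Om$). Then $\pa\Om$ is contained in, hence by a standard connectedness/dimension argument equal to, the sphere $\{|x-x_0|=\sqrt{-2(n+1)c}\}$, and the bounded connected open set $\Om$ is the ball it bounds.

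I do not expect a genuine obstacle: the only two points deserving a word of care are the regularity of $f$ up to $\pa\Om$ needed to validate Reilly's and the divergence-theorem identities, and the elementary topological step that a bounded connected open set whose $C^2$-boundary lies inside a single sphere must be the open ball bounded by that sphere.
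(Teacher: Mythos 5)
Your proof is correct and follows essentially the same route as the paper: the same three ingredients (Reilly's identity, $\int_{\pa\Om}|\nabla f|=\int_\Om\Delta f=|\Om|$, $\int_\Om(\Delta f)^2=|\Om|$), the same pair of non-negativity observations (the Cauchy--Schwarz inequality on $\pa\Om$ with weight $H$, and $|\nabla^2 f|^2\ge(\Delta f)^2/(n+1)$), and the same rigidity argument via $H=H_0$ and $\nabla^2 f\equiv\Id/(n+1)$. The only cosmetic differences are that you make the algebraic collapse to the left-hand side of \eqref{ros identity} fully explicit, and that you phrase $|\nabla^2 f|^2-(\Delta f)^2/(n+1)=|A|^2$ via the trace/traceless decomposition rather than citing $(\trace M)^2\le(n+1)|M|^2$; these are equivalent, and your version makes the equality case in the rigidity step slightly more transparent.
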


\begin{proof} By the divergence theorem and by H\"older's inequality,
  \begin{eqnarray*}
  |\Om|^2=\big(\int_{\pa\Om}\nabla_\nu f\,\big)^2=\Big(\int_{\pa\Om} \frac{\sqrt{H}\,|\nabla f|}{\sqrt{H}}\Big)^2
  \le \int_{\pa\Om}\,\frac{1}{H}\,\int_{\pa\Om} |\nabla f|^2\,H\,.
  \end{eqnarray*}
  Thanks to \eqref{reilly identity f constant},
  \[
  \int_{\pa\Om}\,H\,|\nabla f|^2=\frac{n}{n+1}\,|\Om|+\int_\Om \frac{(\Delta f)^2}{n+1}-|\nabla^2f|^2\le \frac{n}{n+1}\,|\Om|\,,
  \]
  where we have used the Cauchy-Schwartz inequality
  \[
  (\trace M)^2=(M:\Id)^2\le|\Id|^2\,|M|^2=(n+1)\,|M|^2\,,\qquad\forall M\in\R^n\otimes\R^n\,.
  \]
  (Here and in the following, we denote by $:$ the scalar product on $\R^n\otimes\R^n$, and by $|\cdot|$ the corresponding Hilbert norm on $\R^n\otimes\R^n$.) This proves \eqref{ros identity}. Let us now assume that $H$ is constant on $\pa\Om$, then by applying the divergence theorem on $\pa\Om$ and on $\Om$, one finds
  \begin{eqnarray}\label{computation}
  \int_{\pa\Om}\frac{n}H&=&\frac{n P(\Om)}{H}=\frac{\int_{\pa\Om}\Div^{\pa\Om}(x)\,d\H^n_x}{H}
  \\\nonumber
  &=&\frac{\int_{\pa\Om}(x\cdot\nu_\Om)\,H\,d\H^n_x}{H}=\int_{\pa\Om}x\cdot\nu_\Om\,d\H^n_x=\int_\Om\,\Div(x)\,dx=(n+1)|\Om|\,,
  \end{eqnarray}
  so that $H=H_0$ and equality holds in \eqref{hk inequality}. In particular, \eqref{ros identity} gives that $\nabla^2f(x)=\Id/(n+1)$ for every $x\in\Om$, so that, being $\Om$ connected,
  \[
  f(x)=c+\frac{|x-x_0|^2}{2(n+1)}\,,
  \]
  for some $c<0$ and $x_0\in\R^n$. Since $f=0$ on $\pa\Om$, we find that $\Om$ is the ball of center $x_0$ and radius $\sqrt{-2(n+1)c}$.
\end{proof}

We now exploit \cite{talenti1976} and \cite{caffarelligarafolosegala} to obtain universal estimates on $f$.

\begin{lemma}\label{lemma p function}
  If $\Om$ and $f$ are as in \eqref{Omega C2 con H positiva} and \eqref{f definizione}, then
  \begin{eqnarray}
    \label{f C0}
    \|f\|_{C^0(\Om)}&\le& \frac{1}{2(n+1)}\,\Big(\frac{|\Om|}{|B|}\Big)^{2/(n+1)}\le C\,|\Om|^{2/(n+1)}\,,
    \\
    \label{f Lip}
    \|\nabla f\|_{C^0(\Om)}&\le& \sqrt{2}\,\|f\|_{C^0(\Om)}^{1/2}\le C_0\,|\Om|^{1/(n+1)}\,,
    \\
    \label{f L2 estimates}
    \|\nabla^2f\|_{L^2(\Om)}&\le&|\Om|^{1/2}\,.
  \end{eqnarray}
\end{lemma}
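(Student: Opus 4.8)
The three estimates in Lemma~\ref{lemma p function} are of independent nature and each one rests on a single tool, so I would prove them one at a time, in the order stated. The plan is: get \eqref{f C0} from Talenti's symmetrization comparison \cite{talenti1976}; get \eqref{f L2 estimates} by feeding $\Delta f=1$ into Reilly's identity \eqref{reilly identity f constant} and using $H>0$; and get \eqref{f Lip} from the $P$-function gradient bound of \cite{caffarelligarafolosegala}, combined with \eqref{f C0}.

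For \eqref{f C0} I would set $u=-f$, so that by \eqref{f definizione} one has $\Delta u=-1$ in $\Om$, $u=0$ on $\pa\Om$, and $u\ge0$. By Talenti's comparison principle, $\|u\|_{C^0(\Om)}\le\|v\|_{C^0(\Om^\#)}$, where $\Om^\#=B_{0,R}$ is the ball with $|B|\,R^{n+1}=|\Om|$ and $v$ solves $\Delta v=-1$ in $\Om^\#$, $v=0$ on $\pa\Om^\#$; since $v(x)=(R^2-|x|^2)/2(n+1)$ this gives $\|f\|_{C^0(\Om)}=\|u\|_{C^0(\Om)}\le v(0)=R^2/2(n+1)=\tfrac1{2(n+1)}\big(|\Om|/|B|\big)^{2/(n+1)}$, and the remaining bound by $C\,|\Om|^{2/(n+1)}$ holds because $(n+1)^{-1}|B|^{-2/(n+1)}$ is bounded by a universal constant. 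For \eqref{f L2 estimates} I would simply note that, since $\Delta f=1$, identity \eqref{reilly identity f constant} becomes $\int_{\pa\Om}H\,|\nabla f|^2=|\Om|-\int_\Om|\nabla^2f|^2$; as $H>0$ on $\pa\Om$ by \eqref{Omega C2 con H positiva} and $|\nabla f|^2\ge0$, the left-hand side is nonnegative, whence $\int_\Om|\nabla^2f|^2\le|\Om|$, i.e. \eqref{f L2 estimates}.

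For \eqref{f Lip} I would first record that $\Phi:=|\nabla f|^2-\tfrac{2f}{n+1}$ is subharmonic on $\Om$: one has $\Delta|\nabla f|^2=2|\nabla^2f|^2+2\,\nabla f\cdot\nabla(\Delta f)=2|\nabla^2f|^2$ because $\Delta f\equiv1$, while the Cauchy--Schwarz inequality $(n+1)|\nabla^2f|^2\ge(\Delta f)^2=1$ (the same one used in Lemma~\ref{lemma hk}) gives $\Delta\Phi=2|\nabla^2f|^2-\tfrac2{n+1}\ge0$. Since $f=0$ on $\pa\Om$, the maximum principle applied to $\Phi$ forces $\sup_\Om|\nabla f|=\sup_{\pa\Om}|\nabla f|$, so that \eqref{f Lip} reduces to the boundary estimate $\sup_{\pa\Om}|\nabla f|^2\le 2\,\|f\|_{C^0(\Om)}$. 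This last bound is the global Lipschitz estimate of \cite{caffarelligarafolosegala}, and it is precisely here that the geometric hypothesis $H>0$ on $\pa\Om$ is exploited, through Hopf's lemma for $\Phi$ at a boundary maximum together with the boundary identity $\langle\nabla^2f\,\nu_\Om,\nu_\Om\rangle=1-|\nabla f|\,H$ (which holds because $f$ vanishes on $\pa\Om$). Granting it, the first inequality in \eqref{f Lip} follows, and the second is immediate from \eqref{f C0}, with $C_0=(n+1)^{-1/2}|B|^{-1/(n+1)}$, again a universal constant.

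The main obstacle is the sharp boundary gradient bound $\sup_{\pa\Om}|\nabla f|^2\le 2\,\|f\|_{C^0(\Om)}$: the subharmonicity of $\Phi$ only locates the supremum of $|\nabla f|$ on $\pa\Om$, while extracting the constant $2$ — which is attained in the limit of thin slabs — genuinely requires the boundary analysis of \cite{caffarelligarafolosegala} based on $H>0$. The other two estimates, by contrast, are one-line consequences of symmetrization and of Reilly's identity respectively.
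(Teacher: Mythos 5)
Your treatments of \eqref{f C0} and \eqref{f L2 estimates} are correct and coincide with the paper's: \eqref{f C0} via Talenti's comparison and \eqref{f L2 estimates} by plugging $\Delta f=1$ into Reilly's identity. The problem is \eqref{f Lip}, where there is a genuine gap. Your $\Phi=|\nabla f|^2-\tfrac{2f}{n+1}$ is the classical Payne $P$-function; it is indeed subharmonic by Cauchy--Schwarz, and the maximum principle correctly puts the supremum of $|\nabla f|$ on $\pa\Om$. But from that point on your argument does not close. Hopf's lemma applied to $\Phi$ at a boundary maximum, combined with the geometric identity $\nabla^2_{\nu\nu}f=1-|\nabla f|\,H$, yields $|\nabla f(x_0)|\le\tfrac{n}{(n+1)H(x_0)}$: a curvature-type bound on $|\nabla f|$, not the claimed $\sup_{\pa\Om}|\nabla f|^2\le 2\|f\|_{C^0(\Om)}$. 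These two conclusions are not interchangeable (and the first is not what's needed for \eqref{f Lip}). The coefficient $\tfrac{2}{n+1}$ in $\Phi$ is exactly what makes $\Phi$ subharmonic, but it is the wrong coefficient for the boundary contradiction you want.

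The paper instead works with $p=|\nabla u|^2+2(u-\|u\|_{C^0})$ (with $u=-f$), which carries coefficient $2$ rather than $\tfrac{2}{n+1}$. This $p$ is \emph{not} subharmonic — it satisfies only the degenerate inequality $|\nabla u|^2\Delta p+2\nabla u\cdot\nabla p\ge\tfrac12|\nabla p|^2$ on $\{|\nabla u|>0\}$ from \cite{caffarelligarafolosegala} — so locating its maximum on $\pa\Om$ cannot be done by a one-line maximum principle; the paper has to argue by contradiction and show the level set $\{p=\max p\}$ is both open and closed (ruling out an interior maximum via the critical point of $u$). Once the maximum of $p$ is placed on $\pa\Om$, Hopf gives $\nabla_\nu p(x_0)=2\nabla_\nu u(x_0)(\nabla^2_{\nu\nu}u(x_0)+1)>0$, hence $\nabla^2_{\nu\nu}u(x_0)<-1$, which contradicts $\nabla^2_{\nu\nu}u=-1-H\nabla_\nu u>-1$ because $H>0$ and $\nabla_\nu u<0$; this forces $p\le 0$, i.e., $|\nabla u|^2\le 2\|u\|_{C^0}$. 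In short: knowing $\sup_\Om|\nabla f|$ is attained on $\pa\Om$ (which your $\Phi$ shows) does not by itself yield the $L^\infty$ bound; and a boundary maximum of $|\nabla f|$ is not automatically a boundary maximum of $p$, so you cannot simply hand off to a Hopf argument for $p$ either. You need the full CGS $P$-function machinery for the function $p$, with the coefficient $2$, from the start.
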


\begin{proof}
  By a classical result of Talenti \cite{talenti1976}, the radially symmetric decreasing rearrangement $(-f)^\star$ of $-f$ satisfies the pointwise estimate
  \begin{equation}
    \label{pointwise talenti}
      (-f)^\star(x)\le \frac{R^2-|x|^2}{2(n+1)}\,,\qquad\mbox{where}\qquad R=\Big(\frac{|\Om|}{|B|}\Big)^{1/(n+1)}\,,
  \end{equation}
  so that the first inequality in \eqref{f C0} follows immediately. The second inequality in \eqref{f C0} is then obtained by recalling that
  \begin{equation}
    \label{palle e gamma}
      |\{z\in\R^k:|z|<1\}|=\frac{\pi^{k/2}}{\Gamma(1+(k/2))}\,,\qquad \lim_{t\to\infty}\frac{\Gamma(1+t)}{\sqrt{2\pi t}(t/e)^t}=1\,.
  \end{equation}
  (Thus \eqref{f C0} does not need the assumption that $H>0$ on $\pa\Om$). Moreover, we immediately deduce \eqref{f L2 estimates} from $\Delta f=1$ and \eqref{reilly identity f constant}, so that we are left to prove \eqref{f Lip}. With $u=-f$ we set
  \[
  p=|\nabla u|^2+2(u-\|u\|_{C^0(\Om)})\,,
  \]
  and aim to prove that $p\le 0$. The key fact is the observation that
  \begin{equation}
    \label{grande luis}
      |\nabla u|^2\,\Delta p+2\,\nabla u\cdot\nabla p\ge \frac{|\nabla p|^2}2\,,\qquad\,\mbox{on $\{|\nabla u|>0\}$}\,,
  \end{equation}
  see \cite[Equation (2.7)]{caffarelligarafolosegala}. Given \eqref{grande luis}, we argue by contradiction and assume that the maximum $p_0$ of $p$ in $ \ov{\Om}$ is positive. We first claim that $p_0$ is achieved on $\pa\Om$. Indeed, let $U=\{x\in\Om:p(x)=p_0\}$, then $U$ is obviously closed. If $x\in U$, then $|\nabla u(x)|^2\ge p(x)=p_0>0$ and so $p$ satisfies
  \[
  \Delta p+T\cdot\nabla p\ge 0\,,\qquad\mbox{in a neighborhood of $x$}\,,
  \]
  where the vector-field
  \begin{equation}\label{T_def}
  T=2\frac{\nabla u}{|\nabla u|^2}
  \end{equation}
  is bounded on that same neighborhood. By the strong maximum principle, $p$ must be constant in that neighborhood. This shows that $U$ is open, so that $U=\Om$ by connectedness. At the same time, there exists $x^*\in\Om$ such that $\nabla u(x^*)=0$, so that $p_0=p(x^*)=2\big(u(x^*)-\|u\|_{C^0(\Om)}\big)\le 0$ a contradiction. This shows that there exists $x_0\in\pa\Om$ such that
  \[
  p(x_0)=p_0>p(x)\,,\qquad\forall x\in\Om\,.
  \]
  By Hopf's lemma, $\nabla_\nu u(x_0)<0$, so that
  \[
  \Delta p+T\cdot\nabla p\ge 0\,,\qquad\mbox{in a neighborhood of $x_0$ in $\Om$}\,,
  \]
  where once again the vector-field $T$ (defined as in \eqref{T_def} above) is bounded. By Hopf's lemma,
  \begin{equation}
    \label{fv1}
      \nabla_\nu p(x_0)>0\,.
  \end{equation}
  At the same time one has
  \[
  \nabla_\nu p(x_0)=2\Big( \nabla u(x_0)\cdot\nabla(\nabla_\nu u)(x_0)+\nabla_\nu u(x_0) \Big)\,.
  \]
  Since $u=0$ on $\pa\Om$, we have $\nabla u=(\nabla_\nu u)\,\nu$ on $\pa\Om$, so that the above identity becomes
  \[
  \nabla_\nu p(x_0)=\nabla_\nu u(x_0)\big(\nabla_{\nu\,\nu} u(x_0)+1\big)\,.
  \]
  Since $\nabla_\nu u(x_0)<0$, \eqref{fv1} gives us
  \begin{equation}
    \label{fv2}
    \nabla_{\nu\,\nu} u(x_0)<-1=\Delta u(x_0)\,.
  \end{equation}
  We now obtain a contradiction by showing that, thanks to $H>0$, one has
  \begin{equation}\label{fv3}
    \Delta u(x_0)<\nabla_{\nu\,\nu} u(x_0)\,.
  \end{equation}
  Indeed, assuming without loss of generality that $x_0=0$ and that $\Om$ is (locally at $0$) the subgraph of a function $\vphi$ on $n$-variables such that $\vphi(0)=0$ and $\nabla\vphi(0)=0$ (so that $\nu_\Om(0)=e_n$, and thus $-H(0)=\Delta\vphi(0)$), by differentiating $u(z,\vphi(z))=0$ at $z=0$ twice along the direction $z_i$, one gets
  \[
  0=\nabla_{z_iz_i}u(z,\vphi(z))+2\nabla_{z_i\,\nu}u(z,\vphi(z))\,\nabla_{z_i}\vphi(z)+\nabla_\nu u(z,\vphi(z))\,\nabla_{z_i\,z_i}\vphi(z)+\nabla_{\nu\nu}u(z)\,(\nabla_{z_i}\vphi(z))^2
  \]
  which, evaluated at $z=0$, by $\nabla\vphi(0)=0$ gives us
  \[
  0=\nabla_{z_iz_i}u(0)+\nabla_\nu u(0)\,\nabla_{z_i\,z_i}\vphi(0)\,.
  \]
  By adding up over $i=1,...,n$, and by $H(0)>0$ and $\nabla_\nu u(0)<0$, we conclude that
  \[
  0=\Delta u(0)-\nabla_{\nu\,\nu}u(0)-H(0)\,\nabla_\nu u(0)>\Delta u(0)-\nabla_{\nu\,\nu}u(0)\,,
  \]
  so that \eqref{fv3} holds.
\end{proof}

\begin{lemma}\label{lemma stime integrali}
  If $\Om$ and $f$ are as in \eqref{Omega C2 con H positiva} and \eqref{f definizione}, then
  \begin{equation}
    \label{stima L1 hessiano f}
      C(n)\,|\Om|\,\sqrt{\eta(\Om)}\ge \int_\Om\Big|\nabla^2 f-\frac{\Id}{n+1}\Big|\,.
  \end{equation}
  If, in addition, $\de(\Om)\le 1/2$, then
    \begin{equation}
  \label{stima L2 derivata normale}
  C(n)\,\Big(\frac{n}{H_0}\Big)^2\, P(\Om)\,\de(\Om)\ge \int_{\pa\Om} \Big|\frac{n/H_0}{n+1}-|\nabla f|\Big|^2\,.
\end{equation}
\end{lemma}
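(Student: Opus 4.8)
The plan is to read off both inequalities directly from the two global identities already recorded, namely Ros' identity \eqref{ros identity} (see Lemma \ref{lemma hk}) and Reilly's identity \eqref{reilly identity f constant}, together with the elementary bound $\eta(\Om)\le\de(\Om)$ of \eqref{eta Omega minore delta Omega}. The one computation to do at the outset is the pointwise identity
\[
\Big|\nabla^2 f-\frac{\Id}{n+1}\Big|^2=|\nabla^2 f|^2-\frac{(\Delta f)^2}{n+1}\qquad\mbox{on $\Om$}\,,
\]
valid because $\Delta f=1$ forces $\nabla^2 f:\Id=\trace\nabla^2 f=1$ while $|\Id|^2=n+1$; this is exactly what turns the ``defect'' term on the right of \eqref{ros identity} into the $L^2$-norm of $\nabla^2 f-\Id/(n+1)$.

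For \eqref{stima L1 hessiano f} I would proceed as follows. In \eqref{ros identity} the second summand on the right is $\ge0$ by Cauchy--Schwarz (writing $\int_{\pa\Om}|\nabla f|=\int_{\pa\Om}\sqrt H\,|\nabla f|\cdot H^{-1/2}$), while the first summand equals $\int_{\pa\Om}H^{-1}\,\int_\Om|\nabla^2 f-\Id/(n+1)|^2$ by the pointwise identity above. Discarding the boundary summand and dividing through by $\int_{\pa\Om}H^{-1}=\tfrac1n\int_{\pa\Om}\tfrac nH$, the left-hand side becomes $\tfrac{n}{n+1}\,|\Om|\,\eta(\Om)$ by the definition \eqref{def eta} of $\eta(\Om)$; hence $\int_\Om|\nabla^2 f-\Id/(n+1)|^2\le\tfrac{n}{n+1}|\Om|\,\eta(\Om)$. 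One then passes to $L^1$ by Hölder's inequality, $\int_\Om|\nabla^2 f-\Id/(n+1)|\le|\Om|^{1/2}\big(\int_\Om|\nabla^2 f-\Id/(n+1)|^2\big)^{1/2}$, obtaining \eqref{stima L1 hessiano f} with $C(n)=\sqrt{n/(n+1)}$.

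For \eqref{stima L2 derivata normale} I would write $g=|\nabla f|$ and $a=\tfrac{n/H_0}{n+1}$, expand the square, and use $\int_{\pa\Om}g=\int_{\pa\Om}\nabla_\nu f=\int_\Om\Delta f=|\Om|$ together with $P(\Om)=\tfrac{(n+1)H_0}{n}|\Om|$ (the defining relation for $H_0$): a short computation gives $a^2P(\Om)-2a|\Om|=-\tfrac{n|\Om|}{H_0(n+1)}$, hence $\int_{\pa\Om}|a-g|^2=\int_{\pa\Om}g^2-\tfrac{n|\Om|}{H_0(n+1)}$, and it remains only to bound $\int_{\pa\Om}g^2$ from above. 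Reilly's identity \eqref{reilly identity f constant}, with $\Delta f=1$ and $|\nabla^2 f|^2\ge\tfrac1{n+1}$, gives $\int_{\pa\Om}H\,g^2\le\tfrac{n}{n+1}|\Om|$; since $\de(\Om)\le\tfrac12$ yields $H\ge H_0(1-\de(\Om))>0$ on $\pa\Om$ straight from \eqref{alexandrov deficit}, division gives $\int_{\pa\Om}g^2\le\tfrac{n|\Om|}{H_0(n+1)(1-\de(\Om))}$. Plugging this in, using $\tfrac1{1-\de}-1=\tfrac{\de}{1-\de}\le2\de$ for $\de\le\tfrac12$, and $|\Om|=\tfrac{n}{(n+1)H_0}P(\Om)$ once more, produces \eqref{stima L2 derivata normale} with $C(n)=2/(n+1)^2$. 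I do not expect a genuine obstacle here: all the content sits in \eqref{ros identity} and \eqref{reilly identity f constant}, and the only points requiring care are the normalization constants in the pointwise identity above and the bookkeeping of the $H_0$-scalings in the last argument --- in particular one must avoid implicitly using $H_0=n$, since the lemma is stated for general $\Om$.
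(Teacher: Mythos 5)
Your proof is correct, and for both inequalities you take a somewhat more direct route than the paper. For \eqref{stima L1 hessiano f}, the paper drops the boundary summand from \eqref{ros identity} and then applies the general matrix inequality $|M_1|^2|M_2|^2-(M_1:M_2)^2\ge(M_1:M_2)\,\mu^2|M_1-M_2/\mu^2|^2$ with $\mu^2=|\Id|/|\nabla^2f|$, yielding first a weighted $L^1$ bound on $|\nabla^2f-\Id/\mu^2|$, and then transfers from the $x$-dependent tensor $\Id/\mu^2$ to the constant $\Id/(n+1)$ via a trace estimate; you instead observe that since $\Delta f=1$ the Cauchy--Schwarz defect $|\Id|^2|\nabla^2f|^2-(\Delta f)^2$ is \emph{exactly} $(n+1)|\nabla^2f-\Id/(n+1)|^2$, which gives the stronger bound $\int_\Om|\nabla^2f-\Id/(n+1)|^2\le\tfrac{n}{n+1}|\Om|\,\eta(\Om)$ in $L^2$ in one step, and then Hölder immediately produces \eqref{stima L1 hessiano f} with $C(n)=\sqrt{n/(n+1)}$. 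For \eqref{stima L2 derivata normale}, the paper works through the boundary term of \eqref{ros identity}, introduces an auxiliary constant $\l$ via a geometric--mean identity, obtains an intermediate bound on $\int_{\pa\Om}(\l^2/H-|\nabla f|)^2$ in terms of $\eta(\Om)$, and then replaces $\l^2/H$ by $\l^2/H_0$ and finally by the average $|\nabla f|_{\pa\Om}$; you bypass Ros's identity altogether and simply expand the square, use $\int_{\pa\Om}|\nabla f|=|\Om|$ and $P(\Om)=(n+1)H_0|\Om|/n$ to evaluate the cross and constant terms, and bound $\int_{\pa\Om}|\nabla f|^2$ from above by combining Reilly's identity \eqref{reilly identity f constant} (with $|\nabla^2f|^2\ge1/(n+1)$) with the pointwise lower bound $H\ge H_0(1-\de(\Om))$. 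Your route is shorter and the constants are explicit; the paper's route has the (unused here) advantage of isolating an intermediate estimate that depends only on the Heintze--Karcher deficit $\eta(\Om)$, the dependence on $\de(\Om)$ entering only when $H$ is replaced by $H_0$.
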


\begin{remark}\label{remark D2u meno mu}
  {\rm If we define $\bar u=-f$ on $\Om$, $\bar u=0$ on $\R^{n+1}\setminus\Om$, then the distributional gradient $D\bar u$ and the distributional Hessian $D^2\bar u$ of $\bar u$ are given by
  \begin{eqnarray*}
  D\bar u&=&-\nabla f\,\Le^{n+1}\llcorner\Om\,,
  \\
  D^2\bar u&=&-\nabla^2f\,\Le^{n+1}\llcorner\Om+\frac{\nabla f\otimes\nabla f}{|\nabla f|}\,\H^n\llcorner\pa\Om\,,
  \end{eqnarray*}
  where $\Le^{n+1}$ is the Lebesgue measure on $\R^{n+1}$. Indeed, for every $\vphi\in C^\infty_c(\R^{n+1})$ one has
  \[
    D^2\bar u(\vphi)=\int_{\R^{n+1}}\bar u\,\pa_{ij}\vphi=-\int_{\Om}\,f\,\pa_{ij}\vphi=\int_{\Om}\,\pa_if\,\pa_{j}\vphi=
    \int_{\pa\Om}\,(\nu_{\Om})_j\,\vphi\,\pa_if-\int_{\Om}\vphi\,\pa_{ij}f\,,
  \]
  where $\nu_{\Om}=\nabla f/|\nabla f|$ on $\pa\Om$. Hence, under the assumption \eqref{Omega riscalato H0 uguale n}, \eqref{stima L1 hessiano f} and \eqref{stima L2 derivata normale} are equivalent to
  \begin{equation}
    \label{boh}
      |D^2\bar u-\mu|(\R^{n+1})\le C(n)\,\big(P(\Om)\,\de(\Om)+|\Om|\,\eta(\Om)^{1/2}\big)\le C(n,L)\,\de(\Om)^{1/2}\,,
  \end{equation}
  where $\mu$ is the Radon measure defined by
  \[
  \mu=-\frac{\Id}{n+1}\,\Le^{n+1}\llcorner\Om+\frac{\nu_\Om\otimes\nu_\Om}{n+1}\,\H^n\llcorner\pa\Om\,.
  \]
  This point of view on \eqref{stima L1 hessiano f}--\eqref{stima L2 derivata normale} is at the basis of the proof of Theorem \ref{thm compattezza} below.
  %We also note that in the elementary case when $n=0$ (which is not an interesting case in relation with Alexandrov's theorem), then \eqref{boh} easily implies that $\Om$ is a finite union of disjoint intervals whose lengths deviates from $2$ by an error $C(L)\,\de(\Om)^{1/2}$.
  }
\end{remark}

%\begin{remark}
%  {\rm Apparently one cannot control the distance of $|\nabla f|$ from a constant by the Heintze-Karcher deficit $\eta(\Om)$ only.}
%\end{remark}

\begin{proof}[Proof of Lemma \ref{lemma stime integrali}] We first prove \eqref{stima L1 hessiano f}. If $M_1,M_2\in\R^n\otimes\R^n$ with $M_1,M_2\ne 0$, then one has
  \[
  |M_1|\,|M_2|-M_1:M_2=\frac12\Big|\mu\,M_1-\frac{M_2}\mu\Big|^2\,,\qquad\mu=(|M_2|/|M_1|)^{1/2}\,,
  \]
  so that
  \[
  |M_1|^2\,|M_2|^2-(M_1:M_2)^2\ge (M_1:M_2)\,\Big|\mu\,M_1-\frac{M_2}\mu\Big|^2\,.
  \]
  By \eqref{ros identity}, setting $M_1=\nabla^2f$ (note that $\nabla^2f\ne 0$ as $\Delta f=1$ on $\Om$) and $M_2=\Id$, and noticing that $|\Id|^2=(n+1)$ and $\Delta f=\nabla^2f:\Id$, one finds
  \begin{eqnarray}\label{stima hessiano 1}
    n\,|\Om|\,\eta(\Om)\ge
    \int_\Om\,|\Id|^2\,|\nabla^2f|^2-(\Delta f)^2\ge\int_\Om\,\mu^2\,\Big|\nabla^2f-\frac{\Id}{\mu^2}\Big|^2\,,
  \end{eqnarray}
  where we have set $\mu(x)=\big(|\Id|/|\nabla^2f(x)|\big)^{1/2}$, $x\in\Om$. By \eqref{f L2 estimates} and \eqref{stima hessiano 1}, we get
  \begin{eqnarray*}
    \Big(\int_\Om\Big|\nabla^2f-\frac{\Id}{\mu^2}\Big|\Big)^2&\le& \int_\Om\mu^2\Big|\nabla^2f-\frac{\Id}{\mu^2}\Big|^2\int_\Om\frac{|\nabla^2 f|}{|\Id|}
    \\
    &
    \le& C(n)\,|\Om|^{3/2}\,\eta(\Om) \Big(\int_\Om|\nabla^2 f|^2\Big)^{1/2}\le C(n)\,|\Om|^2\,\eta(\Om)\,,
  \end{eqnarray*}
  that is
  \begin{equation}
    \label{stima hessiano 2}
      \int_\Om\Big|\nabla^2f-\frac{\Id}{\mu^2}\Big|\le C(n)\,|\Om|\,\sqrt{\eta(\Om)}\,.
  \end{equation}
  In particular, $|\tr(M_1)-\tr(M_2)|\le|M_1-M_2|$ and $\Delta f=1$ give us
  \[
  \int_\Om\Big|1-\frac{n+1}{\mu^2}\Big|\le C(n)\,|\Om|\,\sqrt{\eta(\Om)}\,,
  \]
  which leads to
  \[
  \int_\Om\Big|\frac{\Id}{n+1}-\frac{\Id}{\mu^2}\Big|=\sqrt{n+1}\,\int_\Om\Big|\frac{1}{n+1}-\frac1{\mu^2}\Big|
  \le C(n)\,|\Om|\,\sqrt{\eta(\Om)}\,.
  \]
  We prove \eqref{stima L1 hessiano f} by combining this last inequality with \eqref{stima hessiano 2}. We now prove \eqref{stima L2 derivata normale}. By \eqref{ros identity} one has
  \begin{eqnarray}\label{stima traccia 1}
    \frac{|\Om|}{n+1}\,\Big(\int_{\pa\Om}\frac{n}H\,-(n+1)|\Om|\Big)
    \ge2\,\int_{\pa\Om}|\nabla f|\,\Big(\Big(\int_{\pa\Om}\frac{1}{H}\,\int_{\pa\Om}|\nabla f|^2\,H\,\Big)^{1/2}-\int_{\pa\Om}|\nabla f|\,\Big)\,.
  \end{eqnarray}
  Since $\int_{\pa\Om}|\nabla f|=|\Om|$, if $\l>0$ is such that
  \[
    \l^4=\Big(\int_{\pa\Om}\frac{1}{H}\Big)^{-1}\,\int_{\pa\Om}|\nabla f|^2\,H\,,
  \]
  then \eqref{stima traccia 1} gives us
  \begin{eqnarray*}
    \frac{1}{2(n+1)}\,\Big(\int_{\pa\Om}\frac{n}H\,-(n+1)|\Om|\Big)
    &\ge&\Big(\int_{\pa\Om}\frac{1}{H}\,\int_{\pa\Om}|\nabla f|^2\,H\,\Big)^{1/2}-\int_{\pa\Om}|\nabla f|\,
    \\
    &=&\int_{\pa\Om} \frac{\l^2}2\,\frac{1}{H}+\frac1{2\l^2}|\nabla f|^2\,H-|\nabla f|
    \\
    &=&\int_{\pa\Om} \frac12\,\Big(\frac{\l}{\sqrt{H}}-\frac{|\nabla f|\,\sqrt{H}}{\l}\Big)^2
    \ge\int_{\pa\Om} \frac{H}{2\l^2}\,\Big(\frac{\l^2}{H}-|\nabla f|\Big)^2\,.
    \end{eqnarray*}
    Again, by \eqref{Omega H fra nmezzi e 2n},
    \begin{eqnarray*}
    \frac{H_0}{\l^2}\int_{\pa\Om} \Big(\frac{\l^2}{H}-|\nabla f|\Big)^2
    \le C(n)\,\eta(\Om)\,\int_{\pa\Om}\frac{n}H\le C(n)\, P(\Om)\,\frac{n}{H_0}\,\eta(\Om)\,.
    \end{eqnarray*}
    Finally, by \eqref{reilly identity f constant} one has $\int_{\pa\Om} H|\nabla f|^2\le|\Om|$, so that
   \begin{equation}
     \label{lambda alla quarta}
        \l^4\le C\,H_0\,\frac{|\Om|}{ P(\Om)}\le C\,,
   \end{equation}
   and thus
   \begin{eqnarray}\label{stima traccia 2}
   \int_{\pa\Om} \Big(\frac{\l^2}{H}-|\nabla f|\Big)^2\le C(n)\,\Big(\frac{n}{H_0}\Big)^2\, P(\Om)\,\eta(\Om)\,.
   \end{eqnarray}
   Now let $|\nabla f|_{\pa\Om}$ denote the average of $|\nabla f|$ on $\pa\Om$, so that $|\nabla f|_{\pa\Om}=|\Om|/ P(\Om)$. By \eqref{lambda alla quarta} and \eqref{stima traccia 2} we thus find
   \begin{eqnarray*}
   \int_{\pa\Om} \Big(|\nabla f|_{\pa\Om}-|\nabla f|\Big)^2&\le&\int_{\pa\Om} \Big(\frac{\l^2}{H_0}-|\nabla f|\Big)^2
   \le 2\int_{\pa\Om} \Big(\frac{\l^2}{H_0}-\frac{\l^2}{H}\Big)^2+2\int_{\pa\Om} \Big(\frac{\l^2}{H}-|\nabla f|\Big)^2
   \\
   &\le&C(n)\,\Big(\frac{n}{H_0}\Big)^2\, P(\Om)\,\de(\Om)\,,
   \end{eqnarray*}
   where in the last inequality we have used \eqref{eta Omega minore delta Omega} and $\de(\Om)\le 1$. We deduce \eqref{stima L2 derivata normale} by noticing that $|\nabla f|_{\pa\Om}=n/(n+1)H_0$.
\end{proof}

We now exploit a compactness argument to show that if the Alexandrov's deficit of $\Om$ is small enough, then $\Om$ can be taken arbitrarily close (in various ways) to a finite family of tangent balls of unit radii.

\begin{theorem}\label{thm compattezza}
   Given $n,L\in\N$, $n\ge 2$, $L\ge1$, $a\in(0,1]$, and $\tau>0$ there exists $c(n,L,a,\tau)>0$ with the following property. If $\Om$ satisfies \eqref{Omega C2 con H positiva}, \eqref{Omega riscalato H0 uguale n}, \eqref{Omega perimetro minore L+1}, $f$ is defined as in \eqref{f definizione} (and then extended to $0$ on $\R^{n+1}\setminus\Om$) and $\de(\Om)\le c(n,L,a,\tau)$, then there exists a finite family of disjoint unit balls $\{B_{z_j,1}\}_{j\in J}$ with $\#\,J\le L$ such that, setting
   \[
   G=\bigcup_{j\in J}B_{z_j,1}\,,
   \]
   $\pa G$ is connected (that is, each sphere $\pa B_{z_j,1}$ intersects tangentially at least another sphere $\pa B_{z_\ell,1}$ for some $\ell\ne j$) and
   \[
   |\Om\Delta G|+\hd(\pa\Om,\pa G)+|P(\Om)-P(G)|+\|f-f_G\|_{C^0(\R^{n+1})}\le\tau\,,
   \]
  where
  \[
  f_G(x)=-\sum_{j\in J}\max\Big\{\frac{1-|x-x_j|^2}{2(n+1)},0\Big\}\,,\qquad x\in\R^{n+1}\,.
  \]
  Moreover, there exist $\S\subset\pa G$ and $\phi\in C^{1,\g}(\S)$ for every $\g\in(0,1)$ such that $\pa G\setminus\S$ consists of at most $C(n)\,L$-many spherical caps whose diameters are bounded by $\tau$, and such that $(\Id+\phi\,\nu_G)(\S)\subset\pa\Om$ with
  \begin{eqnarray*}
  \|\phi\|_{C^1(\S)}+\H^n\big(\pa\Om\setminus(\Id+\phi\,\nu_G)(\S)\big)\le \tau\,,\qquad \|\phi\|_{C^{1,\g}(\S)}\le C(n,\g)\,.
  \end{eqnarray*}
\end{theorem}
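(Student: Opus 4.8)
The plan is a contradiction argument based on compactness. Suppose the statement fails for some $n,L,a,\tau$; then there is a sequence $\Om_h$ obeying \eqref{Omega C2 con H positiva}, \eqref{Omega riscalato H0 uguale n}, \eqref{Omega perimetro minore L+1} with $\de(\Om_h)\to 0$ and admitting no family $\{B_{z_j,1}\}$ with the asserted properties at level $\tau$. Since $\eta(\Om_h)\le\de(\Om_h)\to 0$ by \eqref{eta Omega minore delta Omega}, Lemma \ref{lemma stime integrali} and Remark \ref{remark D2u meno mu} give, with $\bar u_h=-f_h$ extended by $0$ off $\Om_h$, that $|D^2\bar u_h-\mu_h|(\R^{n+1})\to 0$, where $\mu_h=-\tfrac{\Id}{n+1}\Le^{n+1}\llcorner\Om_h+\tfrac{1}{n+1}\,\nu_{\Om_h}\otimes\nu_{\Om_h}\,\H^n\llcorner\pa\Om_h$, and also (from \eqref{stima L2 derivata normale}, recalling $H_0=n$) that $\int_{\pa\Om_h}\big||\nabla f_h|-\tfrac1{n+1}\big|^2\to 0$. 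Using $|B|\le|\Om_h|\le(L+1-a)|B|$, $P(\Om_h)=(n+1)|\Om_h|$ and $\diam(\Om_h)\le C(n)L$ (from \eqref{Omega diametro minore L}), after a translation and passing to a subsequence I would arrange that all $\Om_h$ lie in a fixed ball, $\mathbf 1_{\Om_h}\to\mathbf 1_\Om$ in $L^1$ for a set of finite perimeter $\Om$ (compactness in $BV$), $\H^n\llcorner\pa\Om_h\to\sigma_\infty$ weakly-$*$, and — using \eqref{f C0} and \eqref{f Lip} — $\bar u_h\to\bar u$ uniformly for a Lipschitz $\bar u\ge 0$ vanishing off $\Om$.

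The core is to identify the limit. Since $\bar u_h\to\bar u$ in $L^1$, $D^2\bar u_h\to D^2\bar u$ as matrix-valued measures; passing to the limit in $|D^2\bar u_h-\mu_h|(\R^{n+1})\to 0$ — the boundary discrepancy $\int_{\pa\Om_h}\big||\nabla f_h|-\tfrac1{n+1}\big|$ goes to $0$ by Cauchy--Schwarz and $\sup_hP(\Om_h)<\infty$ — yields $D^2\bar u=-\tfrac{\Id}{n+1}\Le^{n+1}\llcorner\Om+\tfrac1{n+1}T_\infty$ with $T_\infty\ge 0$ and $\trace T_\infty=\sigma_\infty$. From \eqref{stima L1 hessiano f} one gets $\nabla^2\bar u=-\Id/(n+1)$ on $\{\bar u>0\}$, so $\bar u$ is a strictly concave quadratic on each connected component; as $\bar u\ge 0$, $\bar u=0$ on the boundary of $\{\bar u>0\}$, and $\bar u$ is continuous, each component is an open ball $B_{x_i,r_i}$ with $\bar u=(r_i^2-|x-x_i|^2)/2(n+1)$ on it, so $\{\bar u>0\}=\bigsqcup_iB_{x_i,r_i}$. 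I would check $|\Om\setminus\{\bar u>0\}|=0$ using the comparison bound $\bar u_h\ge\dist(\cdot,\pa\Om_h)^2/2(n+1)$ on $\Om_h$ and the density bound \eqref{Omega stima densita perimetro basso}, so that $\pa^*\Om=\bigcup_i\pa B_{x_i,r_i}$ up to $\H^n$-null sets. Computing $D^2\bar u$ from the explicit form of $\bar u$ and matching singular parts gives $\sigma_\infty=\sum_i r_i\,\H^n\llcorner\pa B_{x_i,r_i}$, while lower semicontinuity of perimeter gives $\sigma_\infty\ge\H^n\llcorner\pa^*\Om=\sum_i\H^n\llcorner\pa B_{x_i,r_i}$, hence $r_i\ge 1$; and Pohozaev's identity \eqref{pohozaev} for $f_h$, in which $|\nabla f_h|^2\to1/(n+1)^2$ on $\pa\Om_h$ while $\int_{\pa\Om_h}(x\cdot\nu_{\Om_h})=(n+1)|\Om_h|$ (all boundary integrands bounded since $\diam(\Om_h)\le C(n)L$), passes to the limit and forces $\sum_i r_i^{n+3}=\sum_i r_i^{n+1}$; together with $r_i\ge 1$ this gives $r_i=1$ for all $i$. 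Since $\sum_i|B_{x_i,1}|=|\Om|<(L+1)|B|$ the family is finite with at most $L$ members; relabel it $\{B_{z_j,1}\}_{j\in J}$ and put $G=\bigcup_{j\in J}B_{z_j,1}$, noting $\#J\ge1$ because $|\Om|\ge|B|$. Reading off $\bar u$ on each ball gives $\bar u=-f_G$, hence $\|f_h-f_G\|_{C^0(\R^{n+1})}\to0$; also $|\Om_h\Delta G|\to0$, $P(\Om_h)\to P(G)$, and $\sigma_\infty=\H^n\llcorner\pa G$.

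Next I would upgrade to Hausdorff convergence, fix the combinatorics, and produce the parameterization. The density bound \eqref{Omega stima densita perimetro basso} forbids pieces of $\pa\Om_h$ at fixed positive distance from $\pa G$, while $\H^n\llcorner\pa\Om_h\to\H^n\llcorner\pa G$ to the full limit measure makes every point of $\pa G$ a limit of points of $\pa\Om_h$; together these give $\hd(\pa\Om_h,\pa G)\to0$. If the ``tangency graph'' of $G$ (balls as vertices, touching pairs as edges) were disconnected, say $G=G'\sqcup G''$ with $\dist(G',G'')=d>0$, then for $h$ large $|\Om_h\Delta G|$ and $\hd(\pa\Om_h,\pa G)$ are both $<d/4$, which forces $\Om_h$ to split into two nonempty relatively open-and-closed pieces contained in the disjoint $d/4$-neighbourhoods of $G'$ and $G''$ — impossible since $\Om_h$ is connected. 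Hence $\pa G$ is connected and (for $\#J\ge2$) every $\pa B_{z_j,1}$ is tangent to another. For the map $\psi$: each unit ball touches at most $C(n)$ others and $\#J\le L$, so $\pa G$ is smooth off a set $T$ of at most $C(n)L$ tangent points; at points of $\pa G$ bounded away from $T$ the varifolds $\pa\Om_h$ have mean curvature bounded by $2n$ and, by $\H^n\llcorner\pa\Om_h\to\H^n\llcorner\pa G$ and the local smoothness of $\pa G$, density ratios as close to $1$ as needed on a fixed small scale, so Allard's regularity theorem writes $\pa\Om_h$ there as a $C^{1,\g}$ normal graph over $\pa G$ with small $C^1$-norm and, after bootstrapping the bounded prescribed-mean-curvature equation, $C^{1,\g}$-norm $\le C(n,\g)$ for every $\g\in(0,1)$. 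Patching these graphs over $\S_h:=\pa G$ minus caps of radius $\rho_h\to0$ at the points of $T$ gives $\phi_h$ with $(\Id+\phi_h\nu_G)(\S_h)\subset\pa\Om_h$, $\|\phi_h\|_{C^1(\S_h)}\to0$, $\|\phi_h\|_{C^{1,\g}(\S_h)}\le C(n,\g)$, and $\H^n\big(\pa\Om_h\setminus(\Id+\phi_h\nu_G)(\S_h)\big)=P(\Om_h)-\H^n\big((\Id+\phi_h\nu_G)(\S_h)\big)\to P(G)-P(G)=0$. For $h$ large all the displayed quantities, and the cap diameters $2\rho_h$, fall below $\tau$, contradicting the choice of $\Om_h$.

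The step I expect to be the main obstacle is the application of Allard's theorem in the last stage: it requires that $\pa\Om_h$ approach $\pa G$ with multiplicity one (small excess) on a uniform range of scales away from the tangent points, which depends on having first identified the limit measure $\sigma_\infty$ exactly — so the estimates \eqref{stima L1 hessiano f}--\eqref{stima L2 derivata normale} of Lemma \ref{lemma stime integrali} and Pohozaev's identity must be exploited to pin the radii to $1$ — and then the local $C^{1,\g}$ graphs have to be glued into a single $\phi_h$ with all error terms, including the area of the removed caps, genuinely controlled by $\tau$. A secondary subtlety is the verification that $\Om$ coincides, up to a null set, with the positivity set $\{\bar u>0\}$ of the limit torsion potential.
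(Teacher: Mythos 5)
Your proposal is correct and follows the same overall architecture as the paper's proof: a compactness argument built on the uniform diameter bound, Remark~\ref{remark D2u meno mu} together with Lemma~\ref{lemma stime integrali} to identify the distributional Hessian of the limit $\bar u$, the conclusion that $\{\bar u>0\}$ is a disjoint union of balls, Pohozaev's identity to fix the radii, varifold compactness/density estimates for the measure-theoretic convergence, and Allard plus an implicit-function-theorem patching to produce the normal graph $\phi$. The one place where you take a genuinely different route is in pinning the radii: the paper first proves $s_j\le 1$ by an interior-ball argument (if $B_{z_j,\sqrt{s_j^2-2(n+1)\e}}\subset\Om_h$ then some boundary point has curvature $\ge n(1-\de(\Om_h))$ and $\le n/\sqrt{s_j^2-2(n+1)\e}$, forcing $s_j\le1$ as $h\to\infty$), and only afterwards obtains the Pohozaev inequality $\sum s_j^{n+3}\ge\sum s_j^{n+1}$ (an inequality, because at that stage only $\{\bar u>0\}\subset G$ modulo null is known) and the equality $|G\Delta\{\bar u>0\}|=0$ via the varifold limit. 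You instead establish $\Om=\{\bar u>0\}$ up to a null set first (using $\bar u_h\ge\dist(\cdot,\pa\Om_h)^2/2(n+1)$ and the density estimate \eqref{Omega stima densita perimetro basso}), so Pohozaev gives the exact equality $\sum r_i^{n+3}=\sum r_i^{n+1}$, and you obtain the reverse inequality $r_i\ge1$ from lower semicontinuity of the total variation $|D\mathbf 1_\Om|\le\sigma_\infty$ combined with the explicit identification of the singular part of $D^2\bar u$ across each sphere (which gives $\sigma_\infty\llcorner\pa B_{x_i,r_i}=r_i\,\H^n\llcorner\pa B_{x_i,r_i}$). Both are valid; the paper's interior-ball argument is a bit more elementary since it bypasses the singular-part computation, whereas your route keeps the reasoning entirely within the measure-theoretic framework. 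The remaining differences (deducing connectedness of $\pa G$ by a direct separation argument rather than by ``Hausdorff limit of connected compacta is connected'', and the precise ordering of the varifold step) are cosmetic, and your concluding remarks correctly identify the Allard application at a uniform small scale away from the tangency points as the delicate part, which is exactly what the paper's Lemma~\ref{lemma allard} isolates.
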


\begin{remark}\label{remark costanti esplicite}
  {\rm Notice that by Theorem \ref{thm compattezza} and since $\|\nabla f\|_{C^0(\Om)}\le\sqrt2\,\|f\|_{C^0(\Om)}^{1/2}$ thanks to \eqref{f Lip}, one can deduce that
  \begin{equation}
    \label{f stima Cn}
    \|f\|_{C^1(\Om)}\le C_0(n)\,,
  \end{equation}
  whenever $\de(\Om)\le c(n,L,a)$. (Indeed, it is enough to pick $\tau=\tau(n)$ and use $\|f-f_G\|_{C^0(\Om)}\le\tau$.) As a consequence one can choose, in the proof of Theorem \ref{thm main 1}, if working with \eqref{f C0}--\eqref{f Lip} or with \eqref{f stima Cn}. In the former case, one obtains larger powers of $L$ but explicitly computable constants $C(n)$ in the quantitative estimates of Theorem \ref{thm main 1}; in the latter case, we obtain smaller powers of $L$ but lose the ability of computing the corresponding constants $C(n)$. We shall opt for the second possibility.}
\end{remark}

\begin{proof}[Proof of Theorem \ref{thm compattezza}]
  Let us consider a sequence of sets $\{\Om_h\}_{h\in\N}$ satisfying \eqref{Omega C2 con H positiva}, \eqref{Omega riscalato H0 uguale n} and \eqref{Omega perimetro minore L+1} (with the same $L$ and $a$ for every $h\in\N$), and correspondingly define $f_h$ starting from $\Om_h$ by \eqref{f definizione}. Assuming that $\de(\Om_h)\to 0$, it will suffice to prove that, up extracting subsequences,
  \begin{equation}
    \label{tesi compattezza1}
      \lim_{h\to\infty}|\Om_h\Delta G|+\hd(\pa\Om_h,\pa G)+|P(\Om_h)-P(G)|+\|f_h-f_G\|_{C^0(\R^{n+1})}=0\,,
  \end{equation}
  where $G$ and $f_G$ are associated to a family of balls $\{B_{z_j,1}\}_{j\in J}$ as in the statement, and that there exist $\S_h\subset\pa G$ and $\phi_h\in C^{1,\g}(\S_h)$ for every $\g\in(0,1)$ such that $\pa G\setminus\S_h$ consists of at most $C(n)\,L$-many spherical caps with vanishing diameters, and $(\Id+\phi_h\,\nu_G)(\S_h)\subset\pa\Om_h$ with
  \[
  \lim_{h\to\infty}\|\phi_h\|_{C^1(\S_h)}+\H^n\big(\pa\Om_h\setminus(\Id+\phi_h\,\nu_G)(\S_h)\big)=0\,,\qquad\sup_{h\in\N}\|\phi_h\|_{C^{1,\g}(\S_h)}\le C(n,\g)\,.
  \]
  To this end, we first note that, by \eqref{Omega diametro minore L}, up to translating the sets $\Om_h$ one has
  \begin{equation}
    \label{compattezza limitati}
  \Om_h\subset B_R\,,\qquad\forall h\in\N\,,
  \end{equation}
  where $R=R(n,L)$. By \eqref{compattezza limitati} and since $P(\Om_h)\le C(n,L)$ thanks to \eqref{Omega perimetro minore L+1}, the compactness theorem for sets of finite perimeter \cite[Theorem 12.26]{maggiBOOK} implies that, up to extracting subsequences,
  \begin{equation}
    \label{Omegah va ad Omega}
    \lim_{h\to\infty}|\Om_h\Delta  G|=0\,,
  \end{equation}
  where $G\subset B_R$ is a set of finite perimeter in $\R^{n+1}$. Similarly, if we define $\bar u_h:\R^{n+1}\to\R$ by setting $\bar u_h=-f_h$ on $\Om_h$, and $\bar u_h=0$ on $\R^{n+1}\setminus\Om_h$, then by \eqref{f Lip} and by \eqref{compattezza limitati} we find that, again up to extracting subsequences,
  \begin{equation}
    \label{compattezza uh ad u}
      \lim_{h\to\infty}\|\bar u_h-\bar u\|_{C^0(\R^{n+1})}+\|\bar u_h-\bar u\|_{L^1(\R^{n+1})}=0\,,
  \end{equation}
  where $\bar u:\R^{n+1}\to[0,\infty)$ is a Lipschitz function on $\R^{n+1}$. Now, by Remark \ref{remark D2u meno mu},
  \begin{eqnarray}
  %\label{first}
%  D\bar u_h&=&-\nabla f_h\,\Le^{n+1}\llcorner\Om_h\,,
  \label{second}
  D^2\bar u_h=-\nabla^2f_h\,\Le^{n+1}\llcorner\Om_h+|\nabla f_h|\,\nu_{\Om_h}\otimes\nu_{\Om_h}\,\H^n\llcorner\pa\Om_h\,.
  \end{eqnarray}
  In particular,
  \begin{eqnarray*}
  |D^2\bar u_h|(\R^{n+1})&=&\int_{\Om_h}|\nabla^2f_h|+\int_{\pa\Om_h}|\nabla f_h|
  \\
  &\le&|\Om_h|^{1/2}\|\nabla^2 f_h\|_{L^2(\Om_h)}
  +\Big(\int_{\pa\Om_h}\frac1{H}\Big)^{1/2}\Big(\int_{\pa\Om_h}H\,|\nabla f_h|^2\Big)^{1/2}
  \\
  &\le&|\Om_h|+C(n)\,P(\Om_h)^{1/2}\,|\Om_h|^{1/2}\le C(n,L)\,,
  \end{eqnarray*}
  where in the last line we have used, in the order, \eqref{f L2 estimates}, \eqref{Omega H fra nmezzi e 2n}, \eqref{reilly identity f constant}, \eqref{Omega volume minore L+1} and \eqref{Omega perimetro minore L+1}; as a consequence,
  \begin{equation}
    \label{hessiani 1}
  D\bar u\in BV(\R^{n+1};\R^{n+1})\,,\qquad D^2\bar u_h\weak D^2\bar u\quad\mbox{as Radon measures on $\R^{n+1}$}\,.
  \end{equation}
  If $\vphi\in C^0_c(\R^{n+1})$, then by \eqref{stima L1 hessiano f} and \eqref{Omegah va ad Omega}
  \[
  (D^2\bar u_h\llcorner\Om_h)(\vphi)=-\int_{\Om_h}\,\vphi\,\nabla^2f_h\to-\frac{\Id}{n+1}\,\int_G\,\vphi\,,
  \]
  so that
  \begin{equation}
    \label{hessiani notte}
        D^2\bar u_h\llcorner\Om_h\weak -\frac{\Id}{n+1}\,\Le^{n+1}\llcorner G\quad \mbox{as Radon measures in $\R^{n+1}$}\,.
  \end{equation}
  By \eqref{hessiani 1} and \eqref{hessiani notte}, if $\mu$ denotes the weak-* limit of the Radon measures
  \[
  \mu_h=D^2\bar u_h\llcorner(\R^{n+1}\setminus\Om_h)=|\nabla f_h|\,\nu_{\Om_h}\otimes\nu_{\Om_h}\,\H^n\llcorner\pa\Om_h\,,
  \]
  then we have
  \begin{equation}
    \label{hessiani 2}
      D^2\bar u=-\frac{\Id}{n+1}\,\Le^{n+1}\llcorner G+\mu\,.
  \end{equation}
  We claim that
  \begin{equation}
    \label{inclusione uno}
    |\{\bar u>0\}\setminus G|=0\,,\qquad \spt\mu\cap\{\bar u>0\}=\emptyset\,.
  \end{equation}
  To prove the first part of \eqref{inclusione uno}, we note that if $\bar u(x)>0$, then by uniform convergence $\bar u_h\ge\bar u(x)/2$ on $B_{x,s_x}$ for every $h\ge h_x$ and for some $s_x>0$, so that $B_{x,s_x}\subset\Om_h$ for every $h\ge h_x$. This implies that $|B_{x,s_x}\setminus G|=0$ (thus the first part of \eqref{inclusione uno}), and also that $B_{x,s_x}\cap\spt\mu_h=\emptyset$: since $\spt\mu$ is contained in the set of the accumulation points of sequences $\{x_h\}_{h\in\N}$ with $x_h\in\pa\Om_h$, we have proved \eqref{inclusione uno}. By combining \eqref{hessiani 2} and \eqref{inclusione uno} we deduce that
  \begin{equation}
    \label{solito}
      D^2\bar u\llcorner\{\bar u>0\}=-\frac{\Id}{n+1}\,\Le^{n+1}\llcorner\{\bar u>0\}\,.
  \end{equation}
  Now let $\{A_j\}_{j\in J}$ denote the connected components of the open set $\{\bar u>0\}$, then by \eqref{solito}
  % The vector-field
%  \[
%  X(x)=\nabla\bar u(x)+\frac{x}{n+1}\,,
%  \]
%  has bounded variation in $\R^{n+1}$ and has null total variation on each $A_j$ thanks to \eqref{solito}. Hence $X_j$ is equivalent to a constant, that is, there exists $z_j\in\R^{n+1}$ such $X(x)=z_j/(n+1)$ for a.e. $x\in A_j$. In other words,
  we can find $z_j\in\R^{n+1}$ and $c_j\in\R$ such that
  %\[
%  \nabla \bar u(x)=-\frac{x-z_j}{n+1}\,,\qquad\mbox{for a.e. $x\in A_j$}\,,
%  \]
%  and again because of the indecomposability of $A_j$ we find $c_j\in\R$ such that
  \[
  \bar u(x)=c_j-\frac{|x-z_j|^2}{2(n+1)}\,,\qquad\forall\,x\in A_j\,,
  \]
  and since $\bar u\ge0$ it must be
  \[
  c_j\ge0\,,\qquad A_j\subset B_{z_j,s_j}\qquad\mbox{where $s_j=(2(n+1)c_j)^{1/2}$}\,,
  \]
  thus $c_j>0$ because $A_j$ is open. In conclusion,
  \[
  \{\bar u>0\}=\bigcup_{j\in J}A_j\subset\bigcup_{j\in j}B_{z_j,s_j}\subset\{\bar u>0\}\,,
  \]
  that is, $\bar u=-f_G$,
  \begin{equation}
    \label{struttura bar u}
  \bar u(x)=\sum_{j\in J}\max\Big\{\frac{s_j^2-|x-z_j|^2}{2(n+1)},0\Big\}\,,\qquad A_j=B_{z_j,s_j}\,.
  \end{equation}
  We now want to prove that $|G\Delta\{\bar u>0\}|=0$ and that $J$ is finite with $s_j=1$ for every $j\in J$. To this end we first notice that $s_j\le 1$ for every $j\in J$. Indeed, by \eqref{struttura bar u} we have that
  \[
  \{\bar u>\e\}=\bigcup_{j\in J}B_{z_j,\sqrt{(s_j^2-2(n+1)\e)_+}}\,,\qquad\forall \e>0\,,
  \]
  so that, by uniform convergence,
  \[
  \bigcup_{j\in J}B_{z_j,\sqrt{(s_j^2-2(n+1)\e)_+}}\subset\big\{u_h>\frac\e2\big\}\subset\Om_h\,,\qquad\forall h\ge h_\e\,.
  \]
  In particular, if we fix $j\in J$, pick $\e<s_j^2/2(n+1)$, and let $h\ge h_{\e,j}$, then by the previous inclusion there exists $y\in\pa\Om_h$ such that
  \[
  \frac{n}{\sqrt{s_j^2-2(n+1)\e}}\ge H_{\pa\Om_h}(y)\ge n(1-\de(\Om_h))\,,
  \]
  that is, letting $h\to\infty$, $s_j^2-2(n+1)\e\le 1$. By the arbitrariness of $\e$, we conclude that $s_j\le 1$. We now apply Pohozaev's identity \eqref{pohozaev} to $f_h$ to find
  \[
  (n+3)\int_{\R^{n+1}}\,\bar u_h=(n+3)\int_{\Om_h}\,(-f_h)=\int_{\pa\Om_h}(x\cdot\nu_{\Om_h})|\nabla f_h|^2\,,
  \]
  so that by \eqref{compattezza uh ad u}, \eqref{stima L2 derivata normale}, and the divergence theorem we find
  \[
  (n+3)\int_{\R^{n+1}}\bar u=\lim_{h\to\infty}\int_{\pa\Om_h}\frac{(x\cdot\nu_{\Om_h})}{(n+1)^2}=\frac{|G|}{n+1}\ge\frac{|B|}{n+1}\,\sum_{j\in J}s_j^{n+1}\,.
  \]
  At the same time, by \eqref{struttura bar u} and a simple computation we find
  \[
  (n+1)\,(n+3)\int_{\R^{n+1}}\bar u=|B|\,\sum_{j\in J}s_j^{n+3}\,,
  \]
  so that
  \[
  \sum_{j\in J}s_j^{n+1}(1-s_j^2)\le0\,.
  \]
  Since $s_j\in(0,1]$ for every $j\in J$, we conclude that $s_j=1$ for every $j\in J$. As a consequence, $\#\,J\le L$, because of
  \[
  (L+1-a)|B|\ge \lim_{h\to\infty}|\Om_h|=|G|\ge|\{\bar u>0\}|=\#J\,|B|\,.
  \]
  Since $J$ is finite we deduce from \eqref{struttura bar u} that
  \[
  D^2\bar u=-\frac{\Id}{n+1}\,\Le^{n+1}\llcorner\bigcup_{j\in J}B_{z_j,1}+\sum_{j\in J}\frac{\nu_{B_{z_j,1}}\otimes \nu_{B_{z_j,1}}}{n+1}\,\H^n\llcorner\pa B_{z_j,1}\,.
  \]
  By comparing this formula with \eqref{hessiani 2} we conclude that $|G\Delta\{\bar u>0\}|=0$, provided we can show that the measure $\mu$ appearing in \eqref{hessiani 2} is singular with respect to $\Le^{n+1}$, of course. To this end, it suffices to consider the multiplicity one varifolds $V_h$ associated to $\pa\Om_h$. Since (in the notation and terminology of \cite[Chapter 8]{SimonLN}) the varifolds $\{V_h\}_{h\in\N}$ have uniformly bounded masses (as $\mathbf{M}(V_h)=\H^n(\pa\Om_h)$) and uniformly bounded generalized mean curvatures (thanks to \eqref{Omega H fra nmezzi e 2n}), by \cite[Theorem 42.7, Remark 42.8]{SimonLN} there exists an integer multiplicity rectifiable $n$-varifold $V$ such that $V_h\weak V$ as varifolds. In particular, if $V$ is supported on the $n$-rectifiable set $M$, and if $\theta$ denotes the integer multiplicity of $V$, then, denoting by $\nu_M$ a Borel vector-field such that $\nu_M(x)^\perp=T_xM$ for $\H^n$-a.e. $x\in M$, we get
  \[
  \int_{M}\vphi\,\theta\,\nu_M\otimes\nu_M\,d\H^n=\lim_{h\to\infty}\int_{\pa\Om_h}\vphi\,\nu_{\Om_h}\otimes\nu_{\Om_h}\,d\H^n\,,\qquad\forall\vphi\in C^0_c(\R^{n+1})\,.
  \]
  Hence, by \eqref{stima L2 derivata normale} and by definition of $\mu_h$ and $\mu$ we conclude that
  \[
  \mu=\frac{\theta}{n+1}\,\nu_M\otimes\nu_M\,\H^n\llcorner M\,.
  \]
  As explained this shows that $|G\Delta\{\bar u>0\}|=0$, and thus, from now one we directly set
  \[
  G=\bigcup_{j\in J}B_{z_j,1}\,.
  \]
  Let us prove that $P(\Om_h)\to P(G)$. By the divergence theorem,
  \[
  \big|(n+1)|\Om_h|-P(\Om_h)\big|=\Big|\int_{\pa\Om_h}\Big(1-\frac{H_{\pa\Om_h}}n\Big)\,(x\cdot\nu_{\Om_h})\Big|\le \diam(\Om_h)\,\de(\Om_h)\,,
  \]
  while at the same time $(n+1)|G|=P(G)$, so that
  \begin{equation}
    \label{compattezza POmh meno PG stima}
      |P(\Om_h)-P(G)|\le (n+1)||\Om_h|-|G||+\diam(\Om_h)\,\de(\Om_h)\le (n+1)|\Om_h\Delta G|+\diam(\Om_h)\,\de(\Om_h)\,,
  \end{equation}
  and $P(\Om_h)\to P(G)$, as claimed. This last fact implies in particular that
  \begin{equation}
    \label{convergenza totale}
      \H^n\llcorner\pa\Om_h\weak\H^n\llcorner\pa G\quad \mbox{as Radon measures in $\R^{n+1}$}\,.
  \end{equation}
  By \eqref{convergenza totale}, \eqref{Omega stima densita perimetro basso} and a classical argument we immediately prove that $\hd(\pa\Om_h,\pa G)\to 0$.
  %$x\in\pa G$ is an accumulation point for a sequence $\{y_h\}_{h\in\N}$ with $y_h\in\pa\Om_h$, so that
%  \begin{equation}
%    \label{compattezza hd1x}
%      \lim_{h\to\infty}\max_{y\in\pa G}\dist(y,\pa \Om_h)=0\,.
%  \end{equation}
%  At the same time, if we let
%  \[
%  \vartheta(\pa\Om_h,\pa G)=P(\Om_h,B_{x_h,d_h})\,,\qquad\mbox{where}\quad d_h=\max_{x\in\pa \Om_h}\dist(x,\pa G)=\dist(x_h,\pa G)\,.
%  \]
%  then by $P(G;\ov{B}_{x_h,d_h})=0$ and by \eqref{convergenza totale} we have $\vartheta(\pa\Om_h,\pa G)\to 0$, while thanks to , we find
%  \begin{equation}
%    \label{compattezza hd da quantificare}
%      c(n)\,\min\{d_h,1\}^n\le \vartheta(\pa\Om_h,\pa G)\,.
%  \end{equation}
%  By combining \eqref{compattezza hd1x} and \eqref{compattezza hd da quantificare} we find $\hd(\pa\Om_h,\pa G)\to 0$.
  Since $\pa\Om_h$ is connected for every $h$, $\hd(\pa\Om_h,\pa G)\to 0$ implies that $\pa G$ is connected. We are thus left to prove the existence of sets $\S_h$ and maps $\phi_h$ with the claimed properties. To this end we put the proof of the theorem on hold, and recall some basic useful facts from the regularity theory for integer rectifiable varifolds.
  \end{proof}

Given $x\in\R^{n+1}$, $\nu\in S^n$ and $r>0$ we set
\begin{eqnarray*}
\C_{x,r}^\nu&=&\big\{y\in\R^{n+1}:|\p_\nu(y-x)|<r\,,|(y-x)\cdot\nu|<r\big\}\,,\qquad\C_r=\C_{0,r}^{e_n}\,,\qquad\C=\C_1\,,
\\
\D_{x,r}^\nu&=&\big\{y\in\R^{n+1}:|\p_\nu(y-x)|<r\,,(y-x)\cdot\nu=0\big\}\,,\qquad\D_r=\D_{0,r}^{e_n}\,,\qquad\D=\D_1\,,
\end{eqnarray*}
where $\p_\nu(v)=v-(v\cdot\nu)\nu$ for every $v\in\R^{n+1}$. Given $u\in C^{k,\g}(\D_r)$, it will be useful to consider, along with the standard $C^{k,\g}$-norms on $\D_r$, the scaled norms
\[
\|u\|_{C^{k,\g}(\D_r)}^*=\sum_{j=0}^k\,r^{j-1}\,\|D^j u\|_{C^0(\D_r)}+r^{k-1+\g}\,[D^ku]_{C^{0,\g}(\D_r)}\,,
\]
which are invariant by scaling in the sense that, if we set $\l_r(u)(x)=r^{-1}\,u(r\,x)$ for $x\in\D$, then
\[
\|\l_r(u)\|_{C^{k,\g}(\D)}=\|\l_r(u)\|_{C^{k,\g}(\D)}^*=\|u\|_{C^{k,\g}(\D_r)}^*\,,\qquad\forall r>0\,.
\]
We shall need the following technical lemma, which just amounts to a simple application of the implicit function theorem, and whose proof can be found in \cite[Lemma 4.3]{CiLeMaIC1}. In the statement, given $u:\D_{4r}\to\R$ with $|u|< 4r$ on $\D_{4r}$, we set
\[
\Gamma_r(u)=(\Id+u\,e_n)(\D_{4r})\subset\C_{4r}\,.
\]

\begin{lemma}\label{lemma facile}  Given $n\ge1$, $M>0$ and $\g\in[0,1]$ there exist positive constants $\k_0=\k_0(n,M,\g)<1$ and $\k_1=\k_1(n,M,\g)$ with the following property. If $u_1\in C^{2,1}(\D_{4r})$, $u_2\in C^{1,\g}(\D_{4r})$, and
\[
\max_{i=1,2}\|u_i\|_{C^1(\D_{4r})}^*\le \k_0\,,\qquad
\max\big\{\|u_1\|_{C^{2,1}(\D_{4r})}^*,\|u_2\|_{C^{1,\g}(\D_{4r})}^*\big\}\le M\,,
\]
then there exists $\psi\in C^{1,\g}(\C_{2r}\cap\Gamma_r(u_1))$ such that
\begin{gather*}
\C_r\cap\Gamma_r(u_2)\subset (\Id+\psi\nu)(\C_{2r}\cap\Gamma_r(u_1))\subset \Gamma_r(u_2)\,,
\\
\frac{\|\psi\|_{C^0(\C_{2r}\cap\Gamma_r(u_1))}}r+\|\nabla\psi\|_{C^0(\C_{2r}\cap\Gamma_r(u_1))}
+r^\g\,[\nabla\psi]_{C^{0,\g}(\C_{2r}\cap\Gamma_r(u_1))}\le \k_1\,,
\\
\frac{\|\psi\|_{C^0(\C_{2r}\cap\Gamma_r(u_1))}}{r}+\|\nabla\psi\|_{C^0(\C_{2r}\cap\Gamma_r(u_1))}\le \k_1\,\|u_1-u_2\|_{C^1(\D_{4r})}\,.
\end{gather*}
Here, $\nu\in C^{1,1}(\Gamma_r(u_1);S^n)$ is the normal unit vector field to $\Gamma_r(u_1)$ defined by
\[
  \nu(z,u_1(z))=\frac{(-\nabla u_1(z),1)}{\sqrt{1+|\nabla u_1(z)|^2}}\,,\qquad \forall z\in\D_{4r}\,.
\]
\end{lemma}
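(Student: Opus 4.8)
The plan is to obtain $\psi$ by intersecting each normal line to the graph $\Gamma_r(u_1)$ with the graph $\Gamma_r(u_2)$ and solving the resulting one-variable equation with the implicit function theorem. First I would reduce to $r=1$: applying the lemma to $v_i(x)=r^{-1}u_i(rx)$ on $\D_4$ and using the scaling invariance of the starred norms recorded above, both the hypotheses and the asserted estimates transform consistently (the quantity $r^{-1}\|\psi\|_{C^0}+\|\n\psi\|_{C^0}+r^\g[\n\psi]_{C^{0,\g}}$ at scale $r$ being the corresponding $C^{1,\g}$-type quantity at scale $1$), so one may assume $r=1$, with $u_1\in C^{2,1}(\D_4)$, $u_2\in C^{1,\g}(\D_4)$, $\|u_i\|_{C^1(\D_4)}\le C\,\k_0$, $\|u_1\|_{C^{2,1}(\D_4)}+\|u_2\|_{C^{1,\g}(\D_4)}\le C\,M$. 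Set $N(z)=\sqrt{1+|\n u_1(z)|^2}$, so that $\nu(z,u_1(z))=N(z)^{-1}(-\n u_1(z),1)$; then the point $(z,u_1(z))+t\,\nu(z,u_1(z))=\big(z-tN(z)^{-1}\n u_1(z),\,u_1(z)+tN(z)^{-1}\big)$ lies on $\Gamma_1(u_2)$ if and only if $F(z,t)=0$, where
\[
F(z,t)=u_1(z)+\frac{t}{N(z)}-u_2\!\big(z-tN(z)^{-1}\n u_1(z)\big).
\]

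Next I would solve $F=0$. One has $F(z,0)=u_1(z)-u_2(z)$, bounded by $\|u_1-u_2\|_{C^0(\D_4)}$, which is small, and
\[
\partial_t F(z,t)=\frac{1}{N(z)}\Big(1+\n u_2\big(z-tN(z)^{-1}\n u_1(z)\big)\cdot\n u_1(z)\Big)\ \ge\ \frac12
\]
once $\k_0$ is a small absolute constant; so for every $z\in\D_3$ there is a unique small $t=\psi(z)$ with $|\psi(z)|\le 2\,|u_1(z)-u_2(z)|$, and the argument $z-\psi(z)N(z)^{-1}\n u_1(z)$ remains in $\D_4$, so $F$ is legitimately evaluated. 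Since $F$ is assembled from $u_1$ — which enters only through $\n u_1$ in $\nu$, hence with one derivative of its $C^{2,1}$-regularity to spare — and from $u_2\in C^{1,\g}$, it is jointly of class $C^{1,\g}$, whence the implicit function theorem gives $\psi\in C^{1,\g}(\D_3)$ with $\n\psi=-\n_z F(\cdot,\psi)/\partial_t F(\cdot,\psi)$ and $\n_z F$ equal to $\n u_1-\n u_2$ modulo terms controlled by $M$ and $|\psi|$. Inserting the bounds on $u_1,u_2$ (and $\partial_t F\ge\tfrac12$) then yields $\|\psi\|_{C^{1,\g}(\D_2)}\le\k_1(n,M,\g)$ together with $\|\psi\|_{C^0(\D_2)}+\|\n\psi\|_{C^0(\D_2)}\le\k_1\,\|u_1-u_2\|_{C^1(\D_4)}$; pulling $\psi$ back through the $C^{1,1}$ chart $z\mapsto(z,u_1(z))$, and noting that $\C_2\cap\Gamma_1(u_1)=\{(z,u_1(z)):z\in\D_2\}$ (the height constraint being automatic from the $C^0$-smallness of $u_1$), produces $\psi\in C^{1,\g}(\C_2\cap\Gamma_1(u_1))$ obeying the two displayed norm bounds.

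Finally I would check the two inclusions. Writing $g(z)=z-\psi(z)N(z)^{-1}\n u_1(z)$, by construction $(\Id+\psi\nu)(z,u_1(z))=(g(z),u_2(g(z)))\in\Gamma_1(u_2)$; since $|g(z)-z|\le\k_0|\psi(z)|$ is tiny, $g$ is a small $C^1$-perturbation of the identity on $\D_3$, so $g(\D_2)\subset\D_4$ and hence $(\Id+\psi\nu)(\C_2\cap\Gamma_1(u_1))\subset\Gamma_1(u_2)$. A degree argument (or a contraction-mapping argument producing $g^{-1}$) gives $g(\D_2)\supset\D_1$, so any point $(w,u_2(w))$ of $\C_1\cap\Gamma_1(u_2)$, with $w\in\D_1$, equals $(g(z),u_2(g(z)))=(\Id+\psi\nu)(z,u_1(z))$ for some $z\in\D_2$, and therefore lies in $(\Id+\psi\nu)(\C_2\cap\Gamma_1(u_1))$. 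Undoing the rescaling returns the statement for general $r$. I expect the main difficulty to be not the analytic core — which is a one-dimensional implicit function theorem — but the geometric bookkeeping: calibrating $\k_0$ and $\k_1$ so that every composition ($u_2\circ g$, the pull-back to $\Gamma_1(u_1)$, the inverse of $g$) stays inside the legitimate nested domains $\D_1\subset\D_2\subset\D_4$ and $\C_1\subset\C_2\subset\C_4$, establishing both inclusions with exactly that nesting, and extracting the Hölder seminorm estimate $[\n\psi]_{C^{0,\g}}\le\k_1$ with a scale-independent constant — the last being precisely what legitimizes the reduction to $r=1$ — while the loss of one derivative on $u_1$ coming from $\n u_1$ appearing in $\nu$ is exactly why one assumes $u_1\in C^{2,1}$ whereas $C^{1,\g}$ suffices for $u_2$.
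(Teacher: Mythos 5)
Your proposal is correct. The paper does not reproduce a proof of this lemma --- it defers to \cite[Lemma 4.3]{CiLeMaIC1} --- but the one-variable implicit-function-theorem argument you give (reduce to $r=1$ via the scale invariance of the starred norms, intersect the normal line through $(z,u_1(z))$ with $\Gamma_1(u_2)$ by solving $F(z,t)=u_1(z)+t/N(z)-u_2(z-tN(z)^{-1}\nabla u_1(z))=0$ with $\partial_t F\ge 1/2$, then carry out the $C^{1,\gamma}$ bookkeeping and the two inclusions via the near-identity map $g$) is exactly the elementary route the paper indicates when it describes the lemma as ``a simple application of the implicit function theorem,'' and it is the proof strategy the cited reference employs.
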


Next, let $S$ be a $\H^n$-rectifiable set in $\R^{n+1}$ with bounded generalized mean curvature in some open set $V$, that is, there exists ${\bf H}\in L^\infty(V;\H^n\llcorner S)$ such that
\[
\int_S\,\Div^S\,X\,d\H^n=\int_S\,X\cdot{\bf H}\,d\H^n\,,\qquad\forall  X\in C^1_c(V;\R^{n+1})\,,
\]
and assume that $S=\spt(\H^n\llcorner S)$, i.e., $\H^n(S\cap B_{x,r})>0$ for every $x\in S$, $r>0$. Set
\begin{equation}
  \label{allard deficit}
  \s(S,x,r)=r\,\|{\bf H}\|_{L^\infty(B_{x,r};\H^n\llcorner S)}+\max\Big\{\frac{\H^n(S\cap B_{x,r})}{\om_n\,r^n}-1,0\Big\}\,,\qquad x\in S\,,r>0\,,
\end{equation}
where $\om_n=\H^n(B\cap\{x_1=0\})$. Then for every $\g\in(0,1)$, Allard's regularity theorem \cite{Allard} (as stated in \cite[Theorem 24.2]{SimonLN} -- see also \cite[Theorem 3.2]{DeLellisNOTESallard}) gives us positive constants $\s_0(n,\g)<1$ and $C(n,\g)$ with the following property:

\medskip

\noindent {\bf Allard's theorem}: {\it With $S$ as above, if $x\in S$ and $r>0$ are such that $B_{x,r}\cc V$ and
\begin{equation}
  \label{allard hp}
  \s(S,x,r)\le \s_0(n,\g)\,,
\end{equation}
then there exist $\nu\in S^n$ and a Lipschitz map $u:(x+\nu^\perp)\to\R$ with $u(x)=0$ such that
\[
S\cap\C_{x,\s_0\,r}^\nu=\big\{z+u(z)\nu:z\in\D_{x,\s_0\,r}^\nu\big\}\,,\qquad \|u\|_{C^{1,\g}(\D_{x,\s_0\,r}^\nu)}^*\le C(n,\g)\,\s(S,x,r)^{1/4n}\,.
\]}
(Note that this statement is a particular case of Allard's theorem in the sense that we consider only density one varifolds and we restrict to the codimension one case.) In the following we shall apply this theorem with $\g=1/4n$. Correspondingly, we simply set
\[
\s_0(n)=\s_0\Big(n,\frac1{4n}\Big)<1\,.
\]
We now prove a technical lemma
\begin{figure}
  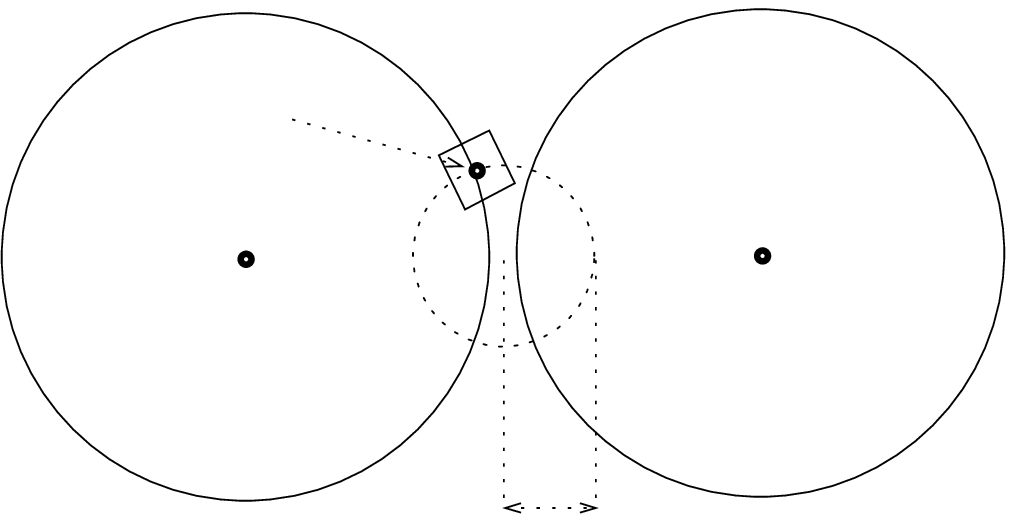\caption{{\small If $x\in\Sigma_\l\cap\pa B_{z_j,1}$, then $\pa G\cap \C_{x,c_0(n)\,\l^2}^{\mu_x}=\pa B_{z_j,1}\cap \C_{x,c_0(n)\,\l^2}^{\mu_x}$, see also \eqref{bordo G locale x}. Here $\mu_x=\nu_G(x)=\nu_{B_{z_1,1}}(x)$.}}\label{fig alex_sigmalambda}
\end{figure}
which will be useful in the proof of Theorem \ref{thm main 1} too.

\begin{lemma}
  \label{lemma allard}
  There exist positive constants $\l(n)<1$ and $c_0(n)$ with the following property. Let $\Om$ satisfy \eqref{Omega C2 con H positiva}, \eqref{Omega riscalato H0 uguale n}, and \eqref{Omega perimetro minore L+1}, let $\{B_{z_j,1}\}_{j\in J}$ be a disjoint family of unit balls, and set
  \begin{equation}
    \label{definizione Sigma lambda}
      G=\bigcup_{j\in J}B_{z_j,1}\,,\qquad\S_\l=\pa G\setminus \bigcup_{j, \ell\in J,j\ne\ell}B_{(z_j+z_\ell)/2,\l}\qquad\l>0\,.
  \end{equation}
  Assume that to each $\l\le\l(n)$ and $x\in\S_\l$ one can associate $\rho_x\in(0,1)$ and $y\in\pa\Om$ in such a way that
  \begin{gather}\label{lallard taglia rhox}
    \frac{c_0(n)\,\l^2}{2}\le \rho_x\le c_0(n)\,\l^2\,,
    \\\label{lallard taglia x-y}
    |x-y|=\dist(x,\pa \Om)\le\frac{\s_0(n)\rho_x^2}2\,,
    \\\label{lallard sigma piccolo}
    \s(\pa\Om,y,\rho_x)\le \s_0(n)\,\l(n)\,,
    \\
    \label{misteri della fede}
    |\Om\Delta G|\le C(n)\,\rho_x^{n+1}\,\sigma(\pa\Om,y,\rho_x)^{1/4n} \,.
  \end{gather}
  Then for every $\l\le\l(n)$ there exists $\psi^\l:\S_\l\to\R$ such that
  \begin{gather}\label{psilambda nomra c1gamma}
  \|\psi^\l\|_{C^{1,\g}(\S_\l)}\le C(n,\g)\,,\qquad\forall \g\in(0,1)\,,
  \\\label{psilambda C0C1}
  \l^{-2}\,\|\psi^\l\|_{C^0(\S_\l)}+\|\nabla\psi^\l\|_{C^1(\S_\l)}\le C(n)\,\max_{x\in\S_\l}\s(\pa\Om,y,\rho_x)^{1/4n}\,,
  \\\label{psilambda Nthetaintorno}
  (\Id+\psi^\l\nu_G)(\S_\l)\subset\pa\Om\,.
  \end{gather}
  \end{lemma}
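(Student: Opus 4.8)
The plan is to construct $\psi^\l$ by parametrizing, for each $\l\le\l(n)$, the part of $\pa\Om$ that sits over $\S_\l$ as a normal graph: one applies Allard's theorem to $\pa\Om$ at the points $y=y(x)$ provided by the hypotheses, at the scales $\rho_x\sim\l^2$, and then reconciles the resulting graphs of $\pa\Om$ with the sphere $\pa G$ by means of Lemma \ref{lemma facile}. I would start from a purely geometric observation fixing $c_0(n)$ and (part of) $\l(n)$: if $x\in\S_\l$ lies on $\pa B_{z_j,1}$ and $\mu_x=\nu_G(x)=x-z_j$, then for $\l(n)$ small one has $\pa G\cap\C_{x,c_0(n)\l^2}^{\mu_x}=\pa B_{z_j,1}\cap\C_{x,c_0(n)\l^2}^{\mu_x}$ (see Figure \ref{fig alex_sigmalambda}); indeed, by \eqref{definizione Sigma lambda} one has $|x-(z_j+z_\ell)/2|\ge\l$ for every $\ell\ne j$, and since the balls are disjoint, elementary spherical trigonometry forces $\dist(x,\pa B_{z_\ell,1})\ge c(n)\l^2$ for every $\ell\ne j$. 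In particular $\pa G$ is, inside that cylinder, the graph over $\D_{x,c_0\l^2}^{\mu_x}$ of a smooth function $u_1^x$ with $\|u_1^x\|_{C^{2,1}(\D_{x,c_0\l^2}^{\mu_x})}^*\le C(n)\,\l^2$, hence arbitrarily small.

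Next, fixing $x\in\S_\l$, I would apply Allard's theorem to $S=\pa\Om$ (which satisfies $\pa\Om=\spt(\H^n\llcorner\pa\Om)$ and, by \eqref{Omega H fra nmezzi e 2n}, has generalized mean curvature bounded by $2n$) at the associated point $y\in\pa\Om$ and at scale $\rho_x$: hypothesis \eqref{lallard sigma piccolo} gives $\s(\pa\Om,y,\rho_x)\le\s_0(n)$, so one obtains $\nu_y\in S^n$ and $u_2^x$ with $\pa\Om\cap\C_{y,\s_0\rho_x}^{\nu_y}=(\Id+u_2^x\,\nu_y)(\D_{y,\s_0\rho_x}^{\nu_y})$ and $\|u_2^x\|_{C^{1,1/4n}(\D_{y,\s_0\rho_x}^{\nu_y})}^*\le C(n)\,\s(\pa\Om,y,\rho_x)^{1/4n}$. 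Since $u_2^x$ is $C^1$-small and solves the graphical mean curvature equation with right-hand side bounded by $2n$, a standard interior-regularity bootstrap upgrades this to $\|u_2^x\|_{C^{1,\g}}^*\le C(n,\g)\,\s(\pa\Om,y,\rho_x)^{1/4n}$ for every $\g\in(0,1)$; this will be the source of \eqref{psilambda nomra c1gamma}.

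The crucial step is to show that the Allard hyperplane at $y$ is aligned, to order $\s(\pa\Om,y,\rho_x)^{1/4n}$, with $\pa G$ near $x$, i.e. $|\nu_y-\mu_x|\le C(n)\,\s(\pa\Om,y,\rho_x)^{1/4n}$. I would pick $t=c(n)\,\rho_x$ so small that $B_{y,t}\subset\C_{y,\s_0\rho_x}^{\nu_y}$ and, using \eqref{lallard taglia x-y} together with $\rho_x\le c_0\l^2$ (from \eqref{lallard taglia rhox}), also $B_{y,t}\subset\C_{x,c_0\l^2}^{\mu_x}$. On $B_{y,t}$, the set $\Om$ differs from the half-ball cut by the hyperplane through $y$ orthogonal to $\nu_y$ by a set of measure at most $C(n)\,\s(\pa\Om,y,\rho_x)^{1/4n}\,\rho_x^{n+1}$ (the graph $u_2^x$ deviates from that hyperplane by at most $\|u_2^x\|_{C^0}\le C(n)\,\s(\pa\Om,y,\rho_x)^{1/4n}\,\rho_x$); while, by the first step, $G\cap B_{y,t}=B_{z_j,1}\cap B_{y,t}$ differs from the half-ball cut by the hyperplane through $y$ orthogonal to $\mu_{x'}$ (where $x'$ is the nearest point of $\pa B_{z_j,1}$ to $y$, so $|x'-y|\le C(n)\rho_x^2$) by a set of measure at most $C(n)\rho_x^{n+2}$, and $|\mu_{x'}-\mu_x|\le C(n)\rho_x^2$. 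Since the symmetric difference of two half-balls of $B_{y,t}$ cut by hyperplanes through $y$ has volume at least $c(n)\,(\text{angle between the normals})\,t^{n+1}$, comparing these estimates gives $c(n)\,|\nu_y-\mu_x|\,\rho_x^{n+1}\le|\Om\Delta G|+C(n)\,\s(\pa\Om,y,\rho_x)^{1/4n}\,\rho_x^{n+1}+C(n)\,\rho_x^{n+2}$. Now \eqref{misteri della fede} bounds $|\Om\Delta G|$ by $C(n)\,\rho_x^{n+1}\,\s(\pa\Om,y,\rho_x)^{1/4n}$, while $|{\bf H}|\ge n/2$ on $\pa\Om$ (by \eqref{Omega H fra nmezzi e 2n}) and \eqref{lallard taglia rhox} give $\s(\pa\Om,y,\rho_x)\ge\rho_x\,\|{\bf H}\|_{L^\infty}\ge c(n)\,\l^2$, hence $\rho_x\le c_0\l^2\le C(n)\,\s(\pa\Om,y,\rho_x)^{1/4n}$ once $\l\le\l(n)$; dividing through by $\rho_x^{n+1}$ proves the claim.

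Finally, with $|\nu_y-\mu_x|\le C(n)\,\s(\pa\Om,y,\rho_x)^{1/4n}$ small, I would re-express $\pa\Om$ over $\D_{x,c(n)\rho_x}^{\mu_x}$ as the graph of a function $\tilde u_2^x$ — the change of chart is controlled because both $|\nu_y-\mu_x|$ and $|x-y|$ are small — with $\|\tilde u_2^x\|_{C^1}^*\le C(n)\,\s(\pa\Om,y,\rho_x)^{1/4n}$ and $\|\tilde u_2^x\|_{C^{1,\g}}^*\le C(n,\g)$. Then Lemma \ref{lemma facile}, applied with $u_1=u_1^x$, $u_2=\tilde u_2^x$ and $r\sim\rho_x$ (its $C^1$-smallness hypotheses being met for $\l\le\l(n)$), produces $\psi_x$ on a neighborhood of $x$ in $\pa G$ with $(\Id+\psi_x\,\nu_G)(\cdot)\subset\pa\Om$, with $\|\psi_x\|_{C^{1,\g}}\le C(n,\g)$, and with $\rho_x^{-1}\|\psi_x\|_{C^0}+\|\nabla\psi_x\|_{C^0}\le C(n)\,\|u_1^x-\tilde u_2^x\|_{C^1}\le C(n)\,\s(\pa\Om,y,\rho_x)^{1/4n}$. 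Covering $\S_\l$ by finitely many such neighborhoods (of comparable size $\sim\l^2$, hence with uniformly bounded overlap), observing that the local functions agree on overlaps by uniqueness of the normal-graph representation of $\pa\Om$ over $\pa G$, and gluing, one obtains $\psi^\l:\S_\l\to\R$: \eqref{psilambda Nthetaintorno} holds by construction, \eqref{psilambda nomra c1gamma} was obtained above, and \eqref{psilambda C0C1} follows from the local bounds together with $\rho_x\ge c_0\l^2/2$. The main obstacle is the alignment estimate of the third paragraph: Allard's theorem alone only tells us that $\pa\Om$ is flat near $y$, and it is only through the volume bound \eqref{misteri della fede}, coupled with the elementary but essential fact that $\s(\pa\Om,y,\rho_x)\gtrsim\rho_x$ (which uses $H>0$), that one can both tie this flatness to the geometry of $\pa G$ and absorb the $O(\rho_x^{n+2})$ sphere-versus-hyperplane error into $\s(\pa\Om,y,\rho_x)^{1/4n}\,\rho_x^{n+1}$.
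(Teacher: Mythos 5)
Your proposal is correct and follows essentially the same route as the paper's proof: you recover $\pa G$ locally as a small graph over the tangent plane at $x$, invoke Allard's theorem for $\pa\Om$ at the nearby point $y$ at scale $\rho_x$, derive the key alignment estimate $|\nu_y-\mu_x|\lesssim\s(\pa\Om,y,\rho_x)^{1/4n}$ by comparing $\Om$ and $G$ to half-regions cut by the two candidate tangent hyperplanes and absorbing the errors via \eqref{misteri della fede} and $\s\gtrsim\rho_x$, and then conclude with Lemma \ref{lemma facile} and a gluing argument. The only cosmetic differences are that the paper compares half-cylinders rather than half-balls, and obtains the $C^{1,\g}$ bound \eqref{psilambda nomra c1gamma} by elliptic regularity for the glued function $\psi^\l$ rather than at the level of the local Allard charts (where, incidentally, you should only claim $\|u_2^x\|_{C^{1,\g}}^*\le C(n,\g)$, not $C(n,\g)\,\s^{1/4n}$, since the graphical mean curvature is of order $n$; this has no bearing on the argument, as only boundedness is used).
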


\begin{remark}\label{remark geometry of Sigma lambda}
  {\rm Note that we do not assume $\pa G$ to be connected. In other words, the balls $B_{z_j,1}$ need not to be tangent, although the may be arbitrarily close or even mutually tangent, and actually this last case will somehow be the ``worst'' case to keep in mind. We also note that for $\l(n)$ small enough, if $\l\le\l(n)$, then $\pa G\setminus\S_\l$ consists of finitely many (precisely, at most $C(n)\#\,J$-many) spherical caps of diameters bounded by $C\,\l$.}
\end{remark}

\begin{proof}[Proof of Lemma \ref{lemma allard}] We {\it claim} that for every $\l\le\l(n)$ and $x\in\S_\l$ there exists
\[
\psi_x\in C^{1,1/4n}(\C_{x,\s_0\,\rho_x/4}^{\mu_x}\cap\pa G)
\]
such that
\begin{gather}\nonumber
\C_{x,\s_0\,\rho_x/8}^{\mu_x}\cap\pa\Om
\subset (\Id+\psi_x\nu_G)(\C_{x,\s_0\,\rho_x/4}^{\mu_x}\cap\pa G)\subset \C_{x,\s_0\,\rho_x/2}^{\mu_x}\cap\pa\Om\,,
\\\label{epici 3 x}
\|\psi_x\|_{C^{1,1/4n}(\C_{x,\s_0\,\rho_x/4}^{\mu_x}\cap\pa G)}\le C(n)\,,
\\\nonumber
\rho_x^{-1}\,\|\psi_x\|_{C^0(\C_{x,\s_0\,\rho_x/4}^{\mu_x}\cap\pa G)}+\|\nabla\psi_x\|_{C^0(\C_{x,\s_0\,\rho_x/4}^{\mu_x}\cap\pa G)}\le C(n)\,\s(\pa\Om,y,\rho_x)^{1/4n}\,.
\end{gather}
Postponing for the moment the proof of the claim, let us show how it allows one to complete the proof of the lemma. Indeed, \eqref{epici 3 x} implies that, for every $x_1,x_2\in\S_\l$, $\psi_{x_1}=\psi_{x_2}$  on the intersection of their respective domains of definition. Then, by setting
\[
\psi^\l(z)=\psi_x(z)\,,\qquad\forall z\in \C_{x,\s_0\,\rho_x/4}^{\mu_x}\cap\pa G\,,x\in\S_\l\,,
\]
one defines a function $\psi^\l\in C^{1,1/4n}(\S_\l)$ such that \eqref{psilambda nomra c1gamma}, \eqref{psilambda C0C1} and \eqref{psilambda Nthetaintorno} hold. Moreover, \eqref{psilambda nomra c1gamma} follows from elliptic regularity, as $\|\psi^\l\|_{C^{1,1/4n}(\S_\l)}\le C(n)$ and the mean curvature of the graph of $\psi^\l$ over $\S_\l$ is the mean curvature of $\Om$, and thus it is bounded and continuous.

We are thus left to prove our claim. To this end we consider $r(n)>0$ such that
  \begin{eqnarray}\label{rn definizione x}
  \H^n(B_{z,s}\cap \pa B)\le (1+C(n)\,s^2)\om_n\,s^n\,,\qquad \forall z\in\pa B\,,\forall s<r(n)\,,
  \\
  \label{rn uniform x}
  \sup\big\{|(p-z)\cdot\nu_B(z)|:p\in B_{z,s}\cap\pa B\big\}\le C(n)\,s^2\,,\qquad \forall z\in\pa B\,,\forall s<r(n)\,,
\end{eqnarray}
we fix $x\in\S_\l$, $\l\le\l(n)$, let $y$ and $\rho_x$ be as in the statement, and set
\[
\mu_x=\nu_G(x)=\nu_{B_{z_j,1}}(x)\qquad\mbox{for the unique $j\in J$ such that $x\in\pa B_{z_j,1}$}\,.
\]
If $c_0(n)\,\l(n)^2\le r(n)$, then by definition of $\mu_x$ there exist $\hat C(n)$ and a Lipschitz map $w_x:(x+\mu_x^\perp)\to\R$ such that
\begin{equation}
\label{bordo G locale x}
  \begin{split}
    \pa G\cap\C_{x,c_0(n)\,\l^2}^{\mu_x}=\pa B_{z_j,1}\cap\C_{x,c_0(n)\,\l^2}^{\mu_x}=\big\{z+w_x(z)\mu_x:z\in\D_{x,c_0(n)\,\l^2}^{\mu_x}\big\}\,,
\\
\|w_x\|_{C^{2,1}(\D_{x,c_0(n)\,\l^2}^{\mu_x})}\le \hat C(n)\,,\quad \|w_x\|_{C^1(\D_{x,r}^{\mu_x})}^*\le \hat C(n)\,r\,,\quad\forall r\le c_0(n)\,\l^2\,.
  \end{split}
\end{equation}
By \eqref{lallard sigma piccolo} and by Allard's theorem, there exist $\nu_x\in S^n$ and a Lipschitz map $u_x:(y+\nu_x^\perp)\to\R$ such that $u_x(y)=0$,
\begin{equation}
  \begin{split}
    \label{stima allard x}
&\pa\Om\cap\C_{y,\s_0\,\rho_x}^{\nu_x}=\big\{z+u_x(z)\nu_x:z\in\D_{y,\s_0\,\rho_x}^{\nu_x}\big\}\,,
\\
&\|u_x\|_{C^{1,1/4n}(\D_{y,\s_0\,\rho_x}^{\nu_x})}^*\le C(n)\,\s(\pa\Om,y,\rho_x)^{1/4n}\,.
  \end{split}
\end{equation}
Now we let
\begin{eqnarray*}
  K_y=\big\{z\in \C_{y,\s_0\,\rho_x}^{\nu_x}:(z-y)\cdot\nu_x\le 0\big\}\,,\quad
  K_x=\big\{z\in \C_{x,\s_0\,\rho_x}^{\mu_x}:(z-x)\cdot\mu_x\le 0\big\}\,.
\end{eqnarray*}
Up to switching $\nu_x$ with $-\nu_x$, and since $|u_x|\le C(n)\,\rho_x\,\s(\pa\Om,y,\rho_x)^{1/4n}$ on $\D_{y,\s_0\,\rho_x}^{\nu_x}$  by \eqref{stima allard x} we can assume that
\begin{equation}
  \label{stima K x}
  \big|(K_y\Delta\Om)\cap \C_{y,\s_0\,\rho_x}^{\nu_x}\big|\le C(n)\,\rho_x^{n+1}\,\s(\pa\Om,y,\rho_x)^{1/4n}\,.
\end{equation}
Similarly, by \eqref{bordo G locale x} we have $|w_x|\le C(n)\,\rho_x^2$ on $\D_{x,\s_0\,\rho_x}^{\mu_x}$, and thus
\begin{equation}
  \label{stima K xx}
  \big|(K_x\Delta G)\cap \C_{x,\s_0\,\rho_x}^{\mu_x}\big|\le C(n)\,\rho_x^{n+2}\le  C(n)\,\rho_x^{n+1}\,\s(\pa\Om,y,\rho_x)^{1/4n}\,,
\end{equation}
as \eqref{allard deficit} and \eqref{Omega H fra nmezzi e 2n} imply $\rho_x\le C(n)\s(\pa\Om,y,\rho_x)$. Since $|y-x|\le\s_0\,\rho_x/2$ by \eqref{lallard taglia x-y} and $\rho_x<1$, we find
\[
B_{x,\s_0\,\rho_x/2}\subset B_{y,\s_0\,\rho_x}\subset \C_{y,\s_0\,\rho_x}^{\nu_x}\,,\qquad\mbox{as well as $B_{x,\s_0\,\rho_x/2}\subset
\C_{x,\s_0\,\rho_x}^{\mu_x}$ of course}\,,
\]
and thus, by \eqref{stima K x} and \eqref{stima K xx},
\begin{eqnarray}\nonumber
  |\Om\Delta G|\ge |(\Om\Delta G)\cap B_{x,\s_0\,\rho_x/2}|
  &\ge&|(K_x\Delta K_y)\cap B_{x,\s_0\,\rho_x/2}|-C(n)\,\rho_x^{n+1}\,\s(\pa\Om,y,\rho_x)^{1/4n}
  \\\label{stima cilindri 0x}
  &\ge&\big|\big((K_y+x-y)\Delta K_x\big)\cap B_{x,\s_0\,\rho_x/2}\big|
  \\\nonumber
  &&-C(n)\,\rho_x^{n+1}\,\s(\pa\Om,y,\rho_x)^{1/4n}
  -\big|\big(K_y+x-y\big)\Delta K_y\big|\,.
\end{eqnarray}
On the one hand, for every $z\in\R^{n+1}$, $r>0$ and $\nu,\nu'\in S^n$ one has
\begin{equation}
  \label{stima cilindri 1x}
  \Big|\Big(\big\{p\in\C_{z,r}^\nu:(p-z)\cdot\nu\le 0\big\}\Delta
  \big\{p\in\C_{z,r}^{\nu'}:(p-z)\cdot\nu'\le 0\big\}\Big)\cap B_{z,r/2}\Big|\ge c(n)\,|\nu-\nu'|\,r^{n+1}\,;
\end{equation}
on the other hand, again by $|y-x|\le\s_0\,\rho_x^2/2$,
\begin{equation}
  \label{stima cilindri 2x}
|K_y\Delta(x-y+K_y)|\le C(n)\,P(K_y)\,|y-x|\le C(n)\,\rho_x^n\,|y-x|\le  C(n)\,\rho_x^{n+2}\,.
\end{equation}
By \eqref{stima cilindri 0x}, \eqref{stima cilindri 1x}, and \eqref{stima cilindri 2x} we conclude that
\[
c(n)\,|\nu_x-\mu_x|\,\rho_x^{n+1}\le C(n)\,\,\rho_x^{n+1}\,\sigma(\pa\Om,y,\rho_x)^{1/4n}+|\Om\Delta G|\,,
\]
so that \eqref{misteri della fede} and \eqref{lallard sigma piccolo} give us
\begin{equation}
  \label{normali vicinex}
  |\nu_x-\mu_x|\le C(n)\,\s(\pa\Om,y,\rho_x)^{1/4n}\,.
\end{equation}
By \eqref{lallard taglia x-y}, \eqref{stima allard x}, and \eqref{normali vicinex}, provided $\l(n)$ is small enough, there exist a constant $C_*(n)$ and a Lipschitz map $v_x:(x+\mu_x^\perp)\to\R$ such that
\begin{gather*}
\pa\Om\cap\C_{x,\s_0\,\rho_x/2}^{\mu_x}=\big\{z+v_x(z)\mu_x:z\in\D_{x,\s_0\,\rho_x/2}^{\mu_x}\big\}\,,
  \\
  \|v_x\|_{C^{1,1/4n}(\D_{x,\s_0\,\rho_x/2}^{\mu_x})}\le C_*(n)\,,\qquad
   \|v_x\|_{C^1(\D_{x,\s_0\,\rho_x/2}^{\mu_x})}^*\le C_*(n)\,\s(\pa\Om,y,\rho_x)^{1/4n}\,.
\end{gather*}
By this last property, by \eqref{lallard taglia rhox}, and by \eqref{bordo G locale x} we can apply Lemma \ref{lemma facile} into the cylinder $\C_{x,\s_0\,\rho_x/2}^{\mu_x}$: indeed, setting by a rigid motion $x=0$ and $\mu_x=0$, and choosing
\[
4\,r=\frac{\s_0\rho_x}2\,,\qquad u_1=w_x\,,\qquad u_2=v_x\,,\qquad \g=\frac1{4n}\,,\qquad M=\max\{\hat C(n),C_*(n)\}\,,
\]
we find that
\begin{eqnarray*}
\max_{i=1,2}\|u_i\|_{C^1(\D_{4r})}^*&=&\max\big\{\|w_x\|_{C^1(\D_{x,\s_0\,\rho_x/2}^{\mu_x})}^*,\|v_x\|_{C^1(\D_{x,\s_0\,\rho_x/2}^{\mu_x})}^*\big\}
\\
&\le&\max\big\{\hat{C}(n)\frac{\s_0\,\rho_x}2,C_*(n)\,\s(\pa\Om,y,\rho_x)^{1/4n}\big\}\le C(n)\,\l(n)^{1/4n}\le\k_0\big(n,\g,M\big)\,,
\end{eqnarray*}
provided $\l(n)$ is small enough. By Lemma \ref{lemma facile}, there exists $\psi_x\in C^{1,1/4n}(\C_{x,\s_0\,\rho_x/4}^{\mu_x}\cap\pa G)$ satisfying \eqref{epici 3 x}.
\end{proof}

\begin{proof}
  [Proof of Theorem \ref{thm compattezza}, conclusion] We now conclude the proof of Theorem \ref{thm compattezza}. Let us recall the situation we left: we have $\{\Om_h\}_{h\in\N}$ satisfying \eqref{Omega C2 con H positiva}, \eqref{Omega riscalato H0 uguale n} and \eqref{Omega perimetro minore L+1} (with the same $L$ and $a$) and
  \begin{equation}
    \label{tesi compattezza1 finale}
      \lim_{h\to\infty}|\Om_h\Delta G|+\hd(\pa\Om_h,\pa G)+|P(\Om_h)-P(G)|=0\,,
  \end{equation}
  where $G$ is the union over a finite family of disjoint unit balls $\{B_{z_j,1}\}_{j\in J}$. To complete the proof of the theorem, we need to prove the existence of $\S_h\subset\pa G$ and of $\phi_h:\S_h\to\R$ such that $\pa G\setminus\S_h$ consists of at most $C(n)\,L$-many spherical caps with vanishing diameters, $(\Id+\phi_h\,\nu_G)(\S_h)\subset\pa\Om_h$, and
  \begin{eqnarray}\label{tesi compattezza2}
  \lim_{h\to\infty}\|\phi_h\|_{C^1(\S_h)}+\H^n\big(\pa\Om_h\setminus(\Id+\phi_h\,\nu_G)(\S_h)\big)=0\,,\qquad\sup_{h\in\N}\|\phi_h\|_{C^{1,\g}(\S_h)}\le C(n,\g)\,,
  \end{eqnarray}
  for every $\g\in(0,1)$. To this end, we want to apply Lemma \ref{lemma allard} to $\Om=\Om_h$. Let us fix $\l\le\l(n)$, define $\S_\l$ as in \eqref{definizione Sigma lambda}, and for every $x\in\S_\l$ let us set
  \[
  \rho_x=c_0(n)\,\l^2\,,
  \]
  so that \eqref{lallard taglia rhox} holds trivially. Let us now fix $x\in\S_\l$, and consider $y_h\in\pa\Om_h$ such that $|x-y_h|=\dist(x,\pa \Om_h)$. By $\hd(\pa\Om_h,\pa G)\to 0$ we have
  \[
  |x-y_h|\le\hd(\pa\Om_h,\pa G)\le \frac{\s_0(n)c_0(n)^2\,\l^4}2\,,\qquad\forall h\ge h_\l\,,
  \]
  provided $h_\l\in\N$ is large enough; in particular, \eqref{lallard taglia x-y} holds for $h$ large enough. Next, we notice that by \eqref{convergenza totale} and $|y_h-x|\to 0$ we have
  \[
  \limsup_{h\to\infty}P(\Om_h;B_{y_h,\rho_x})\le P(G;B_{x,\rho_x})\,,
  \]
  so that \eqref{Omega H fra nmezzi e 2n}, the definition of $\rho_x$, and \eqref{rn definizione x} (applied with $s=c_0(n)\,\l^2\le r(n)$) give us
  \begin{eqnarray}\nonumber
  \limsup_{h\to\infty}\s(\pa\Om_h,y_h,\rho_x)&\le&2n\rho_x+\frac{P(G;B_{x,\rho_x})}{\om_n\,\rho_x^n}-1
  \le C(n)\,\Big(\l^2+\frac{P(G;B_{x,c_0(n)\,\l^2})}{\om_n\,(c_0(n)\,\l^2)^n}-1\Big)
  \\\label{first bound}
  &\le & C(n)\,\l^2\le \frac{\s_0(n)\,\l}2\le \frac{\s_0(n)\,\l(n)}2\,;
  \end{eqnarray}
  in particular, \eqref{lallard sigma piccolo} holds for $h$ large enough. Finally, \eqref{Omega H fra nmezzi e 2n} and \eqref{allard deficit} imply $\sigma(\pa\Om_h,y_h,\rho_x)\ge (n/2)\rho_x\ge c(n)\l^2$, thus up to take $h_\l$ large enough to entail $|\Om_h\Delta G|\le C(n)\,\l^{n+3}$ we find that \eqref{misteri della fede} holds. By Lemma \ref{lemma allard} we conclude that for every $\l\le\l(n)$ there exists $\{\psi^\l_h\}_{h\ge h_\l}\subset C^{1,\g}(\S_\l)$ for every $\g\in(0,1)$ such that
  \begin{equation}
  \label{psilambda C0C1 h}
  (\Id+\psi_h^\l\nu_G)(\S_\l)\subset\pa\Om_h\,,\qquad \|\psi_h^\l\|_{C^{1,\g}(\S_\l)}\le C(n,\g)\,,\qquad \|\psi_h^\l\|_{C^1(\S_\l)}\le C(n)\,\l^{1/2n}\,,
  \end{equation}
  where in proving the last bound we have also taken into account the first inequality in \eqref{first bound}. Since
  \begin{eqnarray*}
    \H^n\big(\pa\Om_h\setminus(\Id+\psi_h^\l\,\nu_G)(\S_\l)\big)&\le&P(\Om_h)-P(G)+\H^n(\pa G\setminus\S_\l)\\
    &&+\big|\H^n(\S_\l)-\H^n\big((\Id+\psi_h^\l\,\nu_G)(\S_\l)\big)\big|\,,
  \end{eqnarray*}
  by $P(\Om_h)\to P(G)$ and by $\|\psi_h^\l\|_{C^1(\S_\l)}\le C(n)\,\l^{1/2n}$ we find that
  \[
  \limsup_{h\to\infty}\H^n\big(\pa\Om_h\setminus(\Id+\psi_h^\l\,\nu_G)(\S_\l)\big)\le \H^n(\pa G\setminus\S_\l)+C(n)\,\H^n(\S_\l)\,\l^{1/2n}\,.
  \]
  We complete the proof of the theorem by first considering any $\l_h\to 0$, and then by setting $\phi_h=\psi_{k(h)}^{\l_h}$ for a properly chosen $k(h)\to\infty$.
\end{proof}

We now begin the proof of Theorem \ref{thm main 1}, that is, we consider the problem of turning the qualitative information provided in Theorem \ref{thm compattezza} into quantitative estimates in terms of $\de(\Om)$. Recall that, as in the introduction, we set
\[
\a=\frac{1}{2(n+2)}\,.
\]

\begin{proof}[Proof of Theorem \ref{thm main 1}] {\it Step one}: With $f$ as in \eqref{f definizione}, let us set
\begin{equation}
 \label{def Omega eps and feps}
 \e=|\Om|^{1/(n+1)}\,\eta(\Om)^\a\,,\qquad
  \Om_\e=\{x\in\Om:\dist(x,\pa\Om)>\e\}\,,\qquad f_\e=f\star w_\e\,,
\end{equation}
where $w_\e(x)=\e^{-{(n+1)}}\,w(x/\e)$ for $w\in C^\infty_c(B)$ with $w\ge 0$, $w(x)=w(-x)$ for every $x\in\R^{n+1}$, and $\int_{\R^{n+1}}w=1$. We claim that, if $C_0(n)$ is the constant appearing in \eqref{f stima Cn}, then
  \begin{eqnarray}\label{feps nabla limitato}
  \|\nabla f_\e\|_{C^0(\Om_\e)}&\le& C_0(n)\,,
  \\
  \label{feps f in C0}
  \|f_\e-f\|_{C^0(\Om_\e)}&\le& C_0(n)\,\e\,,
  \\
  \label{D2feps identita C0}
  \big\|\nabla^2f_\e-\frac{\Id}{n+1}\big\|_{C^0(\Om_\e)}&\le& C(n)\,\eta(\Om)^\a\,,
  \end{eqnarray}
  \vspace{-0.5cm}
  \begin{eqnarray}
  \label{D2feps positivo}
  \|\nabla^2f_\e\|_{C^0(\Om_\e)}\le C(n)\,,\qquad \nabla^2f_\e(x)\ge\frac{\Id}{2(n+1)}\,,\qquad\forall x\in\Om_\e\,.
  \end{eqnarray}
 Indeed, \eqref{feps nabla limitato} and \eqref{feps f in C0} are obvious. If $x\in\Om_\e$, then \eqref{D2feps identita C0} follows by \eqref{stima L1 hessiano f} and \eqref{alpha},
\begin{eqnarray*}
  \big|\nabla^2f_\e(x)-\frac{\Id}{n+1}\big|\le\frac{\|w\|_{C^0(\R^{n+1})}}{\e^{n+1}}\,\int_\Om\Big|\nabla^2 f-\frac{\Id}{n+1}\Big|
\le C(n)\,\eta(\Om)^{(1/2)-\a(n+1)}\,.
\end{eqnarray*}
Finally, \eqref{D2feps positivo} follows from \eqref{D2feps identita C0} and \eqref{eta Omega minore delta Omega} provided $\de(\Om)\le c(n)$ for $c(n)$ small enough.

\medskip

\noindent {\it Step two}: Next we define
\begin{eqnarray}\label{def K and rho}
\rho=C_0(n)\,\e=C_0(n)\,|\Om|^{1/(n+1)}\,\eta(\Om)^\a\,,\qquad A=\{f_\e<-3\rho\}\,.
\end{eqnarray}
We claim that
\begin{equation}
\label{inclusioni livelli}
  \{f<-4\rho\}\subset A\subset\{f<-2\rho\}\,,\qquad \{f<-\rho\}\subset\Om_\e\,,
\end{equation}
and that if $\{A_i\}_{i\in I}$ are the connected components of $A$, then each $A_i$ is convex and there exist $x_i\in A_i$ and $0<r_1^i\le r_2^i<\infty$ such that
  \begin{equation}
    \label{stima rho1 su rho2}
      B_{x_i, r_1^i}\subset A_i\subset B_{x_i, r_2^i}\,,\qquad 1\ge\frac{ r_1^i}{ r_2^i}\ge1-C(n)\,\eta(\Om)^\a\ge\frac12\,,
  \end{equation}
  \begin{equation}
    \label{luis}
       r_1^i\le 1+ C_1\,\de(\Om)\,,\qquad r_2^i\le  1+ C(n)\,\de(\Om)^\a\,,
  \end{equation}
  \begin{equation}\label{poho1.3}
  \int_{\Om\setminus\bigcup_{i\in I}B_{x_i, r_1^i}}(-f)\le C(n)\,|\Om|^{(n+2)/(n+1)}\,\eta(\Om)^\a\,.
  \end{equation}
Let us first prove that $\{f<-\rho\}\subset \Om_\e$: indeed, if $f(x)<-\rho$ but there exists $y\in\pa\Om$ with $|y-x|=\dist(x,\pa\Om)\le\e$, then the segment joining $x$ to $y$ is contained in $\Om$, and thus by \eqref{f stima Cn}
\[
-\rho>f(x)\ge f(y)-C_0(n)\,|x-y|\ge f(y)-\rho=-\rho\,,
\]
a contradiction. Similarly, our choice of $\rho$ and \eqref{feps f in C0} imply the other inclusions in \eqref{inclusioni livelli}. By \eqref{D2feps positivo}, $A=\{f_\e<-3\rho\}$ is an open set with convex connected components $\{A_i\}_{i\in I}$. Let $x_i\in A_i$ be such that $f_\e(x_i)\le f_\e(x)$ for every $x\in A_i$, and define
\[
g_i(x)=\frac{|x-x_i|^2}{2(n+1)}+f_\e(x_i)\,,\qquad x\in\R^{n+1}\,.
\]
By \eqref{D2feps identita C0},
\begin{equation}
  \label{feps gi D2 stima C0 Ai}
\big|\nabla^2(f_\e-g_i)(x)\big|\le C(n)\,\eta(\Om)^\a\,,\qquad\forall x\in \Om_\e\,,
\end{equation}
so that, by the convexity of $A_i$, $g_i(x_i)=f_\e(x_i)$, and $\nabla g_i(x_i)=\nabla f_\e(x_i)=0$,
\begin{eqnarray}\label{feps gi gradienti stima C0 in Ai}
  |\nabla f_\e(x)-\nabla g_i(x)|\le C(n)\,\eta(\Om)^\a\,|x-x_i|\,,&&\qquad\forall x\in A_i\,,
  \\\label{feps gi stima C0 in Ai}
  |f_\e(x)-g_i(x)|\le C(n)\,\eta(\Om)^\a\,|x-x_i|^2\,,&&\qquad\forall x\in A_i\,.
\end{eqnarray}
Let now $ r_1^i\le r_2^i$ be such that
\[
r_1^i=\sup\big\{r>0:B_{x_i,r}\subset A_i\big\}\,, \qquad r_2^i=\inf\big\{r>0: A_i\subset B_{x_i,r}\big\}\,.
\]
By definition there are $\nu_1,\nu_2\in S^n$ such that $x_i+ r_1^i\nu_1\,,x_i+ r_2^i\nu_2\in\pa A_i\subset\{f_\e=-3\rho\}$: hence,
\begin{eqnarray}\label{pointsss}
  0&=&f_\e(x_i+ r_2^i\nu_2)-f_\e(x_i+ r_1^i\nu_1)
  \\\nonumber
  &\ge&g_i(x_i+ r_2^i\nu_2)-g_i(x_i+ r_1^i\nu_1)-C(n)\,\eta(\Om)^\a\,(( r_1^i)^2+( r_2^i)^2)
  \\\nonumber
  &=& \frac{( r_2^i)^2-( r_1^i)^2}{2(n+1)}-C(n)\,\eta(\Om)^\a\,(( r_1^i)^2+( r_2^i)^2)\,,
\end{eqnarray}
that is, setting $t= r_2^i/ r_1^i\ge 1$,
\[
C(n)\,\eta(\Om)^\a\ge\frac{( r_2^i)^2-( r_1^i)^2}{( r_1^i)^2+( r_2^i)^2}=\frac{t^2-1}{t^2+1}\ge \frac{t-1}{t}=1-\frac{ r_1^i}{ r_2^i}\,,
\]
thus proving \eqref{stima rho1 su rho2}. The second inequality in \eqref{luis} follows from \eqref{stima rho1 su rho2} and \eqref{eta Omega minore delta Omega}, while the first one is proved by noticing that $B_{x_i, r_1^i}\subset\Om$, and thus for some $x_0\in\pa\Om$  one has
$n/r_1^i\ge H(x_0)\ge H_0(1-\de(\Om))=n(1-\de(\Om))$. Finally, thanks to \eqref{inclusioni livelli} and \eqref{Omega volume minore L+1},
\begin{equation}
  \label{poho1.2}
  \int_{\Om\setminus A}(-f)\le \int_{\{-f\le 4\rho\}}(-f)\le 4\rho|\Om|\le C(n)\,|\Om|^{(n+2)/(n+1)}\,\eta(\Om)^\a\,,
\end{equation}
while \eqref{stima rho1 su rho2}, $t^{n+1}-1\le 2^{n+1}\,(t-1)$ for $t=r_2^i/r_1^i\in[1,2]$, and \eqref{f stima Cn} give us
\begin{eqnarray*}
\int_{A\setminus\bigcup_{i\in I}B_{x_i, r_1^i}}(-f)&\le& \|f\|_{C^0(\Om)}\,|B|\,\sum_{i\in I}( r_2^i)^{n+1}-( r_1^i)^{n+1}
\\
&\le& C(n)\,\eta(\Om)^\a\,\,|B|\,\sum_{i\in I}( r_1^i)^{n+1}
\\
&\le& C(n)\,\eta(\Om)^\a\,|A|\le C(n)\,|\Om|\,\eta(\Om)^\a\,,
%\\
%C(n)\|f\|_{C^0(\Om)}\,\de^{(1/2)-(n+1)\a}\sum_{i\in I}( r_1^i)^{n+1}
%\\
%&\le& C(n)\,|\Om|^{2/(n+1)}\,|A_i|\,\de^{(1/2)-(n+1)\a}\,.
\end{eqnarray*}
where in the last inequality we have used $|A|\le|\Om|$. This proves \eqref{poho1.3}.

\medskip

\noindent {\it Step three}: We show the existence of $\{B_{x_j,s_j}\}_{j\in J}\subset\{B_{x_i,r_1^i}\}_{i\in I}$ such that $\{s_j\}_{j\in J}$ satisfies
\begin{equation}
     \label{main thm sj-1}
  \frac{\max_{j\in J}\,|s_j-1|}{\diam(\Om)}\le C(n)\,|\Om|\,\de(\Om)^\a\,,% \le s_j\le 1+ C\,\de(\Om)^\a\,,\qquad\forall j\in J\,,
\end{equation}
and, if $G^*=\bigcup_{j\in J}B_{x_j,s_j}$ (so that $G^*\subset\Om$ by construction), then
\begin{gather}
  \label{main thm Omega meno G^*}
  \frac{|\Om\setminus G^*|}{|\Om|}\le  C_1(n)\,\diam(\Om)\,|\Om|\,\de(\Om)^\a\,,
  \\
  \label{main thm perimetri stima proof}
    \frac{|P(\Om)-\#\,J\,P(B)|}{P(\Om)}\,\le C(n)\,\diam(\Om)\,|\Om|\,\de(\Om)^\a\,,
    \\
    \label{starstar}
    \#\,J\le L\,,\qquad \#\,J\le C(n)\,|\Om|\,.
\end{gather}
(Note that \eqref{main thm perimetri stima proof} implies \eqref{main thm perimetri stima} by \eqref{Omega volume minore L+1} and \eqref{Omega diametro minore L}.) Having in mind to exploit the Pohozaev's identity \eqref{pohozaev} (recall the proof of Theorem \ref{thm compattezza}), we first notice that, by the divergence theorem and by \eqref{f stima Cn},
\begin{eqnarray}\nonumber
\Big|\frac{|\Om|}{n+1}-\int_{\pa\Om}(x\cdot\nu_\Om)|\nabla f|^2\Big|
&=&\Big|\int_{\pa\Om}\,(x\cdot\nu_\Om)\,\Big(\frac1{(n+1)^2}-|\nabla f|^2\Big)\Big|
%\\
%&\le&\diam(\Om)\int_{\pa\Om}\,\Big|\frac1{(n+1)^2}\Big(\frac{n}{H_0}\Big)^2-|\nabla f|^2\Big|
\\\nonumber
&\le&\diam(\Om)\Big(\frac1{n+1}+\|\nabla f\|_{C^0(\Om)}\Big)\,\int_{\pa\Om}\,\Big|\frac1{n+1}-|\nabla f|\Big|
\\\nonumber
&\le&C(n)\,\diam(\Om)\,\int_{\pa\Om}\,\Big|\frac1{n+1}-|\nabla f|\Big|\,.
\end{eqnarray}
By \eqref{stima L2 derivata normale} and $P(\Om)\le C(n)\,|\Om|$, we thus find
\begin{eqnarray}\nonumber
  \Big|\frac{|\Om|}{n+1}-\int_{\pa\Om}(x\cdot\nu_\Om)|\nabla f|^2\Big|
  &\le&
  C(n)\,\diam(\Om)\,\Big(P(\Om)\,\int_{\pa\Om}\,\Big|\frac1{n+1}-|\nabla f|\Big|^2\Big)^{1/2}
  \\\label{poho1}
  &\le& C(n)\,\diam(\Om)\,P(\Om)\, \de(\Om)^{1/2}\,.
\end{eqnarray}
By \eqref{feps f in C0}, \eqref{feps gi stima C0 in Ai}, and $\diam(A_i)\le 2$, one has
\[
\|f-g_i\|_{C^0(A_i)}\le C(n)\,|\Om|^{1/(n+1)}\,\de(\Om)^\a\,,
\]
thus, by \eqref{poho1.3}
\begin{equation}
  \label{poho1.4}
  \Big|\int_\Om f-\sum_{i\in I}\int_{B_{x_i, r_1^i}}g_i\Big|\le C(n)\,|\Om|^{(n+2)/(n+1)}\,\de(\Om)^\a\,.
\end{equation}
With $x_i+ r_1^i\nu_1$ as in \eqref{pointsss}, by definition of $\rho$ and by \eqref{feps gi stima C0 in Ai} we have
\begin{eqnarray*}
C\,|\Om|^{1/(n+1)}\,\de(\Om)^\a&\ge&3\rho=-f_\e(x_i+ r_1^i\nu_1)
\\
&\ge&-g_i(x_i+ r_1^i\nu_1)-C(n)\,( r_1^i)^2\,\eta(\Om)^\a
\\
&=&-\frac{( r_1^i)^2}{2(n+1)}-f_\e(x_i)-C(n)\,( r_1^i)^2\,\eta(\Om)^\a\,,
\end{eqnarray*}
that is, since we definitely have $r_1^i\le 2$,
\[
-\frac{( r_1^i)^2}{2(n+1)}-f_\e(x_i)\le C(n)\,|\Om|^{1/(n+1)}\,\de(\Om)^\a\,.
\]
By this last estimate and the definition of $g_i$,
\begin{eqnarray*}
  \int_{B_{x_i, r_1^i}}(-g_i)&=&\int_{B_{x_i, r_1^i}}-f_\e(x_i)-\frac{|x-x_i|^2}{2(n+1)}
  \\
  &\le& C(n)\,|B_{x_i, r_1^i}|\,|\Om|^{1/(n+1)}\,\de(\Om)^\a+\int_{B_{x_i, r_1^i}}\frac{( r_1^i)^2-|x-x_i|^2}{2(n+1)}
  \\
  &=&C(n)\,|B_{x_i, r_1^i}|\,|\Om|^{1/(n+1)}\,\de(\Om)^\a+\frac{|B_{x_i, r_1^i}|( r_1^i)^2}{(n+3)(n+1)}\,.
\end{eqnarray*}
By combining \eqref{poho1.4} with this last inequality we find
\begin{equation}
  \label{poho2}
  \int_\Om (-f)\le \sum_{i\in I}\frac{|B_{x_i, r_1^i}|( r_1^i)^2}{(n+3)(n+1)}+C(n)\,|\Om|^{(n+2)/(n+1)}\,\de(\Om)^\a\,.
\end{equation}
By Pohozaev's identity \eqref{pohozaev}, \eqref{poho1} and \eqref{poho2}, and by taking into account that $|\Om|^{1/(n+1)}\le C(n)\,\diam(\Om)$ and \eqref{Omega riscalato H0 uguale n}, we find
\[
\diam(\Om)\,P(\Om)\,\de(\Om)^{1/2}+|\Om|^{(n+2)/(n+1)}\,\de(\Om)^\a\le C(n)\,\diam(\Om)\,|\Om|\,\de(\Om)^\a\,,
\]
and thus
\begin{eqnarray}
\label{fve}
|\Om|\le \sum_{i\in I}|B_{x_i, r_1^i}|( r_1^i)^2+C(n)\,\diam(\Om)\,|\Om|\,\de(\Om)^\a\,.
\end{eqnarray}
By $|\Om|\ge \sum_{i\in I}|B_{x_i, r_1^i}|$ we finally get
\begin{eqnarray}\label{cookies}
&&|B|\,\sum_{i\in I}(r_1^i)^{n+1}\,\Big(1-( r_1^i)^2\Big)
\le C(n)\,\diam(\Om)\,|\Om|\,\de(\Om)^\a\,.
\end{eqnarray}
%Since we are assuming $\de(\Om)\le C\,n^{-5/4\a}$ and since $(n+1)^{1/2}/C\le|B|^{1/(n+1)}\le C(n+1)^{1/2}$ by \eqref{palle e gamma}, we get
%\[
%\de(\Om)^{(n+1)/2(n+2)}\le C\,|B|^{1/(n+1)}\,,
%\]
%and thus, taking also into account that $|B|^{n/(n+1)}\le C\,n$,
%\begin{eqnarray}
%\sum_{i\in I}(r_1^i)^{n+1}\,\Big(\Big(\frac{n}{H_0}\Big)^2-( r_1^i)^2\Big)\le
%C\,2^n\,n^{17/4}\,|\Om|^{(n+2)/(n+1)}\,\diam(\Om)\,\de(\Om)^\a\,.
%\end{eqnarray}
Let us set $\vphi(r)=r^{n+1}\,(1-r^2)$, $r\ge0$, and note that
\begin{equation}
  \label{vphi lb}
  \vphi(r)\ge
\left\{
\begin{split}
\frac34\,\,r^{n+1}\,,&\qquad\mbox{if $0\le r\le \frac{1}{2}$}\,,
\\
\frac{1-r}{2^{n+1}}\,,&\qquad\mbox{if $\frac{1}{2}\le r\le 1$}\,.
\end{split}
\right .
\end{equation}
With $C_1$ as in \eqref{luis}, let us now set
\begin{eqnarray*}
I^*=\big\{i\in I: 1\le r_1^i\le 1+C_1\,\de(\Om)^\a\big\}\,,
\qquad
I^{**}=\big\{i\in I: \frac{1}{2}\le r_1^i\le 1\big\}\,.
\end{eqnarray*}
Since $B_{x_i,r_1^i}\subset\Om$ for each $i\in I$ and by $\de(\Om)\le c(n)$ we find
\[
\#\,I^*\le\frac{|\Om|}{|B|} \,,\qquad 0\ge \vphi(r_1^i)\ge-C(n)\,\de(\Om)^\a\,, \qquad\forall i\in I^*\,,
\]
so that
\begin{equation}
  \label{u1}
  -|B|\sum_{i\in I^*}\vphi(r_1^i)\le C(n)\,|\Om|\,\de(\Om)^\a\,.
\end{equation}
By combining \eqref{u1} with  \eqref{cookies} one finds
\begin{equation}
  \label{cookies2}
  |B|\sum_{i\in I\setminus I^*}\vphi(r_1^i)\le C(n)\,\diam(\Om)\,|\Om|\,\de(\Om)^\a\,.
\end{equation}
Since $\vphi(r_1^i)\ge0$ for every $i\in I\setminus I^*$, \eqref{vphi lb} implies that
\[
  \frac34\,\sum_{i\in I\setminus(I^*\cup I^{**})}|B_{x_i,r_1^i}|\le |B|\sum_{i\in I\setminus(I^*\cup I^{**})}\vphi(r_1^i)\,,
\]
and thus, by \eqref{cookies2},
\begin{equation}
  \label{pizza}
  \sum_{i\in I\setminus(I^*\cup I^{**})}|B_{x_i,r_1^i}|\le C(n)\,\diam(\Om)\,|\Om|\,\de(\Om)^\a\,.
\end{equation}
%If we combine \eqref{pizza} with \eqref{stima L1 Omega 1}, then we find
%\begin{equation}
%  \label{stima L1 Omega 2}
%  \Big|\Om\setminus \bigcup_{i\in I^*\cup I^{**}}B_{x_i,r_1^i}\Big|\le C(n)\,|\Om|^{(n+3)/(n+1)}\,\de(\Om)^{\a/2}\,.
%\end{equation}
We now prove that $r_1^i$ is close to $1$ for every $i\in I^{**}$. Indeed, by exploiting again the fact that $\vphi(r_1^i)\ge0$ for every $i\in I\setminus I^*$, together with \eqref{cookies2} and \eqref{vphi lb}, we find that
\[
\frac1{2^{n+1}} \sum_{i\in I^{**}}(1-r_1^i)\le
C(n)\,\diam(\Om)\,|\Om|\,\de(\Om)^\a\,,
\]
which in particular gives
\begin{equation}
  \label{stimetta}
1\ge r_1^i\ge 1-C(n)\,\diam(\Om)\,|\Om|\,\de(\Om)^\a\,,\qquad\forall i\in I^{**}\,.
\end{equation}
Finally, if we set $J=I^*\cup I^{**}$ and $s_j=r_1^j$ for $j\in J$, then \eqref{main thm sj-1} follows from \eqref{stimetta} and the definition of $I^*$, while \eqref{fve}, \eqref{pizza} and \eqref{main thm sj-1} give us
\begin{eqnarray*}
|\Om|&\le&\sum_{j\in J}|B_{x_j, s_j}|\,s_j^2+C(n)\,\diam(\Om)\,|\Om|\,\de(\Om)^\a
\\
&\le&(1+C(n)\,\diam(\Om)\,|\Om|\,\de(\Om)^\a)\,|G^*|+C(n)\,\diam(\Om)\,|\Om|\,\de(\Om)^\a\,,
\end{eqnarray*}
i.e.
\[
|\Om\setminus G^*|\le C(n)\,\diam(\Om)\,|\Om|\,|G^*|\,\de(\Om)^\a\le C(n)\,\diam(\Om)\,|\Om|^2\,\de(\Om)^\a\,.
\]
This proves \eqref{main thm Omega meno G^*}. Now by \eqref{main thm sj-1} and since $s_j\ge 1/2$
\begin{eqnarray*}
|P(G^*)-(n+1)|G^*||&=&(n+1)|B|\sum_{j\in J}s_j^n\,|s_j-1|\le C(n)\,\max_{j\in J}|s_j-1|\,|B|\sum_{j\in J}s_j^{n+1}\,
\\
&\le&C(n)\,\diam(\Om)\,|\Om|^2\,\de(\Om)^\a\,,
\end{eqnarray*}
so that \eqref{Omega riscalato H0 uguale n} gives us
\begin{eqnarray*}
|P(\Om)-P(G^*)|&=&|(n+1)|\Om|-P(G^*)|\le C(n)\,\big||\Om|-|G^*|\big|+C(n)\,\diam(\Om)\,|\Om|^2\,\de(\Om)^\a
\\
&\le&C(n)\,\diam(\Om)\,|\Om|^2\,\de(\Om)^\a\,,
\end{eqnarray*}
which proves \eqref{main thm perimetri stima proof} as $|\Om|\le C(n)\,P(\Om)$, and since, by an entirely similar argument,
\[
|P(G^*)-\#J\,P(B)|\le C(n)\,P(\Om)\,\diam(\Om)\,|\Om|\,\de(\Om)^\a\,.
\]
We conclude this step by proving \eqref{starstar}: indeed, by \eqref{Omega volume minore L+1}, \eqref{Omega diametro minore L} and \eqref{main thm sj-1}
\[
(L+1-a)|B|\ge|\Om|\ge |G^*|\ge\big(1-C(n,L)\,\de(\Om)^\a\big)|B|\#J\,,
\]
and thus we conclude by $\de(\Om)\le c(n,L,a)$.

\medskip

\noindent {\it Step four}: We prove that
\begin{eqnarray}
  \label{main thm onesided hd G*}
  \frac{\max_{x\in \pa G^*}\dist(x,\pa\Om)}{\diam(\Om)}\,\le C(n)\,\de(\Om)^\a\,.
\end{eqnarray}
We first notice that if $x_0\in\pa A_i$, then by \eqref{feps gi gradienti stima C0 in Ai}, $A_i\subset B_{x_i, r_2^i}$ and $r_2^i\le 2$, one has
\[
\Big|\nabla f_\e(x_0)-\frac{(x_0-x_i)}{n+1}\Big|\le C\,\de(\Om)^\a\,,
\]
so that, by $|x_0-x_i|\ge r_1^i\ge 1/2$, one finds
\[
|\nabla f_\e(x_0)|\ge c_1(n)\,,\qquad\forall x_0\in\pa A_i\,.
\]
Let $A_i^*$ be the set of points $x\in \Om_\e\setminus\ov{A_i}$ such that if $x_0\in\pa A_i$ denotes the projection of $x$ onto the convex set $A_i$, then the open segment joining $x$ to $x_0$ is entirely contained in $\Om_\e$ (and thus in $A_i^*$: in particular, $A_i^*$ is connected). By \eqref{D2feps positivo}, $f_\e(x)\le 0$ and $\pa A_i\subset\{f_\e=-3\rho\}$, we get
\begin{eqnarray*}
3\rho&\ge& f_\e(x)-f_\e(x_0)=\nabla f_\e(x_0)\cdot(x-x_0)+\frac12\int_0^1\,\nabla^2f_\e(tx+(1-t)x_0)[x-x_0,x-x_0]\,dt
\\
&\ge& |\nabla f_\e(x_0)| |x-x_0|-C_2(n)\,|x-x_0|^2\ge \frac{c_1(n)}2\,|x-x_0|\,,
\end{eqnarray*}
where we have used the fact that both $\nabla f_\e(x_0)$ and $x-x_0$ are orthogonal to $\pa A_i$ at $x_0$, and we have assumed that $|x-x_0|\le c_1(n)/2C_2(n)$. If we denote by
\[
I_d(X)=\big\{z\in\R^{n+1}:\dist(z,X)<d\big\}\,,\qquad X\subset \R^{n+1}\,,d>0\,,
\]
the $d$-neighborhood of a set $X$, then, setting $k_0(n)=c_1(n)/2C_2(n)$, we obtain
\[
I_{k_0(n)}(A_i)\cap A_i^*\subset I_{6\rho/c_1(n)}(A_i)\,.
\]
By connectedness of $A_i^*$ and by $\de(\Om)\le c(n,L)$, this proves that
\[
A_i^*\subset I_{6\rho/c_1(n)}(A_i)\,.
\]
Since $A_i\cc\Om_\e$ (thanks to \eqref{inclusioni livelli}), for every $x\in\pa A_i$, there exists $y\in \pa\Om_\e$ such the open segment joining $x$ and $y$ is entirely contained in $A_i^*$, and the length of this segment is bounded by $6\rho/c_1(n)$, so that
\[
\pa A_i\subset I_{6\rho/c_1(n)}(\pa\Om_\e)\subset I_{\e+(6\rho/c_1(n))}(\pa\Om)\,.
\]

\medskip

\noindent {\it Step five}: We construct a family of disjoint balls $\{B_{z_j,1}\}_{j\in J}$ such that if we set
\[
G=\bigcup_{j\in J}B_{z_j,1}\,,
\]
then
\begin{eqnarray}
  \label{step five 0}
  \frac{|\Om\Delta G|}{|\Om|}\le C(n)\,\diam(\Om)\,|\Om|\,\de(\Om)^\a\,,\qquad
  \frac{\max_{x\in \pa G}\dist(x,\pa\Om)}{\diam(\Om)}\le  C(n)\,|\Om|\,\de(\Om)^\a\,.
\end{eqnarray}
(Note that \eqref{step five 0} imply \eqref{main thm Omega meno G} and \eqref{main thm onesided hd} thanks to \eqref{Omega volume minore L+1} and \eqref{Omega diametro minore L}.) Indeed if we set $s_j'=\min\{s_j,1\}$, then $\{B_{x_j,s_j'}\}_{j\in J}$ is a family of disjoint balls such that $G'=\bigcup_{j\in J}B_{x_j,s_j'}$ satisfies $G'\subset\Om$ and
  \begin{eqnarray}\label{step five 1}
  \frac{|\Om\setminus G'|}{|\Om|}\le C(n)\,\diam(\Om)\,|\Om|\,\de(\Om)^\a\,,\qquad \frac{\max_{x\in \pa G'}\dist(x,\pa\Om)}{\diam(\Om)}\le  C(n)\,|\Om|\,\de(\Om)^\a\,,
\end{eqnarray}
thanks to \eqref{main thm Omega meno G^*}, \eqref{main thm onesided hd G*} and \eqref{main thm sj-1}. Next, let us fix $j_0\in J$ such that $1>s_{j_0}$. By translating each $x_j$ with $j\ne j_0$ into
\[
\hat x_j=x_j+(1-s_{j_0})\,\frac{x_j-x_{j_0}}{|x_j-x_{j_0}|}\,,
\]
and setting $\hat s_j=s_j$ if $j\ne j_0$, $\hat s_{j_0}=1$, $\hat x_{j_0}=x_{j_0}$, we find that $\{B_{\hat x_j,\hat s_j}\}_{j\in J}$ is a family of disjoint balls such that $\hat G=\bigcup_{j\in J}B_{\hat x_j,\hat s_j}$ satisfies
 \begin{eqnarray}\label{step five 2}
  \frac{|\Om\Delta \hat{G}|}{|\Om|}\le C(n)\,\diam(\Om)\,|\Om|\,\de(\Om)^\a\,,\qquad \frac{\max_{x\in \pa \hat{G}}\dist(x,\pa\Om)}{\diam(\Om)}\le  C(n)\,|\Om|\,\de(\Om)^\a\,,
\end{eqnarray}
thanks to \eqref{step five 1} and \eqref{main thm sj-1}. By iteratively repeating this procedure on each $j\in J$ such that $\hat{s}_j<1$ we finally construct a family $G$ with the required properties.

\medskip

\noindent {\it Step six}: In this step we complete the proof of Theorem \ref{thm main 1} up to statements (i) and (ii). To this end, we want to apply Lemma \ref{lemma allard} to $\Om$ and $G$. We first notice that for every $x\in\pa G$, thanks to \eqref{main thm onesided hd}, there exists $g(x)\in\pa\Om$ such that
\begin{equation}
\label{def of gx}
|x-g(x)|\le C(n)\,\diam(\Om)\,|\Om|\,\de(\Om)^\a\,.
\end{equation}
(The point $g(x)$ will play the role of $y$ in Lemma \ref{lemma allard}.) Setting $S_j=\pa B_{z_j,1}$ for $j\in J$, we define $\{r_x\}_{x\in\pa G}$ by the rule
\begin{equation}
  \label{def rx}
  r_x=\sup\Big\{r\in(0,2\de(\Om)^\b):B_{x,r}\cap (\pa G\setminus S_j)=\emptyset\Big\}\,,\qquad\mbox{if $j\in J$, $x\in S_j$}\,,
\end{equation}
and then set
\begin{equation}
  \label{rx sigma}
  \S^*=\big\{x\in \pa G:r_x\ge\de(\Om)^{\beta}\big\}\,,
\end{equation}
for some $\beta=\beta(n)\in(0,\a)$ to be suitable chosen later on, see \eqref{beta eccolo grande}. With $\S_\l$ defined as in \eqref{definizione Sigma lambda}, see the statement of Lemma \ref{lemma allard}, it is clear that we can choose $c_3(n)>0$ in such a way that
\begin{equation}
  \label{lambda nel quantitativo}
  \S_\l\subset\S^*\,,\qquad\mbox{for}\qquad\l=c_3(n)\,\de(\Om)^{\beta/2}\,.
\end{equation}
In particular, by Remark \ref{remark geometry of Sigma lambda} and by \eqref{starstar},
\begin{eqnarray}\label{grandi verita}
\begin{split}
&\mbox{$\pa G\setminus\S$ consists of at most $C(n)\#J$-many spherical caps}
\\
&\mbox{whose diameters are bounded by $C(n)\,\l\le C(n)\,\de(\Om)^{\beta/2}$}\,.
\end{split}
\end{eqnarray}
We now {\it claim} that for every $x\in\S_\l$, $\l$ as in \eqref{lambda nel quantitativo}, one can find $\rho_x$ such that,
\begin{gather}
\label{lallard taglia rhox proof}
\frac{c_0(n)\,\l^2}2\le\rho_x\le c_0(n)\,\l^2\,,
\\
\label{allard hp check}
    \s(\pa\Om,g(x),\rho_x)\le \s_0(n)\,\l(n)\,,
    \\
  \label{lallard taglia x-y proof}
    |x-g(x)|\le\frac{\s_0(n)\rho_x^2}2\,,
    \\
    \label{misteri della fede proof}
    |\Om\Delta G|\le C(n)\,\rho_x^{n+1}\,\sigma(\pa\Om,g(x),\rho_x)^{1/4n}\,,
\end{gather}
where $\s_0(n)$ and $\l(n)$ are as in Lemma \ref{lemma allard}. In proving the claim, the harder task is accommodating \eqref{allard hp check}, because it requires to control the perimeter convergence of $\Om$ to $G$ localized in balls in terms of the Alexandrov's deficit.

We now prove the claim. First of all we notice that in order to entail \eqref{lallard taglia rhox proof}, and thanks to $\S_\l\subset\S^*$, \eqref{def rx} and \eqref{rx sigma}, it is enough to pick $\rho_x$ satisfying
\begin{equation}
  \label{where to pick rhox}
  \frac{c_4(n)}2\,r_x\le \rho_x\le c_4(n)\,r_x\,,
\end{equation}
for a suitable constant $c_4\in(0,1)$. Next, we notice that by $\de(\Om)\le c(n,L)$ we can entail
\begin{equation}
  \label{rx rstarn}\sup_{x\in\S_\l}r_x\le 2\,\de(\Om)^\b\le r_*(n)\,,
\end{equation}
for an arbitrarily small constant $r_*(n)$. Provided $r_*(n)$ is small enough, then \eqref{rx rstarn}, \eqref{rn definizione x} and \eqref{rn uniform x} give us
\begin{eqnarray}
\label{perimetro G pallina}
  P(G;B_{x,r})\le
  (1+C(n)\,r^2)\,\om_n\,r^n\,,\qquad \forall r<r_x\,,
  \\
  \label{uniform G pallina}
  \sup\big\{|(p-x)\cdot\nu_{G}(x)|:p\in B_{x,r}\cap\pa G\big\}\le C(n)\,r^2\,,\qquad\forall r<r_x\,.
\end{eqnarray}
(We are going to use this bounds to quantify the size of $P(G;B_{g(x),\rho_x})$, see \eqref{spring} below.) By Chebyshev inequality and by \eqref{main thm Omega meno G^*}, we can pick $\rho_x$ satisfying \eqref{where to pick rhox} and
\begin{eqnarray}
\H^n\Big((\Om\setminus G^*)\cap \pa B_{g(x),\rho_x}\Big)&\le& C(n)\,|\Om|^2\,\diam(\Om)\,\de(\Om)^{\a-\b}\,,
\label{rhox 1}
%&\le&
%C(n)\,|\Om|^{(2n+3)/(n+1)}\,\de(\Om)^{\a/4}\,,
\\\label{rhox 2}
\H^n\big((\pa\Om\cup\pa G^*\cup\pa G)\cap\pa B_{g(x),\rho_x}\big)&=&0\,.
\end{eqnarray}
(Notice that we are using $G^*$ in place of $G$ here, because $G^*$ is contained in $\Om$, and this will simplify a key computation based on the divergence theorem.) We include a brief justification of \eqref{rhox 1} for the sake of clarity: let us set
\[
W=\big\{\rho\in\big(\frac{c_4\,r_x}2,c_4\,r_x\big):\H^n\big((\Om\setminus G^*)\cap \pa B_{g(x),\rho}\big)\ge K(n)\,|\Om|^2\,\diam(\Om)\,\de(\Om)^{\a-\b}\big\}\,,
\]
then, with $C(n)$ as in \eqref{main thm Omega meno G^*} and for a suitably large value of $K(n)$, we have $\H^1(W)\le (C(n)/K(n))\,\de(\Om)^\beta<c_4\,\de(\Om)^\b/2\le c_4\,r_x/2$. Now let us consider the open sets
\[
U_j=\Big\{y\in\R^{n+1}:|y-z_j|<|y-z_{j'}|\qquad\forall j'\ne j\Big\}\,,\qquad j\in J\,,
\]
so that $B_{z_j,1}\subset U_j$ for every $j\in J$, and $\{U_j\}_{j\in J}$ is a partition of $\R^{n+1}$ modulo a $\H^n$-dimensional set. The boundary of each $U_j$ is contained into finitely many hyperplanes $\{L_{j,i}\}_{i=1}^{m_j}$, where $m_j\le \#\,J\le C(n)\,|\Om|$. Thus
\begin{equation}
  \label{stima j i}
  \#\{(j,i):j\in J\,,1\le i\le m_j\}\le C(n)\,|\Om|^2\,.
\end{equation}
We claim the existence of $v\in S^n$ and $t^*\in\R$ such that, setting $L_{j,i}^*=t^*\,v+L_{j,i}$,
\begin{eqnarray}
  \label{Ujstar 1x}
\H^n(L_{j,i}^*\cap(\Om\setminus G^*))&\le&C(n)\,|\Om|^6\,\diam(\Om)\,\de(\Om)^{\a/2}\,,
\\
  \label{Ujstar 2x}
|t^*|&\le&\de(\Om)^{\a/2}\,,
\\
  \label{Ujstar 1.5x}
\H^n\big(L_{j,i}^*\cap(\pa\Om\cup\pa G^*)\big)&=&0\,.
\end{eqnarray}
To choose $v$, we let $\nu_{j,i}$ be a normal vector to $L_{j,i}$, and require $v\in S^n$ to be such that
%so that we can find $v\in S^n$ such that
\begin{equation}
  \label{condizione v}
  |v\cdot\nu_{j,i}|\ge \frac{c_2(n)}{|\Om|^2}\,,\qquad\forall j\in J\,,1\le i\le m_j\,.
\end{equation}
(The existence of such $v$ is deduced by observing that if $\theta>0$, then each spherical stripe $Y_{j,i}^\theta=\{u\in S^n:|u\cdot\nu_{j,i}|<\theta\}$ satisfies $\H^n(Y_{j,i}^\theta)\le C(n)\theta$ so that by \eqref{stima j i}
\[
\H^n\Big(S^n\setminus\bigcup_{j\in J}\bigcup_{i=1}^{m_j}Y_{j,i}^\theta\Big)\ge\H^n(S^n)-C(n)\,|\Om|^2\,\theta>0\,,
\]
provided $\theta=c(n)/|\Om|^2$ for a suitably small value of $c(n)$.) We now find $t^*$. For a constant $M(n)$ to be properly chosen, let us set
\[
I_{j,i}=\Big\{t\in \R:|t|<\de(\Om)^{\a/2}\,,\H^n\big((\Om\setminus G^*)\cap(t\,v+L_{j,i})\big)\ge M(n)\,|\Om|^6\diam(\Om)\,\de(\Om)^{\a/2}\Big\}\,.
\]
If $\ell_{j,i}(y)=y\cdot\nu_{j,i}$ then $L_{j,i}=\{\ell_{j,i}=\beta_{j,i}\}$ for some $\beta_{j,i}$, while $t\,v+L_{j,i}=\{\ell_{j,i}=\beta_{j,i}+t\,v\cdot \nu_{j,i}\}$. By \eqref{main thm Omega meno G^*} and Fubini's theorem we find
\begin{eqnarray*}
  C_1(n)\,|\Om|^2\,\diam(\Om)\,\de(\Om)^\a&\ge&|\Om\setminus G^*|=\int_\R\H^n((\Om\setminus G^*)\cap\{\ell_{j,i}=s\})\,ds
  \\
  &\ge&|v\cdot\nu_{j,i}|\,\int_\R\H^n((\Om\setminus G^*)\cap\{\ell_{j,i}=\beta_{j,i}+t\,v\cdot \nu_{j,i}\})\,dt
  \\
  &\ge&|v\cdot\nu_{j,i}|\,\H^1(I_{j,i})\,M(n)\,|\Om|^6\diam(\Om)\,\de(\Om)^{\a/2}\,,
\end{eqnarray*}
that is, by \eqref{condizione v},
\[
|\Om|^2\,\H^1(I_{j,i})\le \frac{C_1(n)\,\de(\Om)^{\a/2}}{c_2(n)\,M(n)}\,,\qquad\forall j\in J\,,1\le i\le m_j\,.
\]
By combining this estimate with \eqref{stima j i}, we see that if $M(n)$ is large enough, then
\[
\H^1\Big((-\de(\Om)^{\a/2},\de(\Om)^{\a/2})\setminus\bigcup_{j\in J}\bigcup_{i=1}^{m_j}\cap I_{j,i}\Big)>0\,,
\]
that is, there exists $t^*$ such that \eqref{Ujstar 1x}, \eqref{Ujstar 2x} and \eqref{Ujstar 1.5x} hold. If we set $U_j^*=t^*\,v+U_j$, then $\{U_j^*\}_{j\in J}$ is a partition of $\R^{n+1}$ modulo a $\H^n$-dimensional set such that $\pa U_j^*$ is contained into the hyperplanes $\{L_{j,i}^*\}_{i=1}^{m_j}$ and such that
\begin{eqnarray}
  \label{Ujstar 1}
  \H^n\big(\pa U_j^*\cap(\Om\setminus G^*)\big)&\le& C(n)\,|\Om|^6\,\diam(\Om)\,\de(\Om)^{\a/2}\,,
  \\\label{Ujstar 1.5}
  \H^n\big(\pa U_j^*\cap(\pa\Om\cup\pa G^*)\big)&=&0\,,
  \\
  \label{Ujstar 2}
  \H^n\big((U_j^*\cap\pa G^*)\Delta S_j\big))&\le& C(n)\,\de(\Om)^{\a/2}\,.
\end{eqnarray}
Here, \eqref{Ujstar 1} and \eqref{Ujstar 1.5} are immediate from \eqref{Ujstar 1x} and \eqref{Ujstar 1.5x}. To prove \eqref{Ujstar 2}, let us recall that $S_j=\pa B_{x_j,s_j}\subset\ov{U_j}$, so that by translating the boundary hyperplanes of $U_j$ by $t^*\,v$ with $|t^*|\le \de(\Om)^{\a/2}$ we have possibly cut out from $S_j$ at most $m_j$-many spherical caps of $\H^n$-measure bounded above by
\[
C(n)\,|t^*|^{n/2}\le C(n)\,\de(\Om)^{n\,\a/4}\,,
\]
that is, thanks also to $m_j\le L$,
\[
\H^n((U_j^*\cap S_j)\Delta S_j)\le C(n)\,L\,\de(\Om)^{n\,\a/4}\le C(n)\,\de(\Om)^{\a/2}\,,
\]
thanks to $\de(\Om)\le c(n,L)$. By a similar argument, since $\H^n(S_{j'}\cap U_j)=0$ for $j\ne j'$, we have that $\H^n(U_j^*\cap S_{j'})\le C(n)\,\de(\Om)^{\a/2}$, and thus \eqref{Ujstar 2} is proved.

We now apply the divergence theorem to the vector field $y\mapsto(y-z_j)/|y-z_j|$ on the set of finite perimeter $(\Om\setminus G^*)\cap (U_j^*\setminus B_{g(x),\rho_x})$. Since $\Div((y-z_j)/|y-z_j|)=n/|y-z_j|$ for $y\ne z_j$ and $G^*\subset\Om$, by exploiting \cite[Theorem 16.3]{maggiBOOK}, one finds
\begin{eqnarray*}
  0&<&\int_{(U_j^*\setminus B_{g(x),\rho_x})\cap \pa\Om}\frac{y-z_j}{|y-z_j|}\cdot\nu_\Om(y)\,d\H^n_y
  -\int_{(U_j^*\setminus B_{g(x),\rho_x})\cap\pa G^*}\frac{y-z_j}{|y-z_j|}\cdot\nu_{G^*}(y)\,d\H^n_y
  \\
  &&+\int_{\pa(U_j^*\setminus B_{g(x),\rho_x})\cap(\Om\setminus G^*)}\frac{y-z_j}{|y-z_j|}\cdot\nu_{U_j^*\setminus B_{g(x),\rho_x}}(y)\,d\H^n_y
  \\
  &\le&P(\Om; U_j^*\setminus B_{g(x),\rho_x})+\H^n\Big((\pa U_j^*\cup\pa B_{g(x),\rho_x})\cap(\Om\setminus G^*)\Big)
  \\
  &&-P(G^*;U_j^*\setminus B_{g(x),\rho_x})+C(n)\,\de(\Om)^{\a/2}\,
\end{eqnarray*}
where in the last inequality we have used $\nu_{G^*}(y)\cdot[(y-z_j)/(y-z_j)]=1$ if $y\in S_j$ and \eqref{Ujstar 2}. By combining this last inequality with \eqref{rhox 1} and \eqref{Ujstar 1} we thus find
\begin{eqnarray*}
P(G^*;U_j^*\setminus B_{g(x),\rho_x})&\le&P(\Om;U_j^*\setminus B_{g(x),\rho_x})
\\
&&+C(n)\,|\Om|^2\,\diam(\Om)\,\Big(\de(\Om)^{\a-\b}+|\Om|^4\,\de(\Om)^{\a/2}\Big)\,.
\end{eqnarray*}
By adding up over $j\in J$, and since $\#\,J\le C(n)\,|\Om|$, we thus find
\begin{eqnarray*}
  P(G^*;\R^{n+1}\setminus B_{g(x),\rho_x})&\le&P(\Om; \R^{n+1}\setminus B_{g(x),\rho_x})\\
  &&+C(n)\,|\Om|^3\,\diam(\Om)\,\Big(\de(\Om)^{\a-\b}+|\Om|^4\,\de(\Om)^{\a/2}\Big)\,,
\end{eqnarray*}
which gives us, keeping in mind the construction used in step five to define $G$ starting from $G^*$, and also thanks to \eqref{main thm sj-1} and $\de(\Om)\le c(n)$,
\begin{eqnarray}
  \label{task}
  P(G;\R^{n+1}\setminus B_{g(x),\rho_x})&\le&P(\Om; \R^{n+1}\setminus B_{g(x),\rho_x})
  \\\nonumber
  &&+
C(n)\,|\Om|^3\,\diam(\Om)\,\Big(\de(\Om)^{\a-\b}+|\Om|^4\,\de(\Om)^{\a/2}\Big)\,,
\end{eqnarray}
which, combined with \eqref{main thm perimetri stima} and \eqref{rhox 2}, gives us
\begin{eqnarray}\label{chain}
P(\Om;B_{g(x),\rho_x})&=&P(\Om)-P(\Om;\R^{n+1}\setminus B_{g(x),\rho_x})
\\\nonumber
&\le&P(G)-P(G;\R^{n+1}\setminus B_{g(x),\rho_x})
+C(n)\,|\Om|^3\,\diam(\Om)\,\Big(\de(\Om)^{\a-\b}+|\Om|^4\,\de(\Om)^{\a/2}\Big)
\\\nonumber
&=&P(G;B_{g(x),\rho_x})+C(n)\,|\Om|^3\,\diam(\Om)\,\Big(\de(\Om)^{\a-\b}+|\Om|^4\,\de(\Om)^{\a/2}\Big)\,.
\end{eqnarray}
By \eqref{def of gx}, \eqref{Omega volume minore L+1}, \eqref{Omega diametro minore L}, and thanks to $\de(\Om)\le c(n,L)$, we entail $B_{g(x),\rho_x}\subset B_{x,r_x}$, so that, by definition of $r_x$,
\begin{eqnarray*}
P(G;B_{g(x),\rho_x})&=&\H^n(B_{g(x),\rho_x}\cap\pa G)=\H^n(B_{g(x),\rho_x}\cap S_j)
\\
&\le&(1+C(n)\,\rho_x)\,\H^n(B_{x,\rho_x}\cap S_j)=(1+C(n)\,\rho_x)\,P(G;B_{x,\rho_x})\,.
\end{eqnarray*}
By combining this inequality with \eqref{chain}, \eqref{perimetro G pallina}, \eqref{rx sigma} and \eqref{where to pick rhox} we find
\begin{eqnarray}\label{spring}
\frac{P(\Om;B_{g(x),\rho_x})}{\om_n\,\rho_x^n}-1&\le&
C(n)\,\rho_x+\frac{C(n)}{\rho_x^n}\,|\Om|^3\,\diam(\Om)\,\Big(\de(\Om)^{\a-\b}+|\Om|^4\,\de(\Om)^{\a/2}\Big)
\\
\nonumber
&\le&
C(n)\,\Big(\de(\Om)^\b+|\Om|^3\diam(\Om)\big(\de(\Om)^{\a-(n+1)\b}+|\Om|^4\de(\Om)^{(\a/2)-n\b}\big)\Big)\,.
\end{eqnarray}
By combining \eqref{spring} with \eqref{allard deficit} (the definition of $\s(\pa\Om,g(x),\rho_x)$), \eqref{Omega H fra nmezzi e 2n} and $\rho_x\le 2\de(\Om)^\b$, we find
\[
\s(\pa\Om,g(x),\rho_x)\le C(n)\,\Big(\de(\Om)^\b+|\Om|^3\diam(\Om)\big(\de(\Om)^{\a-(n+1)\b}+|\Om|^4\de(\Om)^{(\a/2)-n\b}\big)\Big)\,.
\]
For this estimate to be nontrivial we definitely need
\[
\a>\max\{(n+1)\b,2n\b\}=2n\b\,.
\]
Under this assumption we have $\a-(n+1)\b>(\a/2)-n\b$, and thus
\begin{eqnarray}\nonumber
\s(\pa\Om,g(x),\rho_x)&\le& C(n)\,\Big(\de(\Om)^\b+\,|\Om|^7\,\diam(\Om) \de(\Om)^{(\a/2)-n\b}\Big)
\\\label{cinotti}
&\le& C(n)\,|\Om|^7\,\diam(\Om)\,\de(\Om)^{\a/2(n+1)}\,,
\end{eqnarray}
where we have set
\begin{equation}
  \label{beta eccolo grande}
  \b=\frac{\a}{2(n+1)}\,,
\end{equation}
in order to have $\b=(\a/2)-n\b$. By $\de(\Om)\le c(n,L)$ and by \eqref{cinotti}, we have thus proved so far that for every $x\in\S_\l$ one can find $g(x)\in\pa\Om$ and $\rho_x\in(0,1)$ such that \eqref{lallard taglia rhox proof} and \eqref{allard hp check} hold.

In order to prove our claim, we are left to prove \eqref{lallard taglia x-y proof} and \eqref{misteri della fede proof}. By \eqref{def of gx}, \eqref{Omega volume minore L+1} and \eqref{Omega diametro minore L} we have $|x-g(x)|\le C(n,L)\,\de(\Om)^\a$ while $\s_0(n)\rho_x^2/2\ge c(n)\de(\Om)^{2\b}$ by \eqref{lallard taglia rhox proof}, so that \eqref{lallard taglia x-y proof} follows by $\de(\Om)\le c(n,L)$ thanks to the fact that $\a>2\b$. Similarly, concerning \eqref{misteri della fede proof} we see by \eqref{Omega H fra nmezzi e 2n} that $\s(\pa\Om,g(x),\rho_x)\ge n\,\rho_x$ so that
\[
\rho_x^{n+1}\,\sigma(\pa\Om,g(x),\rho_x)^{1/4n}\ge c(n)\,\rho_x^{n+2}=c(n)\,\de(\Om)^{\b(n+2)}\,,
\]
while \eqref{main thm Omega meno G} gives us $|\Om\Delta G|\le C(n,L)\,\de(\Om)^\a$, so that $\de(\Om)\le c(n,L)$ implies \eqref{misteri della fede proof} thanks to the fact that $\a>\b(n+2)$.

We thus proved our claim: for every $x\in\S_\l$ there exists $g(x)\in\pa\Om$ and $\rho_x$ satisfying \eqref{lallard taglia rhox proof}--\eqref{misteri della fede proof}. By \eqref{lambda nel quantitativo} and  $\de(\Om)\le c(n,L)$ we have that $\l\le\l(n)$, and thus we can apply Lemma \ref{lemma allard} to find a function $\psi:\S_\l\to\R$ such that,
  \begin{gather}\label{psilambda nomra c1gamma proof}
  \|\psi\|_{C^{1,\g}(\S_\l)}\le C(n,\g)\,,\qquad\forall \g\in(0,1)\,,
  \\\label{psilambda C0C1 proof}
  \l^{-2}\|\psi\|_{C^0(\S_\l)}+\|\nabla\psi\|_{C^0(\S_\l)}\le C(n)\,\max_{x\in\S_\l}\s(\pa\Om,g(x),\rho_x)^{1/4n}\,,
  \\\label{psilambda Nthetaintorno proof}
  (\Id+\psi\nu_G)(\S_\l)\subset\pa\Om\,.
  \end{gather}
(Notice that \eqref{psilambda nomra c1gamma proof} is \eqref{psilambda nomra c1gamma}.) Moreover, let us recall from the proof of Lemma \ref{lemma allard} (see \eqref{epici 3 x}) that if $x\in\S_\l\cap S_j$, then the function $\psi$ is actually defined on $\C_{x,\s_0\,\rho_x/4}^{\mu_x}\cap\pa G=\C_{x,\s_0\,\rho_x/4}^{\mu_x}\cap S_j$, with
\begin{equation}\label{bellazi}
  \begin{split}
    \C_{x,\s_0\,\rho_x/8}^{\mu_x}\cap\pa\Om\subset(\Id+\psi\,\nu_G)(\C_{x,\s_0\,\rho_x/4}^{\mu_x}\cap\pa G)
\subset \C_{x,\s_0\,\rho_x/2}^{\mu_x}\cap\pa\Om\,\,,
\\
\|\psi\|_{C^1(\C_{x,\s_0\,\rho_x/4}^{\mu_x}\cap\pa G)}\le C(n)\s(\pa\Om,g(x),\rho_x)^{1/4n}\,.
  \end{split}
\end{equation}
Now, by \eqref{def of gx}, $\de(\Om)\le c(n,L)$, and $\s_0\,\rho_x/8\ge c(n)\,\de(\Om)^\b$ we find
\[
g(x)\in \C_{x,\s_0\,\rho_x/8}^{\mu_x}\cap\pa\Om\,,
\]
and thus, by the first inclusion in \eqref{bellazi}, there exists $y\in \C_{x,\s_0\,\rho_x/4}^{\mu_x}\cap\pa G$ such that
\[
g(x)=y+\psi(y)\nu_G(y)\,.
\]
By \eqref{bellazi}, \eqref{cinotti}, and $\de(\Om)\le c(n,L)$ we can definitely ensure
\begin{equation}
  \label{bellazi2}
  \|\psi\|_{C^1(\C_{x,\s_0\,\rho_x/4}^{\mu_x}\cap\pa G)}\le c(n)\,,
\end{equation}
so that, taking into account that \eqref{uniform G pallina} gives $|(y-x)\cdot\nu_G(y)|\le C(n)\,|y-x|^2$, we find that
\[
|g(x)-x|^2\ge |x-y|^2+|\psi(y)|^2-2\|\psi\|_{C^0(\C_{x,\s_0\,\rho_x/4}^{\mu_x}\cap\pa G)}\,|\nu_G(y)\cdot(y-x)|\ge\frac{|x-y|^2}2+|\psi(y)|^2\,.
\]
Again by \eqref{def of gx} we conclude $|x-y|+|\psi(y)|\le C(n)\,|\Om|\,\diam(\Om)\,\de(\Om)^\a$, and thus, provided $c(n)\le 1$ in \eqref{bellazi2},
$|\psi(x)|\le C(n)\,|\Om|\,\diam(\Om)\,\de(\Om)^\a$. We have thus improved the $C^0$-bound on $\psi$ in \eqref{psilambda C0C1 proof}, by showing that
\begin{equation}
  \label{stima psi C0 ottimale}
  \|\psi\|_{C^0(\S_\l)}\le C_3(n)\,|\Om|\,\diam(\Om)\,\de(\Om)^\a\,.
\end{equation}
By combining \eqref{cinotti} with \eqref{psilambda C0C1 proof} and $\diam(\Om)\le C(n)\,P(\Om)\le C(n)|\Om|$, we obtain
\begin{equation}
  \label{stima psi C1 come e venuta}
  \|\nabla\psi\|_{C^0(\S_\l)}\le C(n)\,|\Om|^{2/n}\,\de(\Om)^{\b/4n}\,.
\end{equation}
(By \eqref{stima psi C0 ottimale} and \eqref{stima psi C1 come e venuta} we deduce \eqref{stima psi intro 2}.) Next, by \eqref{grandi verita} and by $\#\,J\le C(n)\,|\Om|\le C(n)\,P(\Om)$, we have
\begin{equation}
  \label{buchetti}
  \H^n(\pa G\setminus\S_\l)\le C(n)\,P(\Om)\,\de(\Om)^{n\,\b/2}\,,
\end{equation}
while by the area formula
\[
\H^n\big((\Id+\psi\,\nu_G)(\S_\l)\big)=\int_{\S_\l}\sqrt{(1+\psi)^{2n}+(1+\psi)^{2(n-1)}|\nabla\psi|^2}\,,
\]
so that
\begin{eqnarray*}
  \big|\H^n(\S_\l)-\H^n\big((\Id+\psi\,\nu_G)(\S_\l)\big)\big|&\le&
  P(G)\big(\|\psi\|_{C^0(\S_\l)}+\|\nabla\psi\|_{C^0(\S_\l)}^2\big)\\
  &\le&C(n)\,P(\Om)\,|\Om|^{4/n}\,\de(\Om)^{\b/2n}\,,
\end{eqnarray*}
where in the last inequality we have used $P(G)\le 2\,P(\Om)$ (which follows by $\de(\Om)\le c(n,L)$ and \eqref{main thm perimetri stima proof}) together with \eqref{stima psi C0 ottimale} and \eqref{stima psi C1 come e venuta}. By \eqref{main thm perimetri stima proof}, \eqref{buchetti}, and
\begin{eqnarray*}
    &&\H^n\big(\pa\Om\setminus(\Id+\psi\,\nu_G)(\S_\l)\big)
    \\
    &\le&P(\Om)-P(G)+\H^n(\pa G\setminus\S_\l)
    +\big|\H^n(\S_\l)-\H^n\big((\Id+\psi\,\nu_G)(\S_\l)\big)\big|
  \end{eqnarray*}
we thus obtain
  \begin{equation}
    \label{stima bordo omega meno immagine di sigma proof}
    \frac{\H^n\big(\pa\Om\setminus(\Id+\psi\,\nu_G)(\S_\l)\big)}{P(\Om)}\le C(n)\,|\Om|^{4/n}\,\de(\Om)^{\b/2n}\,.
  \end{equation}
  Now let $x_0\in\pa\Om$ and $\tau>0$ be such that
  \[
  \tau=\max_{x\in\pa\Om}\dist(x,\pa G)=\dist(x_0,\pa G)\,,
  \]
  and assume that $\tau\ge 2\,\theta$ for $\theta=C_3(n)\,\diam(\Om)\,|\Om|\,\de(\Om)^\a$. By \eqref{stima psi C0 ottimale} and since $B_{x_0,\tau}\subset \R^{n+1}\setminus \pa G$, one has
  \[
  B_{x_0,\tau-\theta}\subset \R^{n+1}\setminus I_\theta(\pa G)\subset \R^{n+1}\setminus (\Id+\psi\nu_G)(\S_\l)\,,
  \]
  so that, thanks to \eqref{Omega stima densita perimetro basso},
  \begin{eqnarray*}
  c(n)\,\min\{1,\tau-\theta\}^n\le P(\Om,B_{x_0,\tau-\theta})
  \le\H^n(\pa\Om\setminus (\Id+\psi\nu_G)(\S_\l))
  \le C(n)\,P(\Om)\,|\Om|^{4/n}\,\de(\Om)^{\b/2n}\,.
  \end{eqnarray*}
  By $\de(\Om)\le c(n,L)$ we thus find $\min\{1,\tau-\theta\}=\tau-\theta$, and thus
  \[
  \tau\le\theta+C(n)\,P(\Om)^{1/n}\,|\Om|^{4/n^2}\,\de(\Om)^{\b/2n^2}\le C(n)\,P(\Om)^{1/n}\,|\Om|^{4/n^2}\,\de(\Om)^{\b/2n^2}\,.
  \]
  Since $P(\Om)\le C(n)|\Om|\le C(n)\,\diam(\Om)^{n+1}$ and $\diam(\Om)\le C(n)|\Om|$, we find $P(\Om)^{1/n}\le C(n)\,\diam(\Om)\,|\Om|^{1/n}$ and thus conclude that
  \begin{equation}
    \label{step five 0 twin}
      \frac{\max_{x\in\pa\Om}\dist(x,\pa G)}{\diam(\Om)}\le C(n)\,|\Om|^{3/n}\,\de(\Om)^{\b/2n^2}\,.
  \end{equation}
  where have simplified some powers on $|\Om|$ by noticing that $(4/n^2)+(1/n)\le 3/n$. By combining \eqref{step five 0} and \eqref{step five 0 twin} we obtain \eqref{main thm hd stima}.

  \medskip

  \noindent {\it Step seven}: We complete the proof of the theorem. By contradiction, and by definition of $\S_\l$, if $\#\,J\ge 2$ and $\dist(S_j,S_\ell)\ge 4\l$ for every $\ell\ne j$, then $S_j\subset\S_\l$, and thus $\Gamma_j=(\Id+\psi\nu_G)(S_j)\subset\pa\Om$, with $\Gamma_j$ connected. Since $\pa\Om$ is connected, we conclude that $\pa\Om=\Gamma_j$, thus that $\#\,J=1$. This proves that if $\#\,J\ge 2$, then for every $j\in J$ there is $\ell\ne j$ such that $\dist(S_j,S_\ell)\le 4\l$, and then (i) follows by $\diam(\Om)\ge c(n)\,|\Om|^{1/(n+1)}\ge c(n)$. In order to prove (ii), let us assume that $\#\,J\ge 2$, and let $j\ne\ell\in J$ be such that \eqref{sjsjprimo tangenti} holds. Let us set
  \[
  x=\frac{z_j+z_\ell}2=\frac{\bar z_j+\bar z_\ell}2\,,
  \]
  where $\bar z_j$ and $\bar z_\ell$ belong to the closed segment joining $z_j$ and $z_\ell$ and are such that
  \[
  \pa B_{\bar z_j,1}\cap  \pa B_{\bar z_\ell,1}=\{x\}\,.
  \]
  In this way, for every $r\in(0,\k)$ and thanks to \eqref{sjsjprimo tangenti} we have
  \begin{eqnarray*}
    C(n)\,r^{n+2}\ge\big|B_{x,r}\setminus\big(B_{\bar z_j,1}\cup  B_{\bar z_\ell,1}\big)\big|
    \ge\big|B_{x,r}\setminus\big(B_{z_j,1}\cup  B_{z_\ell,1}\big)\big|-C(n)\,\de(\Om)^{\a/4(n+1)}\,.
  \end{eqnarray*}
  By \eqref{main thm Omega meno G} we have
  \begin{eqnarray*}
    \big|B_{x,r}\setminus\big(B_{z_j,1}\cup  B_{z_\ell,1}\big)\big|\ge|B_{x,r}\setminus G|\ge|B_{x,r}\setminus\Om|-C(n)\,L^3\,\de(\Om)^\a\,,
  \end{eqnarray*}
  so that by \eqref{stima densita fuori intro} we finally get
  \[
  \k\,|B|\,r^{n+1}\le C(n)\,\big(L^3\de(\Om)^\a+\de(\Om)^{\a/4(n+1)}+r^{n+2}\big)\,,\qquad\forall r<\k\,.
  \]
  We now use this inequality with $r=\vartheta\,\k$, with $\vartheta=\vartheta(n)\in(0,1)$ to be properly chosen. By $\de(\Om)\le c(n,L,\k)$ we can entail,
  \[
  L^3\de(\Om)^\a+\de(\Om)^{\a/4(n+1)}\le \vartheta^{n+2}\,\k^{n+2}\,,
  \]
  thus finding $\k^{n+2}\,|B|\,\vartheta^{n+1}\le C(n)\,\vartheta^{n+2}\,\k^{n+2}$. This is of course a contradiction if we pick $\vartheta$ small enough. The proof of Theorem \ref{thm main 1} is complete.
\end{proof}

\section{An application to capillarity-type energies}\label{section capillarity}

\begin{proof}[Proof of Proposition \ref{corollary main stationary}]
  It is well-known that \eqref{stationary set intro} implies the existence of a constant $\l\in\R$ such that
  \begin{equation}
  \label{stationary set}
  \int_{\pa\Om}\,\Div^{\pa\Om}\,X+\int_\Om\,g\,(X\cdot\nu_\Om)=\l\int_{\pa\Om} (X\cdot\nu_\Om)\,,\qquad\forall X\in C^\infty_c(\R^{n+1};\R^{n+1})\,.
  \end{equation}
Since $R_0>0$ is such that $\Om\cc B_{2R_0}$, by testing \eqref{stationary set} with $X(x)=\vphi(x)\,x$ for some $\vphi\in C^\infty_c(B_{2R_0})$ with $\vphi=1$ on $\ov{\Om}$, one easily obtains that
  \begin{equation}
  \label{valore di lambda}
  (n+1)\,|\Om|\,\l=n\,P(\Om)+\int_\Om\,\Div(x,g(x))\,dx\,.
  \end{equation}
  At the same time, since $\pa\Om$ is of class $C^2$, \eqref{stationary set} implies that $H+g=\l$ on $\pa\Om$, which combined with \eqref{valore di lambda} gives
  \[
  \frac{H(x)-H_0}{H_0}=\frac1{H_0}\Big(-g(x)+\frac{\int_\Om\,\Div(x\,g(x))\,dx}{(n+1)|\Om|}\Big)\qquad\Rightarrow\qquad\de(\Om)\le \frac{C\,\|g\|_{C^1(B_{R_0})}}{H_0}\,.
  \]
  By the isoperimetric inequality,
  \[
  H_0=\frac{n\, P(\Om)}{(n+1)\,|\Om|}\ge\frac{n\,(n+1)\,|B|^{1/(n+1)}\,|\Om|^{n/(n+1)}}{(n+1)\,|\Om|}=n\,\Big(\frac{|B|}{|\Om|}\Big)^{1/(n+1)}\,.
  \]
  so that $\de(\Om)\le C(n)\,\|g\|_{C^1(B_{R_0})}\,m^{1/(n+1)}$, that is \eqref{delta stazionario}.
\end{proof}

\appendix

\section{Almost-constant mean curvature implies almost-umbilicality}\label{appendix umbilical} The purpose of this appendix is to discuss the relation between the Alexandrov's deficit $\de(\Om)$ and the size of the traceless part $\mathring{A}$ of the second fundamental form $A$ of $\pa\Om$. Having in mind the quantitative results for almost-umbilical surfaces by De Lellis-M\"uller \cite{delellismuller1,delellismuller2} and Perez \cite{perez}, we seek for a control of $\mathring{A}$ in $L^p(\pa\Om)$ for some $p\ge n$. In this direction, we have the following proposition, where $\eta(\Om)$ denotes the Heintze-Karcher deficit of $\Om$, see \eqref{def eta}.

\begin{proposition}\label{proposition mr}
  If $\Om\subset\R^{n+1}$ ($n\ge 2$) is a bounded connected open set with $C^2$-boundary, $H>0$ on $\pa\Om$, and $\de(\Om)\le 1/2$, then
  \begin{equation}
    \label{montielros inq}
    C(n)\,\big(P(\Om)\,\eta(\Om)\big)^{1/(n+1)}\,\|A\|_{L^{p^\star}(\pa\Om)}
    \ge\|\mathring{A}\|_{L^p(\pa\Om)}\,,\qquad\forall p\in[1,n+1]\,,
  \end{equation}
  where $p^\star=(n+1)p/[(n+1)-p]$ if $p<n+1$, and $p^\star=+\infty$ otherwise.
\end{proposition}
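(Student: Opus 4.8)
The plan is to derive \eqref{montielros inq} from the Reilly/Ros identity \eqref{ros identity} by identifying the traceless Hessian of the torsion potential $f$ with a quantity controlled by the Heintze--Karcher deficit, and then transferring this interior information to the boundary via the structure of $\nabla^2 f$ on $\pa\Om$ together with the algebraic relation between $\nabla^2 f$ restricted to $\pa\Om$ and the second fundamental form $A$. Concretely, starting from \eqref{ros identity} and the Cauchy--Schwarz argument already used in the proof of Lemma \ref{lemma hk}, one has
\[
\int_\Om\Big|\nabla^2 f-\frac{\Delta f}{n+1}\,\Id\Big|^2\le C(n)\,|\Om|\,\eta(\Om)\,,
\]
since $\Delta f=1$ and $|\Id|^2=n+1$; this is essentially \eqref{stima hessiano 1}. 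The first step is therefore to record this $L^2$ bound on the traceless interior Hessian of $f$, i.e. $\|\mathring{\nabla^2 f}\|_{L^2(\Om)}^2\le C(n)\,|\Om|\,\eta(\Om)$.

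The second step is to push this to the boundary. On $\pa\Om$ one has $\nabla f=|\nabla f|\,\nu_\Om$ and, using $\Delta f=1$ together with the standard splitting of the Laplacian near the boundary ($\Delta f = \nabla^2_{\nu\nu} f + H\,\nabla_\nu f + \Delta^{\pa\Om} f$ and $\Delta^{\pa\Om} f=0$ since $f\equiv 0$ on $\pa\Om$), the tangential part of $\nabla^2 f$ on $\pa\Om$ equals $|\nabla f|\,A$ up to the diagonal term, so that the traceless part of the tangential Hessian is $|\nabla f|\,\mathring A$. On the other hand, $|\nabla f|$ is close to the constant $1/(n+1)$: this is exactly the content of \eqref{stima L2 derivata normale}, which gives $\|\,|\nabla f|-\frac{1}{n+1}\,\|_{L^2(\pa\Om)}^2\le C(n)\,P(\Om)\,\de(\Om)$ under $\de(\Om)\le 1/2$, and $|\nabla f|$ is also bounded above by \eqref{f Lip}. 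Combining a trace/boundary version of the interior Hessian estimate with these facts yields an $L^2(\pa\Om)$-type control
\[
\int_{\pa\Om}|\nabla f|^2\,|\mathring A|^2\le C(n)\,\big(P(\Om)\,\eta(\Om)\big)^{2/(n+1)}\,\big(\text{something involving }\|A\|\big)\,,
\]
and then one removes the weight $|\nabla f|$ using a lower Harnack-type bound $|\nabla f|\ge c(n)$ on $\pa\Om$, which follows from $H>0$ by the boundary-point argument already used in Lemma \ref{lemma p function} (or from Theorem \ref{thm compattezza}, $\|f\|_{C^1}\le C_0(n)$ combined with a quantitative Hopf lemma).

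The third step is the interpolation producing the full scale $p\in[1,n+1]$ with the Sobolev conjugate exponent $p^\star$ on the right. Here one uses Hölder's inequality: for $p\le n+1$ write $\int_{\pa\Om}|\mathring A|^p = \int_{\pa\Om}|\mathring A|^p\,|A|^{0}$ and interpolate between the $L^2$-control of a suitable power of $|\mathring A|$ obtained above and the $L^{p^\star}$-norm of $|A|$, matching exponents so that the $|A|$-factor carries exponent $p^\star$ and the $\mathring A$-factor, after using $|\mathring A|\le |A|$ where needed to absorb, yields the stated power of $\big(P(\Om)\eta(\Om)\big)^{1/(n+1)}$; the exponent bookkeeping is routine once one notes $\tfrac1p=\tfrac1{p^\star}+\tfrac1{n+1}$. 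The $p=n+1$ endpoint is the $L^\infty$ case for $|A|$ and is handled directly.

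\textbf{Main obstacle.} I expect the delicate point to be making the passage from the \emph{interior} $L^2$-estimate on $\mathring{\nabla^2 f}$ to a genuinely \emph{boundary} $L^2$-estimate on $|\nabla f|\,\mathring A$ quantitatively clean: $\nabla^2 f$ on $\pa\Om$ involves the full Hessian (tangential, mixed, and normal components) and the identity tying its tangential block to $|\nabla f|\,A$ must be used together with control on the mixed components $\nabla_{\tau\nu} f$ (which vanish after differentiating $\nabla f = |\nabla f|\nu_\Om$ tangentially, using $f\equiv0$ on $\pa\Om$), so that one really is comparing $\mathring A$ against the traceless boundary Hessian. The scaling-dimensional factor $\big(P(\Om)\eta(\Om)\big)^{1/(n+1)}$ signals that a trace inequality or a De Giorgi--Nash--Moser / gradient-estimate ingredient (as in \cite{caffarelligarafolosegala}) is needed to convert the $L^2(\Om)$ bound into an $L^2(\pa\Om)$ bound with the correct power; getting that power right, rather than just some power, is the crux, and it is what forces the appearance of $p^\star$ and the restriction $p\le n+1$ via the companion use of Hölder against $\|A\|_{L^{p^\star}}$.
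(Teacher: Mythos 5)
Your proposal takes a genuinely different route from the paper's, which does \emph{not} go through the torsion potential at all. Instead, the paper's proof of Proposition~\ref{proposition mr} is a quantitative reading of the Montiel--Ros geometric proof of the Heintze--Karcher inequality: one considers the map $g(x,t)=x-t\,\nu_\Om(x)$ on $\Gamma=\{(x,t): 0<t\le 1/\k(x)\}$ (with $\k=\max_i\k_i$), writes $|\Om|\le|g(\Gamma)|$, applies the area formula, the AM--GM inequality to the Jacobian $\prod_i(1-t\k_i)$, and the inequality $1/\k\le n/H$. Each of these steps leaves a nonnegative slack term, and one of those slack terms is precisely $\frac1{n+1}\int_{\pa\Om}\frac{n}{H}\big(1-\frac{H}{n\k}\big)^{n+1}$; since $|\mathring A|\le C(n)\,|\k-H/n|$ and $\de(\Om)\le 1/2$ forces $H\sim H_0$, this slack bounds $\int_{\pa\Om}(|\mathring A|/\k)^{n+1}$ by $C(n)\,P(\Om)\,\eta(\Om)$, and the conclusion follows from a single application of Hölder with $1/p=1/(n+1)+1/p^\star$, using $\k\le|A|$. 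The crucial point is that Montiel--Ros produces the power $n+1$ directly in the pointwise boundary integrand, which is exactly what makes the exponent bookkeeping close and explains the appearance of $(P(\Om)\eta(\Om))^{1/(n+1)}$ and of $p^\star$.

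The gap you flag in your own write-up is in fact fatal for the torsion-potential route as sketched. Estimate \eqref{stima hessiano 1} only gives $L^2(\Om)$ control of $\nabla^2 f-\Id/(n+1)$, and $\pa\Om$ is a null set: an $L^2$-interior bound on a Hessian carries no information about its boundary trace, and none of the tools available here ($L^2$-Reilly, Pohozaev, the $p$-function maximum principle from \cite{caffarelligarafolosegala}, or the uniform $C^1$ bound $\|f\|_{C^1}\le C_0(n)$) upgrades $\nabla^2 f$ to the kind of $W^{1,q}$ regularity that would permit a trace. The auxiliary boundary estimate you do have, \eqref{stima L2 derivata normale}, controls only $|\nabla f|-\frac1{n+1}$ in $L^2(\pa\Om)$; it says nothing about the tangential second derivatives of $f$ along $\pa\Om$, which is exactly what encodes $|\nabla f|\,\mathring A$. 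Moreover, even if one could somehow obtain an $L^2(\pa\Om)$ bound on $\mathring A$ of size $(P(\Om)\eta(\Om))^{\gamma}$, the natural interpolation against $\|A\|_{L^{p^\star}}$ would then be built on an $L^2$-anchor rather than an $L^{n+1}$-anchor, and the resulting relation between $p$ and the conjugate exponent would not match the one in the statement. So the exponent $1/(n+1)$ on $(P(\Om)\eta(\Om))$ and the specific threshold $p\le n+1$ with $p^\star$ are structural features of the Montiel--Ros slack term, not something that an $L^2$-based Reilly estimate can reproduce. Two smaller inaccuracies: the tangential Hessian relation should read $\nabla^2 f(\tau_1,\tau_2)=-|\nabla f|\,A(\tau_1,\tau_2)$ (sign matters, since $\nabla f$ points outward on $\pa\Om$ because $f<0$ inside), and a uniform lower bound $|\nabla f|\ge c(n)$ on $\pa\Om$ under the hypotheses as stated is not established anywhere in the paper and would itself require argument.
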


Before proving Proposition \ref{proposition mr}, let us discuss \eqref{montielros inq} in connection with the above mentioned results for almost-umbilical surfaces. Let us consider
\[
\theta_p(\Om)=\inf_{\l\in\R}\|A-\l\,\Id\|_{L^p(\pa\Om)}\,,\qquad p\ge 1\,,
\]
as a measure of the non-umbilicality of $\pa\Om$. In \cite[Theorem 1.1]{perez} it is shown that if $\Om$ is a bounded connected open set with smooth boundary such that
\[
P(\Om)=P(B)\,,\qquad \|A\|_{L^p(\pa\Om)}\le K\,,
\]
for some $p\in(n,\infty)$, then
\begin{equation}
  \label{perez}
  \inf_{\l\in\R}\|A-\l\,\Id\|_{L^p(\pa\Om)}\le C(n,p,K)\,\|\mathring{A}\|_{L^p(\pa\Om)}\,.
\end{equation}
Moreover, in \cite[Corollary 1.2]{perez} it is shown that for every $\e>0$ there exists $c=c(n,p,K,\e)$ such that if
\begin{equation}
  \label{perez2}
  \|\mathring{A}\|_{L^p(\pa\Om)}\le c\qquad\Rightarrow\qquad \inf_{x\in\R^{n+1}}\hd(\pa\Om,\pa B_{x,1})<\e\,.
\end{equation}
By combining \eqref{montielros inq} and \eqref{perez} with $\eta(\Om)\le\de(\Om)$, we deduce that, if $P(\Om)=P(B)$, $H>0$ and $\de(\Om)\le 1/2$, then
\[
\theta_p(\Om)\le C(n,p,\|A\|_{L^{p^\star}(\pa\Om)})\,\de(\Om)^{1/(n+1)}\,.
\]
If, in addition, $\de(\Om)\le c(n,p,\|A\|_{L^{p^\star}(\pa\Om)},\e)$, then, by \eqref{perez2},
\[
\inf_{x\in\R^{n+1}}\hd(\pa\Om,\pa B_{x,1})<\e\,.
\]
A similar comparison is possible with the results of De Lellis and M\"uller, which pertain the case $n=p=2$. In conclusion, the use of almost-umbilicality in attacking Theorem \ref{thm main 1} does not seem to provide one with a starting point as effective as the one based on the study of the torsion potential discussed in section \ref{section proof of the main theorem}. We finally prove the above proposition.

\begin{proof}[Proof of Proposition \ref{proposition mr}]
  This is consequence of the proof of the Heintze-Karcher inequality by Montiel-Ros \cite{montielros}, which we now recall for the reader's convenience. For each $x\in\pa\Om$ let $\{\k_i(x)\}_{i=1}^n$ be the principal curvatures of $\pa\Om$ at $x$, so that, if we set $\k=\max_{1\le i\le n}\k_i$, then $\k\ge H/n>0$ on $\pa\Om$. Let us consider the set
\[
\Gamma=\Big\{(x,t)\in\pa\Om\times(0,\infty):t\le\frac1{\k(x)}\Big\}\,,
\]
and the function $g:\pa\Om\times(0,\infty)\to\R^{n+1}$
\[
g(x,t)=x-t\,\nu_\Om(x)\,,\qquad (x,t)\in\pa\Om\times(0,\infty)\,.
\]
We claim that $\Om\subset g(\Gamma)$. Indeed, given $y\in\Om$ let $x\in\pa\Om$ be such that $|x-y|=\dist(y,\pa\Om)$. If $\g$ is a curve in $\pa\Om$ with $\g(0)=x$ and $\g'(0)=\tau\in S^n$, then we obtain
\[
(\g-y)\cdot\g'=0\,,\qquad (\g(0)-y)\cdot\g''(0)+\g'(0)^2\ge 0\,.
\]
In particular, there exists $t>0$ such that $x=y-t\,\nu_\Om(y)$, and it must be $1-\k(y)\,t\ge 0$. We now combine $\Om\subset g(\Gamma)$ with the area formula, the arithmetic-geometric mean inequality, and $\k\ge H/n$, to prove the Heintze-Karcher inequality
\begin{eqnarray*}
  |\Om|&\le&|g(\Gamma)|\le\int_{g(\Gamma)}\,\H^0(g^{-1}(y))\,dy=\int_{\Gamma}\,J^{\Gamma}\,g(x,t)\,d\H^n(x)\,dt
  \\
  &=&\int_{\pa\Om}\,d\H^n\,\int_0^{1/\k}\,\prod_{i=1}^n(1-t\,\k_i)\,dt
  \\
  &\le&\int_{\pa\Om}\,d\H^n\,\int_0^{1/\k}\,\Big(\frac1n\sum_{i=1}^n(1-t\,\k_i)\Big)^n\,dt
  =\int_{\pa\Om}\,d\H^n\,\int_0^{1/\k}\,\Big(1-t\,\frac{H}n\Big)^n\,dt
  \\
  &\le&\int_{\pa\Om}\,d\H^n\,\int_0^{n/H}\,\Big(1-t\,\frac{H}n\Big)^n\,dt=\frac{1}{n+1}\int_{\pa\Om}\frac{n}{H}\,.
\end{eqnarray*}
This chain of inequalities implies of course the following identity
\begin{eqnarray}\label{controllo}
  \int_{\pa\Om}\frac{n}H\,d\H^n\,\frac{\eta(\Om)}{n+1}&=&\frac1{n+1}\,\int_{\pa\Om}\,\frac{n}{H}\,\Big(1-\frac1{\k}\,\frac{H}n\Big)^{n+1}\,
  +\int_{\Gamma}\,\mu_A^n-\mu_G^n\,
  \\\nonumber
  &&+\int_{g(\Gamma)}\,\big(\H^0(g^{-1}(y))-1\big)\,dy+\big(|g(\Gamma)|-|\Om|\big)\,,
\end{eqnarray}
where for every $(x,t)\in\Gamma$ we have set $\mu_i(x,t)=1-t\,\k_i(x)$ (note that $\mu_i\ge0$ on $\Gamma$) and
\[
\mu_A=\frac1{n}\sum_{i=1}^n\mu_i\,,\qquad \mu_G=\prod_{i=1}^n\mu_i^{1/n}\,.
\]
The first two terms on the right-hand side of \eqref{controllo} provide some control on $\mathring{A}$. Since $|\mathring{A}(x)|\le C(n)\,|\k(x)-(H(x)/n)|$ for every $x\in\pa\Om$, by looking at the first term, we get
\[
C(n)\,\eta(\Om)\,\int_{\pa\Om}\frac{n}H\ge\,\int_{\pa\Om}\frac{n}{H}\,\Big(\frac{|\mathring{A}|}{\k}\Big)^{n+1}\,,
\]
that is, by exploiting $\de(\Om)\le 1/2$ to infer $H_0/2\le H(x)\le 2H_0$ for every $x\in\pa\Om$,
\[
C(n)\,P(\Om)\,\eta(\Om)\ge\,\int_{\pa\Om}\Big(\frac{|\mathring{A}|}{\k}\Big)^{n+1}\,,
\]
We thus conclude the proof by H\"older inequality.
\end{proof}

\bibliography{references}
\bibliographystyle{is-alpha}
\end{document}